\newcommand{\mylabel}[2]{#2\def\@currentlabel{#2}\label{#1}}
\newcommand{\Q}{\mathbb{Q}}
\newcommand{\Z}{\mathbb{Z}}
    \newcommand{\BA}{{\mathbb {A}}} 
    \newcommand{\BC}{{\mathbb {C}}}
    \newcommand{\BQ}{{\mathbb {Q}}} \newcommand{\BR}{{\mathbb {R}}}
     \newcommand{\BZ}{{\mathbb {Z}}}
    \newcommand{\CO}{{\mathcal {O}}}
 \newcommand{\SF}{{\mathscr {F}}}
    \newcommand{\fm}{{\mathfrak{m}}} 
     \newcommand{\fp}{{\mathfrak{p}}}
     \newcommand{\fD}{{\mathfrak{D}}}
    \newcommand{\cond}{\mathrm{cond^r}}
    \newcommand{\Gal}{{\mathrm{Gal}}} \newcommand{\GL}{{\mathrm{GL}}}
    \newcommand{\Hom}{{\mathrm{Hom}}}
    \newcommand{\ord}{{\mathrm{ord}}}
    \renewcommand{\mod}{\ \mathrm{mod}\ }
    \newcommand{\ov}{\overline}
    \theoremstyle{plain}
    \newtheorem{thm}{Theorem}[section] \newtheorem{cor}[thm]{Corollary}
    \newtheorem{lem}[thm]{Lemma}  \newtheorem{prop}[thm]{Proposition}
     \newtheorem{defn}[thm]{Definition}
\theoremstyle{remark} \newtheorem{remark}[thm]{Remark}
\theoremstyle{remark} 
\theoremstyle{remark} 
    \newcommand{\cO}{\mathcal O}
    \numberwithin{equation}{section}
\begin{document}
\title{The $p$-adic valuation of local resolvents, 
generalized Gauss sums and anticyclotomic Hecke $L$-values
of imaginary quadratic fields at inert primes}

\author{Ashay A. Burungale, Shinichi Kobayashi and Kazuto Ota}

\address{Ashay A. Burungale:  Department of Mathematics, The university of Texas at Austin,
2515 Speedway, Austin TX 78712} 
\email{ashayburungale@gmail.com}

\address{Shinichi Kobayashi: Faculty of Mathematics,
Kyushu University, 744, Motooka, Nishi-ku, Fukuoka, 819-0395, Japan.}
\email{kobayashi@math.kyushu-u.ac.jp}

\address{Kazuto Ota: \textsc{Department of Mathematics, Graduate School of Science, Osaka University Toyonaka, Osaka 560-0043, Japan}} 
\email{
kazutoota@math.sci.osaka-u.ac.jp}
\maketitle
\begin{abstract} 
We 
prove an asymptotic formula for
the $p$-adic valuation of 
Hecke $L$-values of an imaginary quadratic field at an inert prime $p$ along the anticyclotomic $\BZ_p$-tower. 
The key is determination of the $p$-adic valuation of generalized Gauss sums 
defined using Coates-Wiles homomorphism, 
 and  
of local resolvents in $\BZ_p$-extensions. This answers a question of Rubin. 
\end{abstract}

\tableofcontents
\section{Introduction}\label{Int}

$L$-functions bear an affinity to arithmetic. 
The $p$-adic valuation of a (normalized) $L$-value conjecturally encodes the size of Bloch-Kato Selmer group and Tate--Shafarevich group, invariants of the associated $p$-adic Galois representation. 
The $p$-divisibility properties of 
 $L$-values in a $p$-adic family 
 of motives 
is elemental to the arithmetic nature of $L$-values
 and 
 Iwasawa theory.
They reflect the underlying global arithmetic as well as local Perrin-Riou theory of the exponential map for the family, 
the latter mirroring 
variation of the integral structure of Bloch--Kato local subgroups over the family. 
When $p$ is a prime of ordinary reduction, general principles of Iwasawa theory predict a systematic variation of 
the $p$-part  of $L$-values. 
On the other hand, non-ordinary primes are still not well-understood, and 
the conjectural framework excludes basic examples such as anticyclotomic deformation of a CM elliptic curve at inert primes. 

In this paper we determine the $p$-adic valuation of 
central $L$-values of anticyclotomic deformation of a self-dual Hecke character of an imaginary quadratic field at an inert prime $p$ (cf.~Theorem \ref{theorem, intro main}). 
The investigation was first suggested by Rubin \cite{Ru} in the late 80's when he
proposed a framework for anticyclotomic CM Iwasawa theory at inert primes and made a 
conjecture on the structure of local units along a twist of the anticyclotomic direction.  
The recent proof of Rubin's conjecture \cite{BKO} has initiated progress towards the anticyclotomic CM Iwasawa theory (cf.~\cite{BKOb},~\cite{BKOs},~\cite{BKOY}), of which this work is a continuation.

Let $K$ be an imaginary quadratic field and $\eta_K$ the associated quadratic character of $\BQ$. Let $\varphi$ be a conjugate self-dual symplectic Hecke character of $K$ of infinity type $(1,0)$, that is,  
$$
  \varphi_\infty(z)=z^{-1} \text{ and $\varphi^*:=\varphi |\cdot|_{\BA_K^\times}^{1/2}$ satisfies {$\varphi^*|_{\BA^\times}=\eta_{K}$, }}
$$
where $\varphi_{\infty}: (K\otimes_{\BQ}\BR)^{\times} \to \BC^{\times}$ is the component of $\varphi$ at the infinite place and we regard  $K\otimes_{\BQ}\BR=\BC$, fixing an embedding $K\hookrightarrow {\BC}.$ 
Let $p$ be an odd prime inert in $K$. 
Let $K_\infty$ be the anticyclotomic $\BZ_p$-extension of $K$. 
We consider 
finite order Hecke characters $\chi$ of $K$ factoring through $K_\infty/K$. 
For a CM period $\Omega$ of $K$, the 
$L$-value $$\frac{L(\varphi \chi, 1)}{\Omega}$$ is algebraic and 
a basic question is to study its $p$-adic valuation under a fixed embedding 
$\overline{\Q} \hookrightarrow \mathbb{C}_p$. 
Let $v_{p}$ be the $p$-adic valuation of $\BC_p$ normalized as $v_{p}(p)=1$.
In the inert case, Greenberg \cite{Gr} found the interesting root number formula 
$$W(\varphi \chi)=W(\varphi)\cdot (-1)^{n-1}$$ 
for 
$\chi$
a finite anticyclotomic Hecke character of order $p^n>1$. 
In particular, if $n$ satisfies $(-1)^n=W(\varphi)$, then
$L(\varphi\chi, 1)=0$. 
To consider $v_{p}(\frac{L(\varphi\chi,1)}{\Omega})$, 
one may thus assume $(-1)^{n-1}=W(\varphi)$.

\subsubsection*{Results}
Our main result is the following.
\begin{thm}\label{theorem, intro main}
Let $E$ be a CM elliptic curve defined over $\BQ$ of conductor $N$ 
and $\varphi_{E}$ the associated Hecke character of the CM field $K$. 
Let $p\nmid 6N$ be a prime that is inert in $K$. 
Then there exist non-negative integers $\lambda$ and $\mu$ such that for any sufficiently large $n$ 
with $\varepsilon:=W(\varphi_{E})=(-1)^{n-1}$, 
we have 
\begin{equation}\label{equation, intro main}
v_p\left(\frac{L(\varphi_{E}\chi, 1)}{\Omega}\right)=\frac{\lambda}{p^{n-1}(p-1)}+\mu-\frac{n+1}{2}+
\frac{1}{p^{n-1}(p-1)}\left(\frac{1-\varepsilon}{2}+\sum_{k\equiv n-1\!\!\mod \!2 } (p^k-p^{k-1})\right)
\end{equation}
where $\chi$ is an anticyclotomic character of order $p^n$ 
 and 
the index $k$ runs through integers  $1 \leq k \leq n-1$ with the same parity as 
$n-1$.  

Moreover, if $p\nmid \frac{L(E_{/\BQ}, 1)}{\Omega}$, then 
\[
v_p\left(\frac{L(\varphi_{E}\chi, 1)}{\Omega}\right)=-\frac{n+1}{2}+
\frac{1}{p^{n-1}(p-1)}\sum_{k=1}^{\frac{n-1}{2}} (p^{2k}-p^{2k-1})
\] 
order $p^n$. 
\end{thm}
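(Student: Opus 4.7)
The plan is to express $L(\varphi_E\chi,1)/\Omega$ via a $p$-adic interpolation formula for an anticyclotomic $p$-adic $L$-function $\mathcal{L}_p$ in the Rubin framework, isolate the local factor $C(\chi)$ at the inert prime $p$, control its valuation using the main technical theorems of this paper, and finally apply Weierstrass preparation to $\mathcal{L}_p$.

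Concretely, one would construct $\mathcal{L}_p$ from elliptic units via the Coates--Wiles homomorphism as in the earlier BKO series, obtaining an interpolation formula of the shape
$$\mathcal{L}_p(\chi) \;=\; C(\chi)\cdot\frac{L(\varphi_E\chi,1)}{\Omega},$$
where $C(\chi)$ factors as a generalized Gauss sum times a local resolvent in the anticyclotomic $\BZ_p$-tower, and is non-zero under the hypothesis $\varepsilon=(-1)^{n-1}$. The main theorems of the paper then supply
$$v_p\bigl(C(\chi)\bigr) \;=\; \frac{n+1}{2}-\frac{1}{p^{n-1}(p-1)}\left(\frac{1-\varepsilon}{2}+\sum_{\substack{1\le k\le n-1\\ k\equiv n-1\!\!\mod\! 2}}(p^k-p^{k-1})\right),$$
the term $(n+1)/2$ coming from the Gauss sum and the sum from the resolvent contribution. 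Applying Weierstrass preparation $\mathcal{L}_p = p^\mu U F$ with $F$ distinguished of degree $\lambda$ in the appropriate Iwasawa algebra, for $n$ sufficiently large all roots of $F$ lie outside the disc of radius $p^{-1/(p^{n-1}(p-1))}$, which is the valuation of $\chi(\gamma)-1$ for a topological generator $\gamma$ of $\Gamma=\Gal(K_\infty/K)$. This yields $v_p(\mathcal{L}_p(\chi))=\mu+\lambda/(p^{n-1}(p-1))$, and subtracting $v_p(C(\chi))$ produces formula \eqref{equation, intro main}.

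For the supplementary statement, the hypothesis $p\nmid L(E_{/\BQ},1)/\Omega$ forces $\mathcal{L}_p$ to be a $p$-adic unit at the trivial character (the relevant $p$-Euler factor is a $p$-unit because $a_p(E)=0$ at the inert prime $p$), which is equivalent to $\lambda=\mu=0$; moreover $L(E_{/\BQ},1)\neq 0$ imposes $\varepsilon=+1$, so $(1-\varepsilon)/2=0$ and only odd $n$ appear, yielding the cleaner formula. The principal obstacle is the $v_p$-computation of $C(\chi)$: in the split (ordinary) case the height-one formal group reduces this to essentially classical Coates--Wiles--Perrin-Riou theory, whereas in the inert case $\wh{E}$ has height two and the generalized Gauss sums and local resolvents interact intricately with the ramification filtration of the anticyclotomic $\BZ_p$-tower---this is the core technical content developed in the earlier sections of the paper.
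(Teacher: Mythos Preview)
Your overall architecture is exactly the paper's: Rubin's anticyclotomic $p$-adic $L$-function $\mathscr{L}_p$ gives the interpolation $\mathscr{L}_p(\chi)=\delta_{\chi^{-1}}(v_\varepsilon)^{-1}\cdot L(\overline{\varphi\chi},1)/\Omega$, Theorem~\ref{theorem, delta valuation} supplies the valuation of the local period, and Weierstrass preparation for $\mathscr{L}_p$ yields the $\lambda,\mu$ contribution; the supplement follows since $\varepsilon=+1$ and $\mathscr{L}_p(\mathds{1})$ a unit forces $\lambda=\mu=0$.

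One point of confusion worth correcting: the interpolation factor $C(\chi)$ is \emph{not} a product of a generalized Gauss sum and a local resolvent---it is simply $1/\delta_{\chi^{-1}}(v_\varepsilon)$, the reciprocal of the generalized Gauss sum alone. The local resolvent enters only in the \emph{computation} of $v_p(\delta_\chi(v_\varepsilon))$, via the key identity
\[
\langle \lambda(c_n^{-\varepsilon})\mid\chi^{-1}\rangle\cdot \delta_\chi(v_\varepsilon)=\omega_n^{\varepsilon}(\chi(\gamma))
\]
(equation~\eqref{delta-log}). Consequently your attribution is inverted: the term $(n+1)/2$ comes from the local resolvent $\langle\,\cdot\mid\chi\rangle$ (Theorem~\ref{thm, lr}), while the sum $\sum(p^k-p^{k-1})/(p^{n-1}(p-1))$ comes from the valuation of $\omega_n^{\varepsilon}(\chi(\gamma))=\prod_k\Phi_{p^k}(\chi(\gamma))$, not the other way around. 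This does not affect the final formula you stated for $v_p(C(\chi))$, which is correct, but it misrepresents where the work in the paper actually lies.
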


The main text considers more general self-dual Hecke characters $\varphi$ (cf.~Theorem~\ref{theorem, main L-value}).

Our formula \eqref{equation, intro main} 
is
essentially 
the same as 
Pollack's formula \cite{Po}
for the $p$-adic valuation of $L$-values of 
the cyclotomic deformation of an elliptic curve over $\BQ$ at a good supersingular prime $p$ (cf.~\cite{Nasy}, ~\cite[Prop.~6.9]{Po}).     
However, the arithmetic behind the formulas is very different. First, unlike the cyclotomic deformation, the anticyclotomic deformation is self-dual, accordingly Theorem \ref{theorem, intro main} concerns $\chi$ of $p$-power order of a fixed parity while the results of \cite{Po} 
apply to 
any finite order $\chi$.
In {\it{loc. cit.}}
the contribution of even/odd growth factor 
on the right-hand side is related 
to the Tate-Shafarevich group, 
whereas in 
the anticyclotomic case 
it comes from 
the Mordell-Weil group 
(cf.~\cite{AH0},~\cite{BKOb},~\cite{BKOs},
~\cite{Ko0}).
For the cyclotomic deformation, 
the summand $\frac{n+1}{2}$ on the right-hand side corresponds to the $p$-adic valuation of the Gauss sum $\tau(\chi)$. 
On the other hand, in our case
it is linked with 
a local resolvent (cf.~Theorem~\ref{thm, lr}) and 
 a generalized Gauss sum \eqref{delta-chi} defined 
by evaluation of Coates-Wiles logarithmic derivative at local units in the self-dual direction 
(cf.~Theorem~\ref{theorem, delta valuation}).

An application of the proof of Theorem \ref{theorem, intro main} and the main result of \cite{BKOb} is the following (cf.~Corollary~\ref{cor, main L-value}).

\begin{thm}\label{thm, intro main application}
Let $E$ be a CM elliptic curve defined over $\BQ$ of conductor $N$ 
and $\varphi_{E}$ the associated Hecke character of the CM field $K$. 
Let $p\nmid 6N$ be a prime that is inert in $K$.
Suppose that the root number of $E$ over $\BQ$ is $-1$. 
\begin{itemize}
\item[i)] We have  
\[
v_p\left(\frac{L(\varphi_E\chi, 1)}{\Omega}\right)\geq -\frac{3}{2}+\frac{1}{p-1}
\]
for any 
anticyclotomic 
character $\chi$ of $K$ of order $p^2$. 
(Note that $W(\varphi_E \chi)=+1$.) 
\item[ii)] If the equality holds in i) for some $\chi$  of order $p^2$, 
then 
$$\mathrm{ord}_{s=1} L(E_{/\Q}, s)=1.$$ 
In particular, the Tate-Shafarevich group of $E_{/\Q}$ is finite and the Mordell-Weil rank of $E(\Q)$ is $1$.  
\item[iii)] Conversely, suppose that  $\mathrm{ord}_{s=1} L(E_{/\Q}, s)=1$.  
Suppose also that $E(\Q)$ is dense in $E(\Q_p)\otimes_{\BZ} \BZ_{(p)}$, i.e. $E(\Q)\not\subset pE(\Q_p)\otimes_{\BZ} \BZ_{(p)}$, and 
$$\frac{L'(E_{/\Q}, 1)}{\Omega \cdot \mathrm{Reg}_E}$$ is a $p$-adic unit.
 Then the equality holds in i). 
In fact   
(\ref{equation, intro main}) holds with $\lambda=\mu=0$  
for all non-trivial $\chi$ of even $p$-power order.
\end{itemize}
\end{thm}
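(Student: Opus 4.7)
The plan is to derive Theorem \ref{thm, intro main application} from formula \eqref{equation, intro main} evaluated at $n=2$, combined with the arithmetic interpretation of the Iwasawa invariants $\lambda$ and $\mu$ furnished by the main result of \cite{BKOb}.

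For i), the hypothesis $W(E_{/\Q})=-1$ translates to $\varepsilon=W(\varphi_E)=-1$, so the parity constraint $\varepsilon=(-1)^{n-1}$ is satisfied precisely by even $n$; the smallest relevant case is $n=2$. Applying \eqref{equation, intro main} at $n=2$: the index $k$ ranges over odd integers with $1\leq k\leq 1$, yielding only $k=1$ with contribution $p-1$, while $\frac{1-\varepsilon}{2}=1$. Hence the bracket equals $p$, and dividing by $p^{n-1}(p-1)=p(p-1)$ gives $\frac{1}{p-1}$. Combined with $-\frac{n+1}{2}=-\frac{3}{2}$ and the nonnegative contribution $\frac{\lambda}{p(p-1)}+\mu$, the inequality of i) follows at once, with equality if and only if $\lambda=\mu=0$.

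For ii), equality in i) forces $\lambda=\mu=0$. By the main theorem of \cite{BKOb}, this vanishing corresponds to a structural constraint on the signed anticyclotomic Selmer group attached to $\varphi_E$ which, in the presence of the sign $W(E_{/\Q})=-1$, forces $\mathrm{ord}_{s=1}L(E_{/\Q},s)=1$; the finiteness of the Tate--Shafarevich group and $\mathrm{rank}\, E(\Q)=1$ are then immediate from Gross--Zagier and Kolyvagin. For iii), the analytic non-vanishing of $L'(E_{/\Q},1)/(\Omega\cdot\mathrm{Reg}_E)$ modulo $p$ together with the density of $E(\Q)$ in $E(\Q_p)\otimes_{\BZ}\BZ_{(p)}$ are precisely the hypotheses under which \cite{BKOb} delivers the Iwasawa-theoretic vanishing $\lambda=\mu=0$. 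Re-inserting $\lambda=\mu=0$ into \eqref{equation, intro main} for each even $n\geq 2$ yields the announced closed formula for $v_p(L(\varphi_E\chi,1)/\Omega)$ for all nontrivial $\chi$ of even $p$-power order, including the equality in i) at $n=2$.

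The main obstacle is the arithmetic-to-Iwasawa translation used in ii)--iii): one must pass between the analytic invariants $\lambda,\mu$ encoded in \eqref{equation, intro main} and the classical invariants of $E_{/\Q}$ (Mordell--Weil rank, regulator, Tate--Shafarevich group, and the local $p$-density). This passage is exactly the content of the main theorem of \cite{BKOb}, invoked here as a black box. Part i), by contrast, is a direct specialization of \eqref{equation, intro main} once Theorem \ref{theorem, intro main} is in hand.
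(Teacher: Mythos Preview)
Your approach matches the paper's, but there is a gap in part i). You apply \eqref{equation, intro main} at $n=2$, yet Theorem~\ref{theorem, intro main} asserts \eqref{equation, intro main} only for \emph{sufficiently large} $n$; the equality $v_p(\mathscr{L}_E(\chi))=\mu+\tfrac{\lambda}{p^{n-1}(p-1)}$ can genuinely fail for small $n$, so neither the inequality nor the biconditional ``equality $\Leftrightarrow\lambda=\mu=0$'' follows from the statement alone. The paper sidesteps this by appealing to the \emph{proof} rather than the statement: the interpolation formula (Theorem~\ref{theorem, Rubin-pL}) and the exact valuation of $\delta_{\chi}(v_\varepsilon)$ (Theorem~\ref{theorem, delta valuation}) are valid for every $n\geq 1$, and since $\mathscr{L}_E\in\Lambda$ one always has $v_p(\mathscr{L}_E(\chi))\geq 0$. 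Your computation of the $\delta$-term at $n=2$ is correct, so this repair recovers the inequality. The biconditional is also right, but via a different route: equality forces $\mathscr{L}_E(\chi)$ to be a $p$-adic unit, and since $\chi(\gamma)-1$ lies in the maximal ideal this forces the constant term of $\mathscr{L}_E$ to be a unit, i.e.\ $\mathscr{L}_E\in\Lambda^\times$.

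For ii)--iii) the paper uses the $p$-adic Beilinson formula of \cite{BKOb} (stated here as Theorem~\ref{ThmA}), which expresses $\mathscr{L}_E(\mathds{1})$ via the logarithm of a rational point $P\in E(\BQ)$: from $\mathscr{L}_E\in\Lambda^\times$ one gets $\mathscr{L}_E(\mathds{1})\neq 0$, hence $P$ is non-torsion, hence $\ord_{s=1}L(E_{/\BQ},s)=1$, and conversely under the hypotheses of iii). Your description of this input as ``a structural constraint on the signed anticyclotomic Selmer group'' mischaracterizes the cited result---it is a special-value formula, not a Selmer-theoretic statement---though since you invoke it as a black box the logical structure of your argument survives.
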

\begin{remark}
\begin{itemize}
\noindent
\item[\tiny{$\bullet$}] In the sequel \cite{BHKO}, we prove that the invariant $\mu$ appearing in \eqref{equation, intro main} vanishes. 
\item[\tiny{$\bullet$}] For primes $p$ split in $K$, an analogue of Theorem \ref{theorem, intro main} goes back to Katz \cite{Ka}, and of Theorem \ref{thm, intro main application} to Rubin~\cite{Ru1}.
\item[\tiny{$\bullet$}] The companion paper \cite{BKOs} considers variation of the associated Tate--Shafarevich groups (cf.~\cite{BF}). 
\item[\tiny{$\bullet$}] Finis studied
the $p$-adic valuation of Hecke $L$-values of an imaginary quadratic field in anticyclotomic families  (cf.~\cite{Fin},~\cite{Fin1},~\cite{Hi}).  
When $p$ splits, he determined the $p$-adic valuation 
for generic Hecke characters, however, 
for inert $p$ 
his results only apply to  
Hecke characters 
of  infinity type $(1,0)$ and conductor 
prime to $p$. 
The above results 
treat a complementary case (see also \cite{BHKO}). 
\end{itemize}

\end{remark}

\subsubsection*{About the proof}
We approach Theorem \ref{theorem, intro main}
as follows. 

A salient feature of Rubin's supersingular Iwasawa theory is the existence of a bounded 
$p$-adic $L$-function $\mathscr{L}_{p, \varphi}$ in  
the Iwasawa algebra $\mathcal{O}[\![\mathrm{Gal}(K_\infty/K)]\!]$. It depends on choice of a basis $v_\varepsilon$ of the module $V_\infty^{*, \varepsilon}$ of twisted local units \eqref{def:loc} in the anticyclotomic $\BZ_p$-extension $\Psi_\infty$ of the unramified quadratic extension $\Phi$ of $\BQ_p$, 
where $\varepsilon$ denotes the sign of $W(\varphi)$. 
The underlying $L$-values are interpolated 
as 
\begin{equation}\label{int}
\text{
$\mathscr{L}_{p, \varphi}(\chi)
=\frac{1}{\delta_{\chi^{-1}}(v_\varepsilon)}\cdot \frac{L(\overline{\varphi\chi}, 1)}{\Omega}$ 
}
\end{equation}
for anticyclotomic characters $\chi$ of order $p^n>1$ 
satisfying $(-1)^{n-1}=W(\varphi)$ 
(cf.~\cite{BKO}, ~\cite{Ru}). 
Here 
$\delta_\chi(v_\varepsilon)$ is a mysterious $p$-adic period factor \eqref{delta-chi}
analogous to the Gauss sum in the cyclotomic case, defined via 
Coates-Wiles homomorphism (or the dual exponential map). 
The $p$-adic valuation of $\mathscr{L}_{p, \varphi}(\chi)$ is controlled by the $\lambda$- and $\mu$-invariants of 
$\mathscr{L}_{p, \varphi}$. 
Hence 
it suffices to determine the valuation of $\delta_{\chi}(v_{\varepsilon})$. 
This local problem
was first suggested by Rubin \cite[pp.~421]{Ru}.

The $p$-adic period $\delta_\chi(v_\varepsilon)$ seems opaque. 
Its non-vanishing, being implicit in \eqref{int}, 
relies on Rubin's conjecture, 
which asserts the decomposition of twisted local units along $\Psi_\infty$ such that 
$V_\infty^{*}=V_\infty^{*,+}\oplus V_\infty^{*, -}$.  (cf. Theorem \ref{rubin-conj}). 
To study its valuation, we first 
build on the proof of Rubin's conjecture, 
leading to a system of local points ancillary to the underlying supersingular Iwasawa theory (cf.~Section~\ref{section: coates-wiles}). 
Then using the system, 
we relate 
 the valuation of $\delta_\chi(v_\varepsilon)$ to that of 
a Gauss-like sum
\[
\langle \alpha|\chi \rangle=\sum_{\sigma \in \mathrm{Gal}(\Psi_n/\Phi)} \chi(\sigma) \alpha^\sigma 
\]
for $\Psi_n$ the $n$-th layer of $\Psi_\infty$
and 
$\alpha \in \mathcal{O}_{\Psi_n}$ (cf.~Sections~\ref{ram} and~\ref{section: coates-wiles}). 
The even/ odd growth factor in \eqref{equation, intro main} originates from this connection between the valuation of $\delta_{\chi}(v_{\varepsilon})$ and $\langle \chi| \alpha \rangle$ (see also \eqref{delta-log}). The connection is indicative of a Perrin-Riou and Mellin transform theory along $\Psi_\infty$. 

The invariant 
$\langle \alpha |\chi \rangle$
is a 
primary  object in Galois module theory, often referred to as the local resolvent  (cf.~\cite{Fro}, ~\cite{FT}). 
Unlike the Gauss sum, it is 
inexplicit in general. 
An insight of this paper is 
its link with ramification theory leading to: 
\begin{thm}\label{thm, lr}
We have 
$$ v_{p}(\langle \alpha|\chi \rangle)
\geq \frac{n+1}{2}$$ for any 
character $\chi$ of $\Gal(\Psi_n/\Psi)$ of order $p^n>1$ and 
$\alpha \in \cO_{\Psi_{n}}$. 
Moreover, 
the equality holds 
if $\alpha$ is a uniformizer. 
\end{thm}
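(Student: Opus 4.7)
I would reduce the problem to a trace calculation via the bilinear identity
$$\langle \alpha | \chi \rangle \cdot \langle \alpha | \chi^{-1} \rangle \;=\; \sum_{\rho \in G_n} \chi(\rho)\, \tr_{\Psi_n/\Phi}(\alpha \cdot \alpha^\rho),$$
where $G_n := \Gal(\Psi_n/\Phi)$, combined with the $p$-adic Galois symmetry $v_p(\langle\alpha|\chi\rangle) = v_p(\langle\alpha|\chi^{-1}\rangle)$ (applying the automorphism $\zeta \mapsto \zeta^{-1}$ on $\mu_{p^n}$). The right-hand side is controlled by the ramification filtration of $\Psi_n/\Phi$ together with the conductor of $\chi$, so everything hinges on having this ramification data in hand.

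The first input is the ramification filtration. By local class field theory, the anticyclotomic $\BZ_p$-extension at inert $p$ corresponds to $\ker(N_{\Phi/\BQ_p}) \subset \Phi^\times$. A direct computation using that $\Phi^\times/\BQ_p^\times$ is, modulo prime-to-$p$ torsion, a $\BZ_p$ whose filtration by $(U^k_\Phi \cdot U^1_{\BQ_p})/U^1_{\BQ_p}$ has successive quotients of order $p$, shows that a primitive character $\chi$ of order $p^n$ has conductor $\fp_\Phi^{n+1}$. Equivalently, the upper-numbered ramification jumps of $\Gal(\Psi_\infty/\Phi)$ occur at $u = 1, 2, 3, \ldots$, and by Herbrand's function the lower jumps of $G_n$ lie at $b_k = (p^k-1)/(p-1)$; in particular for $\rho$ of lower ramification index $b_k$, one has $\pi_n^\rho - \pi_n \in \fp_{\Psi_n}^{b_k+1}$. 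The conductor--discriminant formula then pins down the different exponent $d_n := v_{\Psi_n}(\fd_{\Psi_n/\Phi})$.

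For the lower bound, write $\alpha = \sum_{i=0}^{p^n-1} a_i \pi_n^i$ with $a_i \in \cO_\Phi$ (using that $\Psi_n/\Phi$ is totally ramified so $\cO_{\Psi_n} = \cO_\Phi[\pi_n]$). By $\cO_\Phi$-linearity and the orthogonality $\langle 1|\chi\rangle = 0$, matters reduce to $v_p(\langle \pi_n^i|\chi\rangle) \geq (n+1)/2$ for $i \geq 1$. In the bilinear identity, I exploit $\sum_\rho \chi(\rho) = 0$ to cancel the $\rho$-independent term $\tr(\pi_n^{2i})$; the residual sum is then estimated \emph{filtration-by-filtration}, with the expansion $\pi_n^\rho = \pi_n + O(\pi_n^{b_k+1})$ on $\Gal(\Psi_n/\Psi_{k-1}) \setminus \Gal(\Psi_n/\Psi_k)$ combined with the vanishing of $\chi$ on the $n$-th layer of the filtration producing successive cancellations. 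Combining this iteration with Serre's estimate $\tr_{\Psi_n/\Phi}(\fp_{\Psi_n}^m) \subset \fp_\Phi^{\lfloor(m+d_n)/p^n\rfloor}$ gives $v_\Phi \geq n+1$ for the character sum, hence the desired bound. Equality when $\alpha$ is a uniformizer would follow by choosing $\pi_n$ in a norm-compatible family from the relative Lubin--Tate description of the anticyclotomic tower, making $\sigma(\pi_n)$ explicit in the formal group law; carrying out the leading-order analysis sharply yields $v_p(\langle \pi_n|\chi\rangle) = (n+1)/2$, together with strict inequality $v_p(\langle \pi_n^i|\chi\rangle) > (n+1)/2$ for $i \geq 2$, so that for any uniformizer $\alpha$ the $i=1$ term dominates.

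The main obstacle is this iterative character-sum argument: the naive Serre trace bound is not sharp enough already for $n \geq 2$, falling short of the required $n+1$. One must carefully track how the cancellations from $\sum_\rho\chi(\rho) = 0$ propagate through the successive layers $\Gal(\Psi_n/\Psi_{k-1})/\Gal(\Psi_n/\Psi_k)$ of the ramification filtration and interact with the fact that $\chi$ is trivial on precisely the $n$-th such subgroup. This reflects a structural feature of the inert anticyclotomic tower with no direct analogue in the standard Lubin--Tate (cyclotomic) setting, and is where the explicit formal-group description at inert primes does the crucial work.
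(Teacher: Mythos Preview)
Your framework for the lower bound---the bilinear identity, the symmetry $v_p(\langle\alpha|\chi\rangle)=v_p(\langle\alpha|\chi^{-1}\rangle)$ via an automorphism $\iota\in\Gal(\ov{\BQ}_p/\Phi)$ inverting $p$-power roots of unity, and ramification input---matches the paper's. But the trace estimate is a one-step argument, not the layerwise iteration you describe. Writing $G_n=S\cdot\Gal(\Psi_n/\Psi_{n-1})$ with $S$ a transversal, one checks directly that $\tr_{\Psi_n/\Phi}\bigl((\alpha^{\sigma\tau}-\alpha^\sigma)\beta\bigr)\in p^{n+1}\cO_\Phi$ for every $\sigma\in S$, $\tau\in\Gal(\Psi_n/\Psi_{n-1})$, and $\beta\in\cO_{\Psi_n}$: the lower-numbering jump gives $\alpha^\tau-\alpha\in\fm_{\Psi_1}\cO_{\Psi_n}$, and each relative trace $\tr_{\Psi_{i+1}/\Psi_i}$ in the tower contributes a factor of $p$. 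Hence the full character sum is congruent modulo $p^{n+1}$ to $\bigl(\sum_{\sigma\in S}\tr(\alpha^\sigma\beta)\chi(\sigma)\bigr)\cdot\sum_{\tau}\chi(\tau)$, and the second factor vanishes since $\chi$ is nontrivial on $\Gal(\Psi_n/\Psi_{n-1})$. Only the single top upper-ramification layer is used; no iteration through intermediate layers is needed.

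For the equality your plan has a real gap. The anticyclotomic $\BZ_p$-tower is a sub-extension of the Lubin--Tate $\cO^\times$-tower, not itself a Lubin--Tate tower, and there is no explicit formal-group parametrization of uniformizers of $\Psi_n$ with computable Galois action of the sort your leading-order analysis would require. The paper's argument is entirely different and uses no formal group: one shows that for any $\alpha$ with $v_{\Psi_n}(\alpha)<p$ there exists $\beta\in\cO_{\Psi_n}$ with $v_p\bigl(\langle\alpha|\chi\rangle\langle\beta|\chi^{-1}\rangle\bigr)$ \emph{exactly} $n+1$; combined with the lower bound applied to $\beta$ this forces $v_p(\langle\alpha|\chi\rangle)=\tfrac{n+1}{2}$. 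The element $\beta$ is produced by linear algebra on the residue-field vector space $V=\cO_{\Psi_n}/\fm_{\Psi_1}\cO_{\Psi_n}$: the pairing $(x,y)\mapsto p^{-n}\tr_{\Psi_n/\Phi}(xy)\bmod p$ on $V$ is nondegenerate, and the nilpotent endomorphism $N=\gamma-1$ of $V$ satisfies $N^{p^{n-1}-1}\alpha\neq 0$ while $N^{p^{n-1}}\alpha=0$, so the translates $\alpha,\alpha^\gamma,\ldots,\alpha^{\gamma^{p^{n-1}-1}}$ are linearly independent in $V$ and one may take $\beta$ dual to the functional singling out $\alpha$. The input throughout is only the ramification jump sequence; this is why the result holds for an arbitrary totally ramified $\BZ_p$-extension of an unramified base, not just the anticyclotomic one.
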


The answer to Rubin's question is then given by 
Theorem \ref{theorem, delta valuation}, and the proof concludes.

\subsubsection*{Vistas} 
The local resolvent $\langle \alpha |\chi \rangle$, the projector to $\chi$-part, is a basic 
object, and its valuation 
is of interest in broad context. A natural question is to link Theorem \ref{thm, lr} and the generalized Gauss sum $\delta_{\chi}(v_{\varepsilon})$ to Galois module theory  
 (cf.~\cite{Br}).

Anticyclotomic Iwasawa theory at 
inert primes 
complements the conjectural backdrop of global as well as local Iwasawa theory. 
Several of the foundational results in local Iwasawa theory are obtained by concrete calculations involving 
an explicit system of uniformizers along the underlying Iwasawa extension, such as the cyclotomic $\BZ_p$-extension. 
For example, Perrin-Riou theory and $(\varphi,\Gamma)$-theory for the cyclotomic deformation essentially rely on the system of cyclotomic units. 
Our study suggests that ramification theory may hold the key to replacing such explicit calculations. 
It was also employed in Tate's seminal work on $p$-divisible groups, leading to the notion of Tate trace, which is ancillary to the $(\varphi,\Gamma)$-theory (cf.~\cite{CG}, ~\cite{Ta}).

\subsubsection*{Acknowledgements}
We thank Mahesh Kakde, Shilin Lai,  
Georgios Pappas 
and Christopher Skinner for helpful discussions. We also thank the referee for instructive suggestions. 
 
The authors are grateful to Karl Rubin for his inspirational question.

This work was partially supported by the NSF grants 
DMS 2001409 and 2302064, 
and the JSPS KAKENHI grants  JP17H02836, JP18H05233, JP22H00096,  
JP17K14173, JP21K13774.

\section{The $p$-adic valuation of local resolvents in $\BZ_p$-extensions}\label{Gauss}
This section determines the valuation of local resolvents in totally ramified cyclic extensions. 
The main result is Theorem \ref{thm, lower bound gauss}, see also its consequences Corollary~\ref{cor, gauss valuation} and Theorem \ref{theorem, Z_p gauss valuation}.
\subsection{The set-up} Let $p$ be an odd prime. Fix an algebraic closure $\overline{\BQ}_p$ of ${\BQ}_p$. 

Let $K$ be a finite extension of $\BQ_p$, 
$\pi$ a uniformizer and $k$ the residue field.
Let $v_\pi$ be the valuation on $\ov{\BQ}_{p}$ normalized as $v_\pi(\pi)=1$. 
Let $L$ be a finite abelian extension of $K$ with Galois group $G$. 
For $\alpha \in \mathcal{O}_L$ and a character $\chi$ of $G$, define  
\[
\langle \alpha|\chi \rangle_{G}:=\sum_{\gamma \in G}    \chi(\gamma) \alpha^\gamma \;\in\;\overline{\mathbb{Q}}_p.
\]
For simplicity, we often denote $\langle \alpha|\chi \rangle_{G}$
by 
$\langle \alpha|\chi \rangle$.

The purpose of this section is to determine 
 the minimal $p$-adic valuation of $(\langle \alpha|\chi \rangle)_{\alpha \in \mathcal{O}_L}$  
 under the following two conditions: 
 
\begin{enumerate}

 \item[(\mylabel{item_Ram}{ram})] 
 The extension $L/K$ has the following type of upper ramification groups: 
\[
G=G^{-1}=G^0\supset G^1 \supsetneq \cdots \supsetneq G^n\supsetneq G^{n+1}=\{1\}
\]
for a non-negative integer $n$ so that 
$G^i/G^{i+1}$ is of order $p$ for $1\leq i\leq n$ and $G^0/G^1$ is of order $p-1$.
\item[(\mylabel{item_Cyc}{cyc})] $G^1$ is cyclic.
\end{enumerate}
For example, $\mathbb{\BQ}_p(\zeta_{p^{n+1}})$ over $\BQ_p$ satisfies these conditions. 
Note that  $\{0, 1, \dots, n\}$ is the jump sequence of the 
upper ramification groups by the Hasse-Arf theorem.  
Moreover, the existence of $L$ satisfying \eqref{item_Ram} for a sufficiently large $n$ implies that 
$K$ is unramified over $\BQ_p$ 
(cf. \cite{Maus}, \cite{Wy}). 

Let $K_m$ be the fixed field of $G^m$. In particular, 
$L=K_{n+1}$ and
 $K_1/K_0$ is a  tame extension of degree $p-1$. 
 Let $\varpi_m$ be a uniformizer of $K_{m}$. 
For simplicity, we denote the trace $\mathrm{Tr}_{K_{i+1}/K_i}$ by 
$\mathrm{Tr}_{i+1/i}$ and often the maximal ideal $\fm_{K_{i}} \subset \cO_{K_{i}}$ by $\fm_{i}$. 
We say $\chi$ is of conductor $p^{n+1}$ if  
$\chi |_{G^n}$ is non-trivial.

\subsubsection{Preliminaries}
\begin{lem}\label{lemma, lagrange}
For $\alpha, \beta \in \mathcal{O}_L$, we have
\begin{align*}
 \sum_{\gamma \in G}  \mathrm{Tr}_{L/K}(\alpha^\gamma\beta) \gamma
=\left(  \sum_{\gamma \in G}  \alpha^\gamma \gamma \right) 
\left(\sum_{\gamma \in G}  \beta^\gamma \gamma^{-1} \right).  
\end{align*}
\end{lem}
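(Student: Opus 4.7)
The plan is to prove the identity by direct expansion of both sides in the group algebra $\overline{\mathbb{Q}}_p[G]$ and matching coefficients, the only trick being a bookkeeping change of summation variable. The computation is elementary and uses that $G$ is abelian, so no subtler input is required.

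First, I would expand the trace on the left. Writing $\mathrm{Tr}_{L/K}(\alpha^\gamma \beta) = \sum_{\sigma \in G}(\alpha^\gamma \beta)^\sigma = \sum_{\sigma \in G}\alpha^{\gamma\sigma}\beta^\sigma$, the left-hand side becomes
\[
\sum_{\gamma \in G}\mathrm{Tr}_{L/K}(\alpha^\gamma\beta)\,\gamma
\;=\;\sum_{\gamma,\sigma \in G}\alpha^{\gamma\sigma}\beta^\sigma\,\gamma.
\]

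Second, I would expand the product on the right, using two dummy indices $\tau,\sigma$:
\[
\Bigl(\sum_{\tau \in G}\alpha^\tau\tau\Bigr)\Bigl(\sum_{\sigma \in G}\beta^\sigma\sigma^{-1}\Bigr)
\;=\;\sum_{\tau,\sigma \in G}\alpha^\tau\beta^\sigma\,\tau\sigma^{-1}.
\]
Now I would reindex by setting $\gamma := \tau\sigma^{-1}$, so that $\tau = \gamma\sigma$ and, as $\tau$ ranges over $G$ for each fixed $\sigma$, $\gamma$ also ranges over $G$. The right-hand side becomes
\[
\sum_{\gamma,\sigma \in G}\alpha^{\gamma\sigma}\beta^\sigma\,\gamma,
\]
which agrees with the expanded left-hand side.

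There is no real obstacle here; the only substantive point is that the identity crucially uses $G$ abelian in the guise of writing $\alpha^\tau$ and $\alpha^{\gamma\sigma}$ without ambiguity, which is granted by the standing hypothesis that $L/K$ is abelian. One could view the statement more structurally as the formula $\mathrm{Tr}\otimes\mathrm{id} = \mu\circ(\mathrm{id}\otimes\iota)$ applied to the Lagrange-type resolvents $\sum_\gamma \alpha^\gamma\gamma$ and $\sum_\gamma \beta^\gamma\gamma^{-1}$, where $\iota$ is the antipode $\gamma\mapsto \gamma^{-1}$ of $K[G]$; this is the natural conceptual wrapping, but the coefficient-wise verification above is the most direct route.
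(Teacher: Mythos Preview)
Your proof is correct and follows essentially the same approach as the paper's: both expand the product on the right-hand side and perform the substitution $\tau=\gamma\sigma$ (equivalently $\gamma=\tau\sigma^{-1}$) to match the trace expansion. The paper is slightly terser, leaving the identification $\sum_{\sigma}\alpha^{\tau\sigma}\beta^{\sigma}=\mathrm{Tr}_{L/K}(\alpha^{\tau}\beta)$ implicit, whereas you spell out both sides.
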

\begin{proof}
The assertion follows from 
 \begin{align*}
 \left(  \sum_{\gamma \in G}  \alpha^\gamma \gamma \right) 
\left(\sum_{\gamma \in G}  \beta^\gamma \gamma^{-1} \right)= \sum_{\gamma, \sigma \in G}
  \alpha^\gamma\beta^{\sigma} \gamma \sigma^{-1} 
= \sum_{\tau, \sigma \in G}  \alpha^{\tau\sigma}\beta^{\sigma} \tau.
\end{align*}
\end{proof}

By Lemma \ref{lemma, lagrange}, 
\begin{equation}\label{equation, gauss-trace}
\langle \alpha| \chi \rangle \langle \beta|\chi^{-1} \rangle=
 \sum_{\gamma \in G}  \mathrm{Tr}_{L/K}(\alpha^\gamma\beta) \chi(\gamma).
\end{equation}
We first investigate the $p$-adic valuation of the right-hand side.

\begin{lem}\label{lem, Herbrand}
Let $i$ be an integer 
such that $1\leq i \leq n+1$. 
\begin{itemize}
\item[i)] For $0\leq j \leq i$, the Herbrand function satisfies $$\psi_{K_{i}/K}(j)=p^j-1.$$  
In particular, for an integer $u$ such that $p^{j-1}\leq u \leq p^j-1$, we have 
$H_{u}=H^{j}$ where
 $H_u$ denotes the lower ramification group of $H:=G/G^{i}=\mathrm{Gal}(K_{i}/K)$.  
\item[ii)] For $0 \leq k \leq p-1$, we have $
\mathrm{Tr}_{i+1/i}\mathfrak{m}^k_{i+1}=\pi\mathcal{O}_{K_i}. $
\end{itemize}
\end{lem}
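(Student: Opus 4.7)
The plan is to handle the two parts separately using the Herbrand function and the classical formula for the different of a totally ramified extension. For part (i), set $H=\mathrm{Gal}(K_i/K)=G/G^i$; the hypothesis \eqref{item_Ram} descends to $H$ via Herbrand's theorem on quotients $H^v=G^vG^i/G^i$, so $|H^0|=(p-1)p^{i-1}$ and $|H^k|=p^{i-k}$ for $1\le k\le i$, with upper-numbering breaks at $0,1,\dots,i-1$. The Herbrand function $\psi_{K_i/K}=\psi_H$ is piecewise linear with slope $(H^0:H^k)=(p-1)p^{k-1}$ on $(k-1,k)$, so for $0\le j\le i$,
\[
\psi_H(j)=\sum_{k=1}^{j}(H^0:H^k)=(p-1)\sum_{k=1}^{j}p^{k-1}=p^{j}-1.
\]
The ``in particular'' assertion then follows from Herbrand's identity $H^v=H_{\psi_H(v)}$: the lower-numbering breaks of $H$ occur precisely at $\psi_H(j)=p^{j}-1$, so on each integer interval $[p^{j-1},p^{j}-1]$ the group $H_u$ is constant, equal to $H_{p^j-1}=H^j$.

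For part (ii), $K_{i+1}/K_i$ is a totally ramified cyclic extension of degree $p$, so its ramification admits a unique break $b$ (upper and lower agree), and the different exponent is $d=(b+1)(p-1)$. Combined with the standard trace formula
\[
\mathrm{Tr}_{L'/K'}(\mathfrak{m}_{L'}^{k})=\mathfrak{m}_{K'}^{\lfloor(k+d)/e\rfloor}
\]
(valid for any totally ramified separable extension $L'/K'$ of degree $e$ with different exponent $d$) and the equality $\pi\cO_{K_i}=\mathfrak{m}_i^{(p-1)p^{i-1}}$ coming from $v_{K_i}(\pi)=[K_i:K]=(p-1)p^{i-1}$, the assertion reduces to showing $b=p^i-1$: indeed, plugging $d=p^i(p-1)$ yields $\lfloor(k+d)/p\rfloor=(p-1)p^{i-1}$ for every $0\le k\le p-1$.

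To pin down $b$, I would reapply part (i) to the quotient $H'=\mathrm{Gal}(K_{i+1}/K)=G/G^{i+1}$, which satisfies the same hypothesis with $i$ replaced by $i+1$: by part (i), its lower-ramification groups are $H'_u=G^j/G^{i+1}$ for $p^{j-1}\le u\le p^{j}-1$ and $1\le j\le i$, with $H'_u=\{1\}$ for $u\ge p^i$. Intersecting with the subgroup $G^i/G^{i+1}\subset H'$ corresponding to $K_{i+1}/K_i$ gives $\mathrm{Gal}(K_{i+1}/K_i)_u=G^i/G^{i+1}$ for $0\le u\le p^i-1$ and $\{1\}$ for $u\ge p^i$, forcing $b=p^i-1$. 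The entire argument is bookkeeping with ramification filtrations; the only subtlety worth flagging is the self-referential use of part (i) inside part (ii), which is legitimate because \eqref{item_Ram} is inherited by every quotient $G/G^m$.
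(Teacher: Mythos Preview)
Your argument is correct and follows essentially the same route as the paper: part~(i) is the same integral/sum computation of $\psi$, and part~(ii) is the same combination of the trace formula from Serre's \emph{Local Fields} with the determination of the different exponent $d=(p-1)p^i$ via part~(i). The only cosmetic difference is that the paper reads off the break directly from the valuation $v_{K_{i+1}}(\sigma\varpi_{i+1}-\varpi_{i+1})=p^i$ for a generator $\sigma$ of $G^i/G^{i+1}$, whereas you phrase it via the compatibility of lower numbering with the subgroup $G^i/G^{i+1}\subset H'$; these are equivalent.
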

\begin{proof}
 i) The associated ramification group $H^{j}$ is $G^j/G^{i}=\mathrm{Gal}(K_{i}/K_j)$. 
 So 
\[
\psi_{K_{i}/K}(j)=\int_{0}^{j} (H^0: H^w)dw
=(K_1: K)(1+\cdots +p^{j-1})=p^j-1.
\] 
ii) By \cite[Ch.~V, ~\S3, Lem.~3 and Lem.~4]{Se}, 
we have 
 $$\mathrm{Tr}_{i+1/i}\mathfrak{m}^k_{i+1}=\mathfrak{m}_{i}^r$$ 
 for 
 $r=\lfloor (d+k)/p\rfloor$
where 
 $d$ 
 is  the exponent of the different of $K_{i+1}/K_i$ and the symbol $\lfloor x\rfloor$ denotes the largest integer $\leq x$.
 
 Let $\sigma$ be a generator of $\mathrm{Gal}(K_{i+1}/K_i)=\mathrm{Gal}(K_{i+1}/K)^i$. 
 Then by i) 
 the valuation of $\sigma \varpi_{i+1}-\varpi_{i+1}$ is $p^{i}$. 
Therefore, the exponent of the different generated by $\prod_{\sigma \in \mathrm{Gal}(K_{i+1}/K_i)\setminus\{e\}}(\sigma\varpi_{i+1}-\varpi_{i+1})$ is  
$d=(p-1)p^i$. 
In particular, $r=p^{i-1}(p-1)$ and so  $\mathrm{Tr}_{i+1/i}\mathfrak{m}^k_{i+1}=\mathfrak{m}_i^r=
\pi\mathcal{O}_{K_i}$. 
\end{proof}

\begin{remark}
Since the Herbrand function and the exponent of the different 
are determined 
solely 
by the upper ramification groups, 
it is sufficient to prove Lemma \ref{lem, Herbrand} for one extension of  
the same type of upper ramification groups. 
Therefore, 
we may assume 
$L=\mathbb{\BQ}_p(\zeta_{p^{n+1}})$ and 
prove Lemma \ref{lem, Herbrand} by direct calculations.  
\end{remark}

\begin{lem}\label{lem, trace 1}
 For $\sigma \in G^n$ and $\alpha, \beta \in \mathcal{O}_{L}$, 
   we have 
\[
\mathrm{Tr}_{n+1/0}\left((\sigma \alpha-\alpha)\beta\right) \in \mathfrak{m}_K^{n+1}.
\]
\end{lem}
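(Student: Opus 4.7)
The plan is to split the assertion into two independent ingredients, a pointwise bound $(\sigma-1)\mathcal{O}_{L} \subseteq \mathfrak{m}_{L}^{p^{n}}$ coming from the position of $\sigma$ in the ramification filtration, and an ideal-theoretic bound $\mathrm{Tr}_{L/K}(\mathfrak{m}_{L}^{p^{n}}) \subseteq \mathfrak{m}_{K}^{n+1}$ coming from the trace formula on ideals. Here $L = K_{n+1}$ and $\mathfrak{m}_{L} = \mathfrak{m}_{n+1}$.

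For the first ingredient, since $K_{n}$ is the fixed field of $G^{n}$, Lemma~\ref{lem, Herbrand} i), applied with $i = n+1$ and $j = n$, gives $\psi_{L/K}(n) = p^{n} - 1$, so in the lower numbering of $G = \mathrm{Gal}(L/K)$ one has $G^{n} = G_{p^{n}-1}$. The standard equivalent description
\[
G_{v} = \{\gamma \in G \,:\, v_{L}(\gamma x - x) \geq v+1 \text{ for all } x \in \mathcal{O}_{L}\}
\]
then yields $v_{L}((\sigma-1)x) \geq p^{n}$ for every $x \in \mathcal{O}_{L}$. Multiplying by $\beta \in \mathcal{O}_{L}$ preserves this bound, so $(\sigma\alpha - \alpha)\beta \in \mathfrak{m}_{L}^{p^{n}}$.

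For the second ingredient, the identity $\mathrm{Tr}_{i+1/i}(\mathfrak{m}_{i+1}^{k}) = \mathfrak{m}_{i}^{\lfloor (k + (p-1)p^{i})/p \rfloor}$ is established inside the proof of Lemma~\ref{lem, Herbrand} ii) for arbitrary $k \geq 0$, though the statement only records the case $0 \leq k \leq p-1$. I would either iterate this through the tower $L = K_{n+1} \supset K_{n} \supset \cdots \supset K_{0} = K$, or, more cleanly, assemble it in one shot via the tower formula for differents: this yields $d_{L/K} = p^{n}((p-1)n + p - 2)$ in $v_{L}$-valuation, and Serre's formula $\mathrm{Tr}_{L/K}(\mathfrak{m}_{L}^{j}) = \mathfrak{m}_{K}^{\lfloor (j + d_{L/K})/e_{L/K}\rfloor}$ with $j = p^{n}$ and $e_{L/K} = (p-1)p^{n}$ collapses to
\[
\frac{p^{n} + p^{n}((p-1)n + p - 2)}{(p-1)p^{n}} = \frac{(p-1)(n+1)}{p-1} = n+1.
\]
Hence $\mathrm{Tr}_{L/K}(\mathfrak{m}_{L}^{p^{n}}) = \mathfrak{m}_{K}^{n+1}$, and together with the first ingredient this gives the lemma.

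The main obstacle is purely bookkeeping: matching the upper-numbering hypothesis on $\sigma$ against the lower-numbering characterization that controls $v_{L}(\sigma x - x)$, and computing $d_{L/K}$ correctly through the tower (either in one shot or stepwise via the formula embedded in Lemma~\ref{lem, Herbrand} ii)). No conceptually new input is needed beyond the ramification data already recorded in Lemma~\ref{lem, Herbrand}.
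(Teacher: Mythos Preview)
Your proposal is correct and follows essentially the same route as the paper: both arguments convert $\sigma\in G^{n}$ to $\sigma\in G_{p^{n}-1}$ via Lemma~\ref{lem, Herbrand}~i) to obtain $(\sigma\alpha-\alpha)\beta\in\mathfrak{m}_{L}^{p^{n}}$, and then compute $\mathrm{Tr}_{L/K}(\mathfrak{m}_{L}^{p^{n}})$. The only cosmetic difference is that the paper observes $\mathfrak{m}_{L}^{p^{n}}=\varpi_{1}\mathcal{O}_{L}$ and pulls $\varpi_{1}\in K_{1}$ through $\mathrm{Tr}_{n+1/1}$ before iterating Lemma~\ref{lem, Herbrand}~ii), whereas you assemble the total different $d_{L/K}$ and apply Serre's formula once; these are equivalent bookkeepings of the same tower computation.
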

\begin{proof}
Note that $G^n=G_{\psi(n)}=G_{p^n-1}$ by Lemma \ref{lem, Herbrand} i). 
Hence  
\[
\sigma \alpha-\alpha \in \frak{m}_{n+1}^{p^n}=\mathcal{O}_L\varpi_{1}.
\]
Therefore, by Lemma  \ref{lem, Herbrand} ii),  
\[
\mathrm{Tr}_{n+1/0} (\varpi_1\beta)
=\mathrm{Tr}_{1/0}(\varpi_1 \mathrm{Tr}_{n+1/1}\beta) 
\in \pi^n \mathrm{Tr}_{1/0} \frak{m}_1\subset \frak{m}^{n+1}_K. 
\]
\end{proof}

Take a set of representative $S$ of $G/G^n$ in $G$. Then 
\begin{align}
& \sum_{\gamma \in G}  \mathrm{Tr}_{n+1/0}(\alpha^\gamma\beta) \gamma=
 \sum_{\sigma \in S}  \sum_{\tau \in G^n}  \mathrm{Tr}_{n+1/0}(\alpha^{\sigma\tau}\beta)
 \sigma\tau\\
 &=\sum_{\sigma \in S}  \sum_{\tau \in G^n} 
 \mathrm{Tr}_{n+1/0}((\alpha^{\sigma\tau}-\alpha^\sigma)\beta)\sigma\tau+
 \sum_{\sigma \in S}   \mathrm{Tr}_{n+1/0}(\alpha^\sigma\beta)\sigma \sum_{\tau \in G^n} \tau 
 \label{equation, trace reduction}\\
& \equiv  \sum_{\sigma \in S}   \mathrm{Tr}_{n+1/0}(\alpha^\sigma\beta)\sigma 
\sum_{\tau \in G^n} \tau \mod \mathfrak{m}_K^{n+1}
\end{align}
by Lemma \ref{lem, trace 1}. 
If the conductor of $\chi$ is $p^{n+1}$, 
then $\sum_{\tau \in G^n} \chi(\tau)=0$. 
Therefore, part i) of the following theorem 
 is proved. 
\subsection{Main result}
\subsubsection{}
\begin{thm}\label{thm, lower bound gauss}
Let $K$ be a $p$-adic local field and $\pi$ a uniformizer. 
Let $L$ be a finite abelian extension of $K$ satisfying the condition \eqref{item_Ram}. 
Let $\chi$ be a character of the Galois group $G=\Gal(L/K)$ of conductor $p^{n+1}$.
 \begin{itemize}
 \item[i)]  For any $\alpha, \beta \in \mathcal{O}_L$, we have 
 \[
\langle \alpha|\chi \rangle \langle \beta|\chi^{-1}\rangle=\sum_{\gamma \in G}  \mathrm{Tr}_{L/K}(\alpha^\gamma\beta) \chi(\gamma)
 \equiv 0 \mod \pi^{n+1}. 
\]
\item[ii)] Suppose that $n=0$. Then 
there exist $\alpha, \beta \in \mathcal{O}_L$ such that 
 \[
v_\pi(\langle \alpha|\chi \rangle)+v_\pi(\langle \beta|\chi^{-1}\rangle)=v_\pi\left( \sum_{\gamma \in G}  \mathrm{Tr}_{L/K}(\alpha^\gamma\beta) \chi(\gamma)\right)=1
\]
where the valuation $v_\pi$ on $\overline{\Q}_p$ is normalized as $v_\pi(\pi)=1$. 
\item[iii)] Suppose $n\geq1$. 
Assume that $G^1$ is cyclic and $K$ is unramified over $\BQ_p$. 
Write $\chi=\omega \psi$ 
for $\omega$ 
a character  factoring through the unique subgroup $\Delta$ of $G$ of order $p-1$ 
and $\psi$ 
of order $p^{n}$. 
For $\alpha \in \mathcal{O}_L$, 
put $$\alpha_\omega:=
\langle \alpha|\omega \rangle_{\Delta}=\sum_{\rho \in \Delta} \omega(\rho) \alpha^\rho.$$ 
 Then for any $\alpha \in \mathfrak{m}_{L}$ with $v_L(\alpha_\omega)<p$, 
  there exists $\beta \in \mathcal{O}_L$ such that 
 \[
v_\pi(\langle \alpha|\chi \rangle)+v_\pi(\langle \beta|\chi^{-1}\rangle)=v_\pi\left( \sum_{\gamma \in G}  \mathrm{Tr}_{L/K}(\alpha^\gamma\beta) \chi(\gamma)\right)=n+1. 
\]
\end{itemize}
\end{thm}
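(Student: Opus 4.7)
Part (i) is essentially proven in the paragraph preceding the theorem. The identity \eqref{equation, gauss-trace} converts the product of resolvents into the trace sum, and the decomposition \eqref{equation, trace reduction} over a set $S$ of coset representatives for $G/G^n$ splits the right-hand side into (a) a correction term lying in $\mathfrak{m}_K^{n+1}$ by Lemma \ref{lem, trace 1}, and (b) a principal term of the form $\bigl(\sum_{\sigma\in S}\mathrm{Tr}_{L/K}(\alpha^\sigma\beta)\chi(\sigma)\bigr)\sum_{\tau\in G^n}\chi(\tau)$, which vanishes since $\chi|_{G^n}$ is nontrivial. So the heart of what remains is parts (ii) and (iii).

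For part (ii), $L=K_1$ is tamely totally ramified of degree $p-1$ over $K$ with cyclic Galois group $\Delta=G$ and $\chi$ a nontrivial character. My plan is to take $\alpha=\beta=\varpi_L$, a uniformizer of $L$. By \eqref{equation, gauss-trace},
\[
\langle \varpi_L|\chi\rangle\langle \varpi_L|\chi^{-1}\rangle = \sum_{\gamma\in\Delta}\mathrm{Tr}_{L/K}(\varpi_L^{1+\gamma})\chi(\gamma),
\]
and for each $\gamma\in\Delta$ the trace $\mathrm{Tr}_{L/K}(\varpi_L^{1+\gamma})$ lies in $\pi\mathcal{O}_K$ with its reduction modulo $\pi^2$ a unit depending explicitly on $\gamma$ through the tame character action on $\varpi_L$. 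The sum over $\Delta$ then reduces, modulo $\pi^2$, to a classical tame Gauss sum in the residue field, which is nonzero; this yields valuation exactly $1$.

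For part (iii), the strategy is to sharpen \eqref{equation, trace reduction} one further order in $\pi$. After using Lemma \ref{lem, trace 1} and the vanishing $\sum_{\tau\in G^n}\chi(\tau)=0$, the remaining expression is
\[
\sum_{\sigma\in S}\sum_{\tau\in G^n}\mathrm{Tr}_{L/K}\bigl((\alpha^{\sigma\tau}-\alpha^\sigma)\beta\bigr)\chi(\sigma\tau),
\]
and by Lemma \ref{lem, Herbrand}(i) one has $\alpha^\tau-\alpha\in\varpi_1\mathcal{O}_L$ for $\tau\in G^n$. The plan has three steps. First, write $\alpha^\tau-\alpha=\varpi_1 f(\tau,\alpha)$ and apply the tower $L\supset K_1\supset K$ together with Lemma \ref{lem, Herbrand}(ii) to expand $\mathrm{Tr}_{L/K}(\varpi_1 f(\tau,\alpha)^\sigma\beta)$ modulo $\pi^{n+2}$. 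Second, use the unramifiedness of $K/\BQ_p$ and the cyclicity of $G^1$ to factor $\chi=\omega\psi$ and perform the tame sum first, extracting the projection $\alpha_\omega$ and reducing to a purely wild-level question on $G^1\cong\BZ/p^n$. Third, choose $\beta$ inductively as a suitable monomial in uniformizers of the tower $K_{n+1}\supset K_n\supset\cdots\supset K_1$ so that, under the hypothesis $v_L(\alpha_\omega)<p$, the leading coefficient of the whole expression modulo $\pi^{n+2}$ is a unit in the residue field.

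The main obstacle will be this third step: the inductive construction of $\beta$ together with the verification that the leading coefficient does not vanish. The hypothesis $v_L(\alpha_\omega)<p$ should be exactly what prevents cancellation, guaranteeing that $\alpha_\omega$ maps nontrivially into the relevant graded piece $\mathfrak{m}_L^{p^n}/\mathfrak{m}_L^{p^n+1}$ under a generator of $G^n$, so that the wild Gauss-like subsum contributes a unit rather than $0$ modulo $\pi$. Without this bound, the leading coefficient could vanish and one would need to descend deeper in the ramification filtration; the theorem is tight precisely because $\alpha_\omega$ is small enough.
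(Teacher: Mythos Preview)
Your treatment of part (i) is fine and matches the paper.

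For part (ii), the choice $\alpha=\beta=\varpi_L$ does \emph{not} work in general. In the Kummer model $\varpi_L^{p-1}=\pi$ one has $\varpi_L^\gamma=\zeta(\gamma)\varpi_L$ with $\zeta:\Delta\isoarrow\mu_{p-1}$, so $\varpi_L\varpi_L^\gamma=\zeta(\gamma)\varpi_L^2$ and $\mathrm{Tr}_{L/K}(\varpi_L^2)=\varpi_L^2\sum_\delta\zeta(\delta)^2=0$ for $p>3$. Hence every term $\mathrm{Tr}_{L/K}(\varpi_L^{1+\gamma})$ vanishes identically, not a unit times $\pi$, and there is no nonzero ``classical tame Gauss sum'' to fall back on. Even for other uniformizers, the leading dependence on $\gamma$ is through $\zeta(\gamma)$, so $\sum_\gamma\chi(\gamma)\zeta(\gamma)=0$ unless $\chi=\zeta^{-1}$; your argument only treats that single character. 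The paper avoids this by an abstract duality: it shows the pairing $\mathfrak{m}_L/(\pi)\times\mathfrak{m}_L/\mathfrak{m}_L^{p-1}\to k$, $(x,y)\mapsto\pi^{-1}\mathrm{Tr}_{L/K}(xy)$, is nondegenerate, takes $\alpha$ with $\mathcal{O}_L=\mathcal{O}_K[G]\alpha$ so that $(\alpha^\sigma-\alpha)_{\sigma\neq e}$ is a $k$-basis of $\mathfrak{m}_L/(\pi)$, and then \emph{chooses} $\beta$ to represent the linear functional picking out a single $\sigma_0$.

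For part (iii), your Steps~1--2 are in the right direction (and the paper does exactly this reduction to \eqref{equation, strict valuation 2}), but Step~3 is where the real content lies, and your plan there is both vague and different from what actually works. The paper does not build $\beta$ as a monomial in tower uniformizers. Instead it introduces the $k$-vector space $V=\mathcal{O}_L/\mathfrak{M}_1$ with $\mathfrak{M}_1=\mathfrak{m}_L^{p^n}$, shows the trace pairing $V\times V\to k$, $(x,y)\mapsto\pi^{-n}\mathrm{Tr}_{L/K}(xy)$, is nondegenerate (Lemma~\ref{lemma, non-degenerate}), and proves the key Lemma~\ref{lemma, N is nilpotent}: for $\alpha$ with $v_L(\alpha)<p$ the operator $N=\gamma-1$ on $V$ satisfies $N^{p^{n-1}-1}\alpha\neq 0$ and $N^{p^{n-1}}\alpha=0$. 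This forces $\{\alpha,\gamma\alpha,\dots,\gamma^{p^{n-1}-1}\alpha\}$ to be $k$-linearly independent in $V$, and then nondegeneracy gives $\beta$ with $\pi^{-n}\mathrm{Tr}_{L/K}(\alpha^{\gamma^j}\beta)\equiv\delta_{j,0}\pmod\pi$. Summing over $S^1$ yields valuation exactly $n$, and one more application of trace--zero manipulations (using either $\mathrm{Tr}\alpha_\omega=0$ when $\omega\neq 1$, or a harmless shift of $\alpha$ by a constant when $\omega=1$) gives $n+1$. The hypothesis $v_L(\alpha_\omega)<p$ is used precisely in the nilpotency lemma, not in the way you describe. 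Your proposal is missing this linear-independence mechanism; without it, there is no reason your ``leading coefficient'' is a unit, and an inductive construction of $\beta$ would have to rediscover the same obstruction.
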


\begin{remark}
 If $\omega=1$, then any uniformizer $\alpha$ of $L^\Delta$ satisfies  the condition 
 $v_L(\alpha_\omega)<p$ in iii). 
 If $\omega\not=1$, there exists $\alpha$ satisfying the condition by ii).
\end{remark} 
The above theorem will be proven in \S\ref{subsection-proof-thm-gauss}. We first describe some of its consequences.

\begin{cor}\label{cor, gauss valuation}
Let $K$ be a $p$-adic local field and 
$L$ a finite abelian extension of $K$ satisfying the condition \eqref{item_Ram} and \eqref{item_Cyc}. 
Let $\chi$ be a character of the Galois group $G=\Gal(L/K)$ of order $p^{n}>1$.
Assume that $K$ is unramified over $\BQ_p$. 
Then we have $$v_{\pi}(\langle \alpha|\chi \rangle) \geq \frac{n+1}{2}$$ for 
any $\alpha\in\cO_L$.   
Moreover, 
the equality holds for 
any $\alpha \in \mathfrak{m}_{L}$ with $v_L(\alpha)<p$.
\end{cor}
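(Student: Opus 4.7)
The plan is to derive Corollary~\ref{cor, gauss valuation} from Theorem~\ref{thm, lower bound gauss}. As a preliminary, note that since $\chi$ has order $p^n$ while $G/G^n$ has $p$-part of order only $p^{n-1}$ (using the ramification filtration \eqref{item_Ram} and the cyclicity of $G^1$), the restriction $\chi|_{G^n}$ must be non-trivial. Hence $\chi$ has conductor $p^{n+1}$, so Theorem~\ref{thm, lower bound gauss} is applicable.

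For the lower bound $v_\pi(\langle \alpha|\chi\rangle) \geq (n+1)/2$, the strategy is to first establish the symmetry
\[
v_\pi(\langle \alpha|\chi\rangle) \;=\; v_\pi(\langle \alpha|\chi^{-1}\rangle), \qquad \alpha \in \cO_L,
\]
from Galois invariance of $v_\pi$. Since $K/\BQ_p$ is unramified, the cyclotomic character $\Gal(K(\zeta_{p^n})/K) \to (\BZ/p^n\BZ)^\times$ is an isomorphism, so there exists $\tau \in \Gal(\overline{\BQ}_p/K)$ with $\tau(\zeta_{p^n}) = \zeta_{p^n}^{-1}$. Setting $\delta := \tau|_L \in G$ and using that $L(\zeta_{p^n})/K$ is abelian (compositum of abelian extensions), so that $\tau$ commutes with any extension of $\gamma \in G$, a direct calculation gives
\[
\tau\bigl(\langle \alpha|\chi\rangle\bigr) \;=\; \sum_{\gamma \in G} \chi(\gamma)^{-1}\alpha^{\gamma\delta} \;=\; \chi(\delta)\,\langle \alpha|\chi^{-1}\rangle.
\]
Since $\chi(\delta)$ is a root of unity and $v_\pi$ is $\tau$-invariant, the symmetry follows. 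Applying Theorem~\ref{thm, lower bound gauss}(i) with $\beta = \alpha$ then gives $2v_\pi(\langle \alpha|\chi\rangle) \ge n+1$, yielding the lower bound.

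For the equality case when $\alpha \in \fm_L$ satisfies $v_L(\alpha) < p$, the plan is to apply Theorem~\ref{thm, lower bound gauss}(iii). One first verifies the hypothesis $v_L(\alpha_\omega) < p$ via a short calculation using the $\omega$-isotypic decomposition of $\cO_L$ under the tame $\Delta$-action (available since $\gcd(|\Delta|, p)=1$); the valuations of non-zero elements of the $\omega$-isotypic piece lie in a coset $\tilde{e}_\omega + (p-1)\BZ_{\ge 0}$ with $\tilde{e}_\omega \in \{0,\dots,p-2\}$, and the condition $v_L(\alpha) < p$ confines $v_L(\alpha_\omega)$ to the bottom of this coset. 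Part (iii) then produces $\beta \in \cO_L$ with
\[
v_\pi(\langle \alpha|\chi\rangle) + v_\pi(\langle \beta|\chi^{-1}\rangle) \;=\; n+1.
\]
Combined with the lower bound applied to $\chi^{-1}$, namely $v_\pi(\langle \beta|\chi^{-1}\rangle) \ge (n+1)/2$, this forces $v_\pi(\langle \alpha|\chi\rangle) \le (n+1)/2$, hence equality. The principal obstacle is the Galois symmetry step, which depends crucially on $K$ being unramified over $\BQ_p$ and on the abelian structure of $L(\zeta_{p^n})/K$; verifying the hypothesis $v_L(\alpha_\omega) < p$ is a secondary but non-trivial isotypic computation.
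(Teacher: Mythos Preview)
Your approach coincides with the paper's: establish the symmetry $v_\pi(\langle\alpha|\chi\rangle) = v_\pi(\langle\alpha'|\chi^{-1}\rangle)$ via a Galois element over $K$ inverting $p$-power roots of unity (the paper takes $\alpha' = \iota(\alpha)$; your version with $\alpha' = \alpha$ differs only by the root of unity $\chi(\delta)$), then combine with Theorem~\ref{thm, lower bound gauss}(i) for the lower bound and invoke part~(iii) together with the already-proved lower bound for the equality.

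The gap is your verification of the hypothesis $v_L(\alpha_\omega) < p$ of Theorem~\ref{thm, lower bound gauss}(iii). Since $\chi$ has order $p^n$, its $\Delta$-component $\omega$ is trivial, so $\alpha_\omega = \alpha_1 = \mathrm{Tr}_{L/L^\Delta}(\alpha) \in \fm_{L^\Delta}$, whose nonzero elements have $v_L$-valuation a positive multiple of $p-1$. Your isotypic argument does not force $v_L(\alpha_1) = p-1$: the minimal-valuation isotypic component of $\alpha$ need not be the trivial one. Concretely, $L/L^\Delta$ is totally tamely ramified of degree $p-1$, so there is a uniformizer $\varpi_L$ of $L$ on which $\Delta$ acts through a \emph{nontrivial} character; then $\alpha_1 = \mathrm{Tr}_{L/L^\Delta}(\varpi_L) = 0$ and $\langle\varpi_L|\chi\rangle_G = \langle\alpha_1|\psi\rangle_{G^1} = 0$, although $v_L(\varpi_L) = 1 < p$. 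Thus the equality clause of the Corollary is actually false for such $\alpha$, and no argument can close this. The paper's proof has the same lacuna (it cites Theorem~\ref{thm, lower bound gauss}(iii) without checking $v_L(\alpha_1) < p$); in its sole application, Theorem~\ref{theorem, Z_p gauss valuation}, one has $\alpha \in L^\Delta$, whence $\alpha_1 = (p-1)\alpha$ and the hypothesis holds automatically.
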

\begin{proof}
 Let $\iota$ be an element of $\mathrm{Gal}(\overline{\BQ}_p/K)$ such that 
 $\iota(\zeta_{p^m})=\zeta_{p^m}^{-1}$ for all natural numbers $m$.  
 The existence follows from the fact that $K \cap \BQ_p(\zeta_{p^\infty})=\BQ_p$  since 
 $K$ is unramified over $\BQ_p$.
 Note that $\iota(\langle \alpha|\chi \rangle)=\langle \iota(\alpha)|\chi^{-1}\rangle$ and so 
 $$v_{\pi}(\langle \alpha|\chi \rangle)=v_{\pi}(\langle \iota(\alpha)|\chi^{-1}\rangle).$$ 
The desired inequality follows from  (\ref{equation, gauss-trace}) and Theorem \ref{thm, lower bound gauss} i)  
with $\beta= \iota(\alpha)$. 

We now let $\alpha \in \fm_L$ with $v_{L}(\alpha)<p$, and 
take (another) $\beta$ as in Theorem \ref{thm, lower bound gauss} iii). 
Since 
both the valuations $v_\pi(\langle \alpha|\chi \rangle)$ and $v_\pi(\langle \iota(\alpha)|\chi^{-1}\rangle)$ are greater than 
or equal to  $\frac{n+1}{2}$, 
it follows that 
$$v_\pi(\langle \alpha|\chi \rangle)=\frac{n+1}{2}.$$  
\end{proof}

\begin{remark}
Suppose that $p>3$ and $n=0$. Then there exists a non-trivial character $\omega$ of  conductor $p$  
and $\alpha$ such that 
$v_{\pi}(\langle \alpha|\omega \rangle)=\frac{1}{p-1}<\frac{1}{2}=\frac{n+1}{2}$. 
In fact since $L/K$ is tame, there exists $\alpha \in \mathcal{O}_L$ such that $\mathcal{O}_L=\mathcal{O}_K[G]\alpha$. 
Then we have the character decomposition 
$\mathfrak{m}_L=\mathfrak{m}_K\oplus \bigoplus_{\omega\not=1} \mathcal{O}_K\langle \alpha|\omega \rangle$, 
and hence $v_{\pi}(\langle \alpha|\omega \rangle)=\frac{1}{p-1}$ for some $\omega$. 
\end{remark}

\begin{thm}\label{theorem, Z_p gauss valuation}
Let $\Psi$ be an unramified extension of $\BQ_p$ and $\pi$ a uniformizer. Let $\Psi_{\infty}/\Psi$ be a totally ramified $\BZ_p$-extension.
 Let $\chi$ be a finite character of $\mathrm{Gal}(\Psi_\infty/\Psi)$ of order $p^n>1$. 
 Then $$v_{\pi}(\langle \alpha|\chi \rangle_{\Gamma_n}) \geq \frac{n+1}{2}$$ for any $\alpha \in \mathcal{O}_{\Psi_n}$, where $\Gamma_n$ denotes the Galois group $\mathrm{Gal}(\Psi_n/\Psi)$ 
 for the $n$-th layer $\Psi_n$.  
 Moreover, 
the equality holds 
 for any uniformizer $\alpha$ of $\Psi_n$. 
\end{thm}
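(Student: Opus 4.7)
The plan is to deduce Theorem \ref{theorem, Z_p gauss valuation} from Corollary \ref{cor, gauss valuation} by enlarging $\Psi_n$ with a tame extension so that the larger extension satisfies conditions \eqref{item_Ram} and \eqref{item_Cyc}.

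First I determine the upper ramification filtration of $\Gamma_n = \mathrm{Gal}(\Psi_n/\Psi)$. By local class field theory, the totally ramified $\BZ_p$-extension $\Psi_\infty/\Psi$ corresponds to a continuous surjection $\phi\colon \cO_\Psi^\times \twoheadrightarrow \BZ_p$ annihilating $\mu_{q-1}$ (where $q$ is the residue cardinality). Since $p$ is odd and $\Psi/\BQ_p$ is unramified, the $p$-adic logarithm identifies $U_\Psi^{(v)}$ with $p^v \cO_\Psi$ for $v \geq 1$, so $\phi(U_\Psi^{(v)}) = p^{v-1}\BZ_p$. Consequently the image $(\Gamma_n)^v$ of $U_\Psi^{(v)}$ in $\Gamma_n = \BZ_p/p^n\BZ_p$ equals $p^{v-1}\BZ_p/p^n\BZ_p$, and the upper breaks of $\Gamma_n$ occur precisely at $v = 1,2,\ldots,n$, each quotient being cyclic of order $p$.

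Next, choose a totally tamely ramified extension $F/\Psi$ of degree $p-1$ (e.g., $F = \Psi(\zeta_p)$), and set $L = F \cdot \Psi_n$. Since $F$ and $\Psi_n$ have coprime degrees over $\Psi$, they are linearly disjoint, giving $\mathrm{Gal}(L/\Psi) \cong \mathrm{Gal}(F/\Psi) \times \Gamma_n$; the extension $L/\Psi$ is totally ramified, with wild inertia $\mathrm{Gal}(L/F) \cong \Gamma_n$ (cyclic). Because $U_\Psi^{(1)}$ lies in the kernel of the projection to the tame part (which factors through $\cO_\Psi^\times/U_\Psi^{(1)}$), the image of $U_\Psi^{(v)}$ in $\mathrm{Gal}(L/\Psi)$ for $v \geq 1$ lands entirely in $\Gamma_n$ and coincides with $(\Gamma_n)^v$; it follows that $L/\Psi$ satisfies \eqref{item_Ram} and \eqref{item_Cyc}.

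Lift $\chi$ to a character $\tilde\chi$ of $\mathrm{Gal}(L/\Psi)$ by declaring it trivial on $\mathrm{Gal}(F/\Psi)$; then $\tilde\chi$ still has order $p^n$. For $\alpha \in \cO_{\Psi_n} \subset \cO_L$, the element $\alpha$ is fixed by $\mathrm{Gal}(L/\Psi_n) = \mathrm{Gal}(F/\Psi)$, so a direct unwinding gives
\[
\langle \alpha \mid \tilde\chi\rangle_{\mathrm{Gal}(L/\Psi)} = (p-1)\,\langle \alpha \mid \chi\rangle_{\Gamma_n},
\]
and the two sides have the same $p$-adic valuation since $p-1$ is a $p$-adic unit. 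Applying Corollary \ref{cor, gauss valuation} to $\tilde\chi$ then yields $v_\pi(\langle \alpha \mid \chi \rangle_{\Gamma_n}) \geq (n+1)/2$ for all $\alpha \in \cO_{\Psi_n}$, with equality whenever $v_L(\alpha) < p$. For $\alpha$ a uniformizer of $\Psi_n$, $v_L(\alpha) = e(L/\Psi_n) = p-1 < p$, so the equality case applies and the proof is complete. The main subtlety is the class field theoretic identification $\mathrm{Gal}(L/\Psi)^v = (\Gamma_n)^v$ for $v \geq 1$ in the second step, where care is needed to handle upper/lower numbering conventions correctly.
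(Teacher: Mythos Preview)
Your proof is correct and follows essentially the same strategy as the paper: adjoin a tame extension of degree $p-1$ to $\Psi_n$, verify that the resulting extension $L/\Psi$ satisfies \eqref{item_Ram} and \eqref{item_Cyc}, and then reduce to Corollary~\ref{cor, gauss valuation}. The paper obtains the upper ramification filtration of $\Gamma_n$ by citing Proposition~\ref{prop, ramification groups of Z_p-ext} (together with compatibility of upper numbering with quotients), whereas you compute it directly via the local reciprocity map; and the paper relates the two resolvents via $\langle \alpha\mid\chi\rangle_G=\langle \mathrm{Tr}_{L/\Psi_n}\alpha\mid\chi\rangle_{\Gamma_n}$, which for $\alpha\in\Psi_n$ specializes to your identity $\langle\alpha\mid\tilde\chi\rangle_G=(p-1)\langle\alpha\mid\chi\rangle_{\Gamma_n}$.
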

\begin{proof}
Let $K_1$ be a tamely ramified extension of $\Psi$ of degree $p-1$, 
and put $K=\Psi$ and $K_{n+1}=K_1 \Psi_{n}$.
The Galois group  $\Gamma_n$  has 
the upper ramification filtration 
\[
\Gamma_n=\Gamma^{-1}_n=\Gamma_n^0=\Gamma^1_n \supsetneq \cdots \supsetneq \Gamma_n^n\supsetneq \Gamma^{n+1}_n=\{1\}
\] 
with $\Gamma_n^i/\Gamma_n^{i+1}$ of order $p$ for $1\leq i \leq n$ (cf.~Proposition~\ref{prop, ramification groups of Z_p-ext}, see also \cite{Ha77},~\cite{Maus},~\cite{Wy}).
Since the upper ramification filtration is compatible with quotients, 
$L:=K_{n+1}$ satisfies the condition \eqref{item_Ram}. 

Hence, the assertion follows from Corollary \ref{cor, gauss valuation}. 
Note that 
$\langle \alpha|\chi\rangle_{G}=
\langle \mathrm{Tr}_{K_{n+1}/\Psi_n}\alpha|\chi \rangle_{\Gamma_{n}}$ for $\alpha \in K_{n+1}$, where $G:=\Gal(L/K)$.
\end{proof}
\begin{remark}
In particular, the above determines the valuation of the classical Gauss sum only based on upper ramification filtration.
\end{remark}
\subsubsection{Proof of Theorem \ref{thm, lower bound gauss}}\label{subsection-proof-thm-gauss}
Now we prove Theorem \ref{thm, lower bound gauss} ii), iii). 

By (\ref{equation, trace reduction}), for an appropriate choice of $\alpha$, 
it suffices to show 
the existence of $\beta$ such that 
\begin{equation}\label{equation, strict valuation}
v_\pi\left(\sum_{\sigma \in S}  \sum_{\tau \in G^n} 
 \mathrm{Tr}_{n+1/0}((\alpha^{\sigma\tau}-\alpha^\sigma)\beta)\chi(\sigma\tau)\right)=n+1. 
\end{equation}

\begin{prop}\label{prop, valuation cond=p}
Suppose that $n=0$ and let $\chi$ be a non-trivial character.  
Then there exist $\alpha, \beta \in \mathcal{O}_L$ satisfying (\ref{equation, strict valuation}).
In particular,  
Theorem \ref{thm, lower bound gauss} ii) holds. 
\end{prop}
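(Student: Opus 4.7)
The plan is to exploit that when $n=0$, condition \eqref{item_Ram} forces $G=G^0$ and $G^1=\{1\}$, so $L/K$ is totally and tamely ramified, cyclic of degree $p-1$. In this tame setting the resolvents admit an entirely explicit description, and the desired exact valuation falls out at once.

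First I would construct an explicit Kummer generator. Since $p-1$ is coprime to the residue characteristic, Hensel's lemma applied to $X^{p-1}-1$ yields $\mu_{p-1}\subset \mathcal{O}_K^\times$. By tame ramification theory $L=K(\varpi)$ for a uniformizer $\varpi$ with $\varpi^{p-1}\in \pi\cdot\mathcal{O}_K^\times$; after replacing $\pi$ by $\varpi^{p-1}$ (still a uniformizer of $K$) one may assume $\varpi^{p-1}=\pi$. Then $\sigma(\varpi)^{p-1}=\sigma(\pi)=\pi=\varpi^{p-1}$ forces $\sigma(\varpi)=\omega(\sigma)\varpi$ for a canonical isomorphism $\omega\colon G\isoarrow \mu_{p-1}$.

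Writing $\chi=\omega^a$ with $1\leq a\leq p-2$, a direct computation yields
$$
\langle \varpi^k \mid \omega^b \rangle \;=\; \varpi^k \sum_{\sigma\in G}\omega(\sigma)^{b+k},
$$
which vanishes unless $b+k\equiv 0\pmod{p-1}$ and equals $(p-1)\varpi^k$ otherwise. Taking $\alpha=\varpi^{p-1-a}$ and $\beta=\varpi^a$ therefore gives $\langle\alpha\mid\chi\rangle=(p-1)\varpi^{p-1-a}$ and $\langle\beta\mid\chi^{-1}\rangle=(p-1)\varpi^a$. Since $p-1\in\mathcal{O}_K^\times$ and $v_\pi(\varpi)=1/(p-1)$, the product has $v_\pi$-valuation exactly $(p-1-a)/(p-1)+a/(p-1)=1$.

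Finally, in the $n=0$ case the reduction \eqref{equation, trace reduction} collapses: with $S=\{1\}$ and $G^n=G$, and since $\sum_{\tau\in G}\chi(\tau)=0$ for non-trivial $\chi$, the sum in \eqref{equation, strict valuation} reduces via \eqref{equation, gauss-trace} to $\langle\alpha\mid\chi\rangle\langle\beta\mid\chi^{-1}\rangle$. The preceding computation thus establishes \eqref{equation, strict valuation} with valuation exactly $n+1=1$, proving Proposition~\ref{prop, valuation cond=p} and hence Theorem~\ref{thm, lower bound gauss}~ii). The only step that requires care is the construction of the Kummer generator $\varpi$ with the stated Galois-action formula; once that is in place, the conclusion is essentially a bookkeeping of character orthogonality.
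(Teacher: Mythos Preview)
Your argument is correct and takes a genuinely different route from the paper. The paper's proof is duality-based: it shows the trace pairing $\mathfrak{m}_L/(\pi)\times \mathfrak{m}_L/\mathfrak{m}_L^{p-1}\to k$, $(x,y)\mapsto \pi^{-1}\mathrm{Tr}_{1/0}(xy)$, is non-degenerate, takes $\alpha$ to be a normal integral basis generator so that $(\alpha^\sigma-\alpha)_{\sigma\neq e}$ is a $k$-basis of $\mathfrak{m}_L/(\pi)$, and then uses the pairing to produce $\beta$ dual to a single basis vector. You instead exploit the Kummer-theoretic structure available when $n=0$: with $\varpi^{p-1}=\pi$ and $\sigma(\varpi)=\omega(\sigma)\varpi$, the resolvents $\langle\varpi^k\mid\omega^b\rangle$ are computed in closed form by character orthogonality, and the choice $\alpha=\varpi^{p-1-a}$, $\beta=\varpi^a$ (for $\chi=\omega^a$) gives the product $(p-1)^2\pi$ on the nose. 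Your approach is more explicit and elementary in this tame case; the paper's pairing argument, while less concrete here, is the template that generalizes to $n\geq 1$ in Proposition~\ref{prop, desired alpha}, where no Kummer description is available. The only minor remark is that your reduction of \eqref{equation, strict valuation} to $\langle\alpha\mid\chi\rangle\langle\beta\mid\chi^{-1}\rangle$ is exactly the identity $\sum_\tau \mathrm{Tr}((\alpha^\tau-\alpha)\beta)\chi(\tau)=\sum_\tau \mathrm{Tr}(\alpha^\tau\beta)\chi(\tau)$ combined with \eqref{equation, gauss-trace}, which is valid.
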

\begin{proof}
In this case (\ref{equation, strict valuation}) 
simplifies to
\[
v_\pi\left(\sum_{\tau \in G} 
 \mathrm{Tr}_{1/0}((\alpha^{\tau}-\alpha)\beta)\chi(\tau)\right)=1.
 \] 
As before, $\mathrm{Tr}_{1/0}\mathcal{O}_L=\mathcal{O}_K$ and 
$\mathrm{Tr}_{1/0}\mathfrak{m}_1^i=(\pi)$ if $1\leq i \leq p-1$ (cf.~Lemma \ref{lem, Herbrand} ii)). 
Hence the pairing 
\[
\frak{m}_L/(\pi) \times \frak{m}_L/\frak{m}_L^{p-1} 
\rightarrow k, \quad (x, y) \mapsto 
\frac{1}{\pi}\mathrm{Tr}_{1/0}(xy) 
\]
of $k$-vector spaces 
is non-degenerate. 

Let $\alpha$ be such that $\mathcal{O}_L=\mathcal{O}_K[G]\alpha$.
Since $L/K$ is totally ramified, 
$(\alpha^{\sigma}-\alpha)_{\sigma\not=e \in G}$ 
is a basis of the $k$-vector space $\frak{m}_L/(\pi)$. 
Hence for a fixed $\sigma_0\not=e \in G$,  there exists a $k$-linear map 
$f: \frak{m}_L/(\pi) \rightarrow k$ such that $f(\alpha^{\sigma_0}-\alpha)=1$ and 
$f(\alpha^{\sigma}-\alpha)=0$ for $\sigma \not=\sigma_0$.  
By the non-degeneracy of the above pairing, there exists $\beta$ such that 
\[f(x)=\frac{1}{\pi}\mathrm{Tr}_{1/0}(x\beta).\] The assertion follows from this. 
\end{proof}

The rest of this section concerns the case $n>0$. 
We identify $G=\Delta \times G^1$. 
Assume that $G^1$ is cyclic and fix a generator $\gamma \in G^1$. 
Put $$S^1:=\{\gamma^i \;|\;0\leq i \leq p^{n-1}-1\}.$$ 
Then we take $S$ as $\Delta S^1:=\{\rho \sigma \;|\; \rho \in \Delta, \sigma \in S^1\}$. 
Let $\chi$ be a finite character of conductor $p^{n+1}$ and 
write $\chi=\omega \psi$ 
for $\omega$ 
a character  factoring through $\Delta$ and $\psi$ 
of order $p^{n}$. 
Then 
 \[
 \sum_{\sigma \in S}  \sum_{\tau \in G^n} 
 \mathrm{Tr}_{n+1/0}((\alpha^{\sigma\tau}-\alpha^\sigma)\beta)\chi(\sigma\tau)
 =\sum_{\sigma \in S^1}  \sum_{\tau \in G^n} 
 \mathrm{Tr}_{n+1/0}((\alpha^{\sigma\tau}_\omega-\alpha^\sigma_\omega)\beta)\psi(\sigma\tau). 
 \]

 Note that $\psi(\sigma \tau)-1$ is not a $p$-adic unit. So by Lemma \ref{lem, trace 1}, it 
 suffices to show
 
\begin{equation}\label{equation, strict valuation 2}
v_\pi\left(\sum_{\sigma \in S^1}  \sum_{\tau \in G^n} 
 \mathrm{Tr}_{n+1/0}((\alpha_\omega^{\sigma\tau}-\alpha_\omega^\sigma)\beta)\right)=n+1 
\end{equation}
 for some 
 $\beta$. 
 A key is the following.

\begin{prop}\label{prop, desired alpha}
Put 
\[
X:=\left\{x \in \mathfrak{m}_{L}\;| v_{L}(x)<p\right\}. 
\]
For any $\alpha \in \mathcal{O}_K+X$, there exists $\beta\in\cO_L$ such that  
\[ v_{\pi}\left(\sum_{\sigma \in S^1} 
 \mathrm{Tr}_{n+1/0}(\alpha^\sigma\beta)\right)=n. 
 \]
\end{prop}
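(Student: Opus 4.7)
The plan is to rewrite the sum as a single trace, use trace-different duality to reduce the existence of $\beta$ to an upper bound on $v_L(T\alpha)$ with $T := \sum_{\sigma \in S^1}\sigma$, and extract that bound from the telescoping identity $(\gamma - 1)T = \tau - 1$, where $\tau := \gamma^{p^{n-1}}$ generates $G^n$.

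First I would observe
\[
\sum_{\sigma \in S^1} \mathrm{Tr}_{n+1/0}(\alpha^\sigma \beta) \;=\; \mathrm{Tr}_{n+1/0}(\beta\,T\alpha).
\]
Lemma~\ref{lem, Herbrand} together with the filtration \eqref{item_Ram} yield $e := e_{L/K} = (p-1)p^n$ and, from $d = \sum_u(|G_u|-1)$ applied to the lower ramification groups determined in the proof of Lemma~\ref{lem, Herbrand}, $d := d_{L/K} = p^n((n+1)(p-1)-1)$, so that $(n+1)e - d = p^n$. By trace-different duality, as $\beta$ runs through $\mathcal{O}_L$ the image of $\beta \mapsto \mathrm{Tr}_{n+1/0}(\beta\,T\alpha)$ equals $\pi^{r}\mathcal{O}_K$ with $r = \lfloor (v_L(T\alpha) + d)/e\rfloor$. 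Thus some $\beta$ realizes $v_\pi = n$ iff $r \le n$, equivalently $v_L(T\alpha) < p^n$.

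I would then decompose $\alpha = \alpha_0 + \alpha_1$ with $\alpha_0 \in \mathcal{O}_K$ and $\alpha_1 \in X$. Since $\alpha_0$ is $G^1$-fixed, $T\alpha_0 = p^{n-1}\alpha_0$, giving $v_L(p^{n-1}\alpha_0) \ge (n-1)e$, which strictly exceeds $p^n$ once $n \ge 2$ (as $(n-1)(p-1) \ge 2$). The case $n = 1$ is immediate: $S^1 = \{1\}$, so $T = \mathrm{id}$, and $v_L(\alpha) < p$ follows from $v_L(\alpha_0) \in \{0\}\cup\{m \ge e\}$ together with $v_L(\alpha_1) < p \le e$. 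Hence it suffices to prove $v_L(T\alpha_1) < p^n$ for $\alpha_1 \in X$.

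The engine is the identity in $\cO_K[G^1]$
\[
(\gamma - 1)\,T \;=\; \gamma^{p^{n-1}} - 1 \;=\; \tau - 1,
\]
obtained by telescoping. Write $\alpha_1 = u\varpi_{n+1}^{k}$ with $u \in \mathcal{O}_L^\times$ and $k := v_L(\alpha_1) \in \{1,\ldots,p-1\}$. Since $\tau$ is a nontrivial element of $G^n = G_{p^n-1}$ (Lemma~\ref{lem, Herbrand}), one has $v_L(\tau\varpi_{n+1} - \varpi_{n+1}) = p^n$; binomial expansion of $(\varpi_{n+1} + (\tau\varpi_{n+1} - \varpi_{n+1}))^k$ combined with $p \nmid k$ pins down the \emph{exact} valuation
\[
v_L((\tau - 1)\alpha_1) \;=\; k + p^n - 1.
\]
On the other hand, $\gamma \in G^1 \subseteq G_{p-1}$ and a routine induction on $v_L(y)$ give the universal lower bound $v_L((\gamma-1)y) \ge v_L(y) + (p-1)$ for every $y \in \mathcal{O}_L$. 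Applying this with $y = T\alpha_1$ and invoking the identity,
\[
v_L(T\alpha_1) \;\le\; v_L((\tau - 1)\alpha_1) - (p-1) \;=\; k + p^n - p \;\le\; p^n - 1 \;<\; p^n,
\]
as needed.

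The main obstacle is the \emph{exactness} of $v_L((\tau-1)\alpha_1) = k + p^n - 1$ (rather than a mere inequality), since it is this exactness that upgrades the lower bound on $v_L((\gamma - 1)T\alpha_1)$ to the sharp upper bound on $v_L(T\alpha_1)$. Exactness in turn rests on the binomial leading coefficient $k$ being a unit, i.e., on $p \nmid v_L(\alpha_1)$, which is precisely the role of the defining condition $v_L(\alpha_1) < p$ of the set $X$.
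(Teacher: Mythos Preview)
Your proof is correct but proceeds along a genuinely different route from the paper's. The paper passes to the $k$-vector space $V=\mathcal{O}_L/\mathfrak{m}_L^{p^n}$, proves that the operator $N=\gamma-1$ satisfies $N^{p^{n-1}-1}\alpha\neq 0$ and $N^{p^{n-1}}\alpha=0$ there (Lemma~\ref{lemma, N is nilpotent}), deduces that $\alpha,\alpha^{\gamma},\ldots,\alpha^{\gamma^{p^{n-1}-1}}$ are $k$-linearly independent in $V$, and then uses the non-degenerate trace pairing on $V$ (Lemma~\ref{lemma, non-degenerate}) to pick $\beta$ dual to $\alpha$ and orthogonal to its conjugates, so that $\pi^{-n}\sum_{\sigma\in S^1}\mathrm{Tr}_{n+1/0}(\alpha^{\sigma}\beta)\equiv 1\pmod{\pi}$. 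You instead collapse the sum to $\mathrm{Tr}_{n+1/0}(\beta\,T\alpha)$ and, via $\mathrm{Tr}_{L/K}(\mathfrak{m}_L^{a})=\mathfrak{m}_K^{\lfloor(a+d)/e\rfloor}$, reduce everything to the single estimate $v_L(T\alpha)<p^n$, which you extract from $(\gamma-1)T=\tau-1$ and the ramification bounds $v_L((\tau-1)\alpha_1)=k+p^n-1$, $v_L((\gamma-1)y)\ge v_L(y)+p-1$. Your argument is more direct---it never leaves $\mathcal{O}_L$ and never needs the auxiliary pairing lemma---whereas the paper's linear-independence setup makes the Galois-module structure more visible and dovetails with the resolvent viewpoint that frames the section. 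Both rest on the same ramification input (the exact valuation of $(\tau-1)\alpha_1$ hinging on $p\nmid v_L(\alpha_1)$), just packaged differently.
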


We begin with a couple of preparatory lemmas.  
For $\beta \in \cO_{L}=\mathcal{O}_{K_{n+1}}$, consider the map 
\[
f_\beta: \mathcal{O}_{K_{n+1}} \;\rightarrow\; k, \qquad 
x \;\mapsto\; \pi^{-n} \mathrm{Tr}_{n+1/0}(\beta x) \mod \pi
\]
(cf.~Lemma \ref{lem, Herbrand} ii)). 
Put $\mathfrak{M}_1:=\mathcal{O}_{K_{n+1}}\mathfrak{m}_1
 =\frak{m}_{n+1}^{p^n}$.  
 Let us 
 denote the $k$-vector space $\mathcal{O}_{K_{n+1}}/\mathfrak{M}_1$ by $V$. 

\begin{lem}\label{lemma, non-degenerate}
 The map $f_\beta$ factors through $\mathcal{O}_{K_{n+1}}/\mathfrak{M}_1$. It
 is identically 
 zero if and only if  $\beta\in \mathfrak{M}_1$.  
  In particular, the pairing 
  \[ V \times V \rightarrow k, \quad (x, y) \mapsto 
  \pi^{-n} \mathrm{Tr}_{n+1/0}(xy) \mod \pi
  \]
  is non-degenerate. 
\end{lem}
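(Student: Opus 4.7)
The lemma makes three interlocking assertions: that $f_\beta$ factors through $V$, that $f_\beta \equiv 0$ forces $\beta \in \mathfrak{M}_1$, and that the induced symmetric pairing on $V$ is non-degenerate. My plan is to reduce everything to two trace computations along the tower $K = K_0 \subset K_1 \subset \cdots \subset K_{n+1} = L$, together with the standard characterization of the kernel via the codifferent.

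The first step is to establish, by iterating Lemma \ref{lem, Herbrand} ii), the identities
\[
\mathrm{Tr}_{n+1/0}(\mathcal{O}_L) = \pi^n \mathcal{O}_K, \qquad \mathrm{Tr}_{n+1/0}(\mathfrak{M}_1) = \pi^{n+1}\mathcal{O}_K.
\]
Each wild step $K_{i+1}/K_i$ ($i \geq 1$) contributes $\mathrm{Tr}_{i+1/i}(\mathcal{O}_{K_{i+1}}) = \pi \mathcal{O}_{K_i}$, while the tame step contributes either $\mathrm{Tr}_{1/0}(\mathcal{O}_{K_1}) = \mathcal{O}_K$ or $\mathrm{Tr}_{1/0}(\mathfrak{m}_1) = \pi \mathcal{O}_K$; for the second identity one peels off $\varpi_1$ from $\mathfrak{M}_1 = \varpi_1 \mathcal{O}_L$ through the innermost trace. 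The first identity makes $f_\beta$ well-defined as a $k$-valued map on $\mathcal{O}_L$, while the second, combined with the trivial inclusion $\beta \mathfrak{M}_1 \subseteq \mathfrak{M}_1$, shows that $f_\beta$ annihilates $\mathfrak{M}_1$ for every $\beta \in \mathcal{O}_L$. This yields both the factoring through $V$ and the easy half of the kernel claim.

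For the converse direction, by the defining property of the codifferent, $f_\beta \equiv 0$ is equivalent to $\beta \in \pi^{n+1}\mathcal{D}_{L/K}^{-1} \cap \mathcal{O}_L$; writing $\mathcal{D}_{L/K} = \mathfrak{m}_L^d$ and $e = (p-1)p^n$, this becomes $v_L(\beta) \geq (n+1)e - d$. The needed identity $(n+1)e - d = p^n$ I would prove by computing $d$ step by step via the tower formula, using the wild-step exponents $v_{K_{i+1}}(\mathcal{D}_{K_{i+1}/K_i}) = (p-1)p^i$ for $i \geq 1$ (which is extracted inside the proof of Lemma \ref{lem, Herbrand} ii)) and the tame value $v_{K_1}(\mathcal{D}_{K_1/K}) = p-2$; this delivers $d = [n(p-1) + (p-2)]p^n$, and hence $(n+1)e - d = p^n$ as required. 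Non-degeneracy of the pairing is then formal: the induced $k$-linear map $V \to \mathrm{Hom}_k(V, k)$, $\beta \mapsto f_\beta$, is injective between $k$-spaces of common dimension $p^n$, and so an isomorphism. The main obstacle is the different computation; it is entirely elementary but requires careful tame/wild bookkeeping. As a safety net, one can sidestep the codifferent by observing that $V = \mathcal{O}_L/\mathfrak{m}_L^{p^n}$ is a local Artinian chain ring with simple socle $\mathfrak{m}_L^{p^n-1}/\mathfrak{M}_1$, reducing non-degeneracy to the statement $\mathrm{Tr}_{n+1/0}(\mathfrak{m}_L^{p^n-1}) \not\subseteq \pi^{n+1}\mathcal{O}_K$, which is of the same nature as the two trace identities above and is amenable to the same tower induction.
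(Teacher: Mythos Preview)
Your proof is correct. The paper's own argument is precisely what you describe as the ``safety net'': it establishes $\mathrm{Tr}_{n+1/0}(\mathfrak{M}_1)=\pi^{n+1}\mathcal{O}_K$ and $\mathrm{Tr}_{n+1/0}(\mathfrak{m}_{n+1}^{p^n-1})=\pi^{n}\mathcal{O}_K$ by iterating the step-by-step trace formula $\mathrm{Tr}_{i+1/i}\mathfrak{m}_{i+1}^{p^b}=\pi\mathfrak{m}_i^{p^{b-1}}$ (respectively with exponent $p^b-1$), and the second identity is exactly what yields non-degeneracy via the socle of $V$. Your primary route through the codifferent is a genuine alternative: rather than computing one further trace image, you assemble the total different exponent $d=p^n[n(p-1)+(p-2)]$ from the tower and read off the kernel directly as $\mathfrak{m}_L^{(n+1)e-d}=\mathfrak{m}_L^{p^n}=\mathfrak{M}_1$. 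This is arguably more conceptual---it identifies the kernel in one stroke instead of via injectivity-plus-dimension-count---but it requires separate bookkeeping for the tame step and the arithmetic verification $(n+1)e-d=p^n$, whereas the paper's extra trace identity falls out of the same recursion already in play. Both approaches rest on the same ramification data from Lemma~\ref{lem, Herbrand}.
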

\begin{proof}
By \cite[Ch.~V, ~\S3, Lem.~4]{Se}, we have
\[
\mathrm{Tr}_{i+1/i} \frak{m}_{i+1}^a=\frak{m}_i^{\lfloor\frac{a+p^{i}(p-1)}{p}\rfloor}. 
\]
In particular, 
\[
\mathrm{Tr}_{i+1/i} \frak{m}_{i+1}^{p^{b}}=\frak{m}_i^{p^{b-1}+p^{i-1}(p-1)}
=\pi\frak{m}_i^{p^{b-1}}. 
\]
Hence 
\[
\mathrm{Tr}_{n+1/0} \mathfrak{M}_1=
\mathrm{Tr}_{1/0}\mathrm{Tr}_{n+1/1} \frak{m}_{n+1}^{p^{n}}=\pi^n\mathrm{Tr}_{1/0}\frak{m}_1
=\pi^{n+1}\mathcal{O}_K. 
\]
Similarly, 
\[
\mathrm{Tr}_{i+1/i} \frak{m}_{i+1}^{p^b-1}=\pi\frak{m}_i^{p^{b-1}-1}
\]
and hence 
\[
\mathrm{Tr}_{n+1/0} \frak{m}_{n+1}^{p^{n}-1}
=\pi^{n}\mathcal{O}_K. 
\]
\end{proof}

Let $T$ be the 
$k$-linear operator on $V$ induced by $\gamma$.

\begin{lem}\label{lemma, N is nilpotent}
Put $N=T-1$. Then for $\alpha \in \mathcal{O}_K+X$,  we have 
\[
N^{p^{n-1}-1} \alpha \not=0, \quad N^{p^{n-1}}\alpha =0.
\]
\end{lem}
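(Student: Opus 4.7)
The plan is to deduce both assertions from a single integral computation of $v_{L}\!\bigl((\gamma-1)^{p^{n-1}}\alpha\bigr)$ in $\mathcal{O}_{L}$, lifting the identity $(\gamma-1)^{p^{n-1}}\equiv \gamma^{p^{n-1}}-1$ in $\mathbb{F}_{p}[\gamma]$ to $\mathbb{Z}[\gamma]$ with explicit control on the $p$-error. Since $\gamma-1$ annihilates $\mathcal{O}_{K}$, writing $\alpha=\alpha_{0}+\alpha_{1}$ with $\alpha_{0}\in\mathcal{O}_{K}$ and $\alpha_{1}\in X$ reduces the problem to $\alpha=\alpha_{1}\in X$. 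Fixing a uniformizer $\varpi$ of $L$, I write $\alpha=\varpi^{k}u$ with $u\in\mathcal{O}_{L}^{\times}$ and $1\leq k\leq p-1$.

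Because $p$ divides $\binom{p^{n-1}}{j}$ for $0<j<p^{n-1}$, there is an integral identity
\[
(\gamma-1)^{p^{n-1}} \;=\; (\gamma^{p^{n-1}}-1) \;+\; p\cdot A \quad\text{in } \mathbb{Z}[\gamma]
\]
for some $A\in\mathbb{Z}[\gamma]$; the correction satisfies $v_{L}(pA\alpha)\geq v_{L}(p)+k\geq (p-1)p^{n}+k$, comfortably above the target $p^{n}$. By Lemma~\ref{lem, Herbrand} i) applied with $i=n+1$, the generator $\gamma^{p^{n-1}}$ of $G^{n}$ lies in $G_{p^{n}-1}\setminus G_{p^{n}}$, so $v_{L}(\gamma^{p^{n-1}}\varpi-\varpi)=p^{n}$ exactly. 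A binomial expansion of $(\gamma^{p^{n-1}}-1)\varpi^{k}$ using $p\nmid k$ yields the leading term $k\varpi^{k-1}(\gamma^{p^{n-1}}\varpi-\varpi)$ of valuation $k+p^{n}-1$, while a twisted Leibniz expansion shows the unit-contribution $\varpi^{k}(\gamma^{p^{n-1}}-1)u$ has $v_{L}\geq k+p^{n}$ (since $(\gamma^{p^{n-1}}-1)\mathcal{O}_{L}\subset\mathfrak{m}_{L}^{p^{n}}$). Combining,
\[
v_{L}\!\bigl((\gamma-1)^{p^{n-1}}\alpha\bigr) \;=\; k+p^{n}-1 \;\geq\; p^{n},
\]
which establishes the first claim $N^{p^{n-1}}\alpha\in\mathfrak{M}_{1}$.

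For the second claim I combine the above with the elementary ramification shift
\[
v_{L}\!\bigl((\gamma-1)y\bigr) \;\geq\; v_{L}(y)+(p-1) \quad\text{for every } y\in\mathcal{O}_{L},
\]
valid because $\gamma\in G_{p-1}$. Since $v_{L}((\gamma-1)^{p^{n-1}}\alpha)=k+p^{n}-1$ is finite, the iterate $y:=N^{p^{n-1}-1}\alpha$ is nonzero in $\mathcal{O}_{L}$, and the shift inequality gives
\[
v_{L}(N^{p^{n-1}-1}\alpha) \;\leq\; v_{L}(N^{p^{n-1}}\alpha)-(p-1) \;=\; k+p^{n}-p \;\leq\; p^{n}-1 \;<\; p^{n},
\]
so $N^{p^{n-1}-1}\alpha\notin\mathfrak{M}_{1}$. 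The main obstacle is establishing the exact equality $v_{L}((\gamma^{p^{n-1}}-1)\varpi^{k})=k+p^{n}-1$ (rather than just $\geq$): this rests both on the sharp identification $\gamma^{p^{n-1}}\in G_{p^{n}-1}\setminus G_{p^{n}}$ from Lemma~\ref{lem, Herbrand} i) and on the arithmetic constraint $1\leq k\leq p-1$ so that $p\nmid k$. Relaxing either would allow the valuation of $N^{p^{n-1}}\alpha$ to jump upward and collapse the upper bound in the second step.
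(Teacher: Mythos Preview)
Your proof is correct and takes essentially the same approach as the paper: both reduce to $\alpha\in X$, use the Frobenius congruence $(\gamma-1)^{p^{n-1}}\equiv\gamma^{p^{n-1}}-1\pmod p$ together with the lower ramification jump $\gamma^{p^{n-1}}\in G_{p^n-1}\setminus G_{p^n}$ to pin down $v_L\bigl((\gamma-1)^{p^{n-1}}\alpha\bigr)=k+p^n-1$, and then derive $N^{p^{n-1}-1}\alpha\neq 0$ from a one-step ramification shift for $\gamma-1$. The only cosmetic difference is that the paper invokes the slightly sharper inclusion $(\gamma-1)\mathfrak{M}_1\subset\varpi_{n+1}^{p}\mathfrak{M}_1$ (using that $\varpi_1\in K_1$ is $\gamma$-fixed) in place of your general shift $v_L((\gamma-1)y)\geq v_L(y)+(p-1)$.
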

\begin{proof}
 Note that $N^{p^{n-1}}\varpi_{n+1}= (T^{p^{n-1}}-1)\varpi_{n+1}$ since $V$ is a $k$-vector space. 
By definition of the lower ramification groups, 
we have 
$\gamma\varpi_{n+1}=\varpi_{n+1}+u\varpi_{n+1}^p$ and 
 $\gamma^{p^{n-1}}\varpi_{n+1}=\varpi_{n+1}+v\varpi_{1}$ 
for $p$-adic units $u$, $v$.

Clearly, 
it suffices to prove the lemma for $\alpha\in X$. Pick $x\in X$ and write $v_{K_{n+1}}(x)=i<p$.
 By the previous paragraph, 
 $$(\gamma^{p^{n-1}}-1)x \in \varpi^{i-1}_{n+1}\mathfrak{M}_1
 \setminus  \varpi^{i}_{n+1}\mathfrak{M}_1$$ and 
 $(\gamma-1)\mathfrak{M}_1 \subset \varpi_{n+1}^p\mathfrak{M}_1$.  
 In particular, $N^{p^{n-1}}x=0$. 
 
 Suppose that $(\gamma-1)^{p^{n-1}-1} x \in \mathfrak{M}_1$.  
Then 
we have $(\gamma-1)^{p^{n-1}}x \in \varpi_{n+1}^p\mathfrak{M}_1$. 
This contradicts the fact that $(\gamma^{p^{n-1}}-1)x$ generates $\varpi^{i-1}_{n+1}\mathfrak{M}_1$. 
\end{proof}

\begin{proof}[Proof of Proposition \ref{prop, desired alpha}]
Let $\alpha$ be an element of $\mathcal{O}_K+X$.  
By Lemma \ref{lemma, N is nilpotent}, 
$\{N^{j}\alpha| 0\leq j \leq p^{n-1}-1\}$ is a linearly independent subset of $V$. 
In turn so is 
$\{T^j\alpha| 0 \leq j \leq p^{n-1}-1\}$. 
Hence there is a $k$-linear map $f: V \rightarrow k$ such that 
$f(\alpha)=1$ and $f(\alpha^{\gamma^j})=0$ 
for  $1\leq j \leq p^{n-1}-1$. 

In view of Lemma \ref{lemma, non-degenerate}, we may write 
\[
f(x)=\pi^{-n}\mathrm{Tr}_{n+1/0} (\beta x)
\] for some $\beta$. 
So 
\begin{align*}
  \pi^{-n} \sum_{\sigma \in S^1} 
\mathrm{Tr}_{n+1/0}(\alpha^\sigma\beta) = \sum_{j=0}^{p^{n-1}-1} 
 \pi^{-n}\mathrm{Tr}_{n+1/0}(\alpha^{ \gamma^j}\beta)\equiv 
\sum_{j=0}^{p^{n-1}-1}  f(\alpha^{\gamma^j})\equiv 1 \mod \pi.
\end{align*}
\end{proof}

We now return to Theorem \ref{thm, lower bound gauss}.
\begin{proof}[Proof of Theorem \ref{thm, lower bound gauss} iii)]
It is sufficient to find $\alpha$ and $\beta$ satisfying (\ref{equation, strict valuation 2}). 

First, consider the case $\omega=1$. We may assume that $\alpha \in \mathcal{O}_{L}^\Delta$ 
with $v_{L}(\alpha)<p$. 
Write $\mathrm{Tr}_{n+1/0}\alpha=p^n(p-1)a$ for $a \in \mathcal{O}_K$. 
Since $\langle \alpha| \chi \rangle=\langle \alpha-a|\chi \rangle$, 
we may 
replace $\alpha$ by 
$\alpha-a$.
 Then $\alpha \in  \mathcal{O}_K+X$ and 
$\mathrm{Tr}_{n+1/0}(\alpha)=0$. Take $\beta$ as in Proposition \ref{prop, desired alpha}. Then 
\begin{align*}
\sum_{\sigma \in S^1}  \sum_{\tau \in G^n} 
 \mathrm{Tr}_{n+1/0}((\alpha^{\sigma\tau}-\alpha^\sigma)\beta)
&=\sum_{\sigma \in S^1}  \sum_{\tau \in G^n} 
 \mathrm{Tr}_{n+1/0}(\alpha^{\sigma\tau}\beta)-p
 \sum_{\sigma \in S^1} 
 \mathrm{Tr}_{n+1/0}(\alpha^\sigma\beta)\\
 &=\mathrm{Tr}_{n+1/0}(\alpha) \mathrm{Tr}_{n+1/0}(\beta) 
  -p\sum_{\sigma \in S^1} 
 \mathrm{Tr}_{n+1/0}(\alpha^\sigma\beta)\\
 &=
  -p\sum_{\sigma \in S^1} 
 \mathrm{Tr}_{n+1/0}(\alpha^\sigma\beta). 
\end{align*}

Hence the assertion is a consequence of Proposition \ref{prop, desired alpha} and (\ref{equation, trace reduction}). 
(Note that if $n>1$, then the modification of $\alpha$ is inessential since 
$v_{\pi}(\mathrm{Tr}_{n+1/0}(\pi_{n+1}) \mathrm{Tr}_{n+1/0}(\beta))\geq 2n \geq n+2$.)

Now suppose that $\omega\not=1$. 
Then we have $\mathrm{Tr}_{n+1/0}(\alpha_\omega)=0$ and the assertion follows from the same argument 
as in the case $\omega=1$. 
\end{proof}

\section{The ramification group and uniformizers}\label{ram}

In this section we show 
the following existence of a system of 
uniformizers in a totally ramified $\BZ_p$-extension of an unramified field.

\begin{thm}\label{thm, frob uniformizer}
Let $p$ be an odd prime. Let $\Psi$ be an unramified extension of $\BQ_p$ 
with integer ring $\mathcal{O}$. 
Let $\Psi_{\infty}/\Psi$ be a totally ramified $\BZ_p$-extension 
and $R_n$ the integer ring of the $n$-th layer $\Psi_n$. 
Then there exists  a system of  uniformizers $(\pi_n)_n$ of $(R_n)_n$ 
such that 
\[
\pi_{n+1}^p\equiv \pi_n \mod pR_{n+1}.
\]
\end{thm}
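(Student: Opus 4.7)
The plan is an induction on $n$. First I would start with any uniformizer $\pi_0 \in R_0$; given $\pi_n$, I then seek a uniformizer $\pi_{n+1} \in R_{n+1}$ with $\pi_{n+1}^p \equiv \pi_n \pmod{pR_{n+1}}$. Setting $\bar R := R_{n+1}/pR_{n+1}$, this amounts to showing that the reduction $\bar\pi_n$ lies in the image of the Frobenius map $F \colon \bar R \to \bar R$, $x \mapsto x^p$, which is well-defined and additive because $\bar R$ has characteristic $p$.

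The key input is that, because $\Psi$ is unramified over $\BQ_p$ and $\Psi_{n+1}/\Psi$ is totally ramified of degree $p^{n+1}$, one has $pR_{n+1} = \mathfrak m_{n+1}^{p^{n+1}}$; moreover $R_{n+1} = \cO[\varpi]$ for any uniformizer $\varpi \in R_{n+1}$, so $\bar R$ is free of rank $p^{n+1}$ over the common residue field $k$. The valuation $v_{n+1}$ descends to a well-defined function $\bar R \setminus \{0\} \to \{0,1,\ldots,p^{n+1}-1\}$, and the set $V \subset \bar R$ of elements $\bar x$ with $v_{n+1}(\bar x)$ divisible by $p$ (together with $0$) is a $k$-subspace of dimension $p^n$, with basis $\{1, \bar\varpi^p, \ldots, \bar\varpi^{(p^n-1)p}\}$.

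Next I would compare two natural $k$-subspaces of $\bar R$ that sit inside $V$. First, $\ker F = \mathfrak m_{n+1}^{p^n}/pR_{n+1}$ has $k$-dimension $p^n(p-1)$, so $F(\bar R)$ has $k$-dimension $p^n$; since $v_{n+1}(x^p) = p\,v_{n+1}(x)$, the image lies in $V$, and hence by dimensions $F(\bar R) = V$. Second, the inclusion $R_n \hookrightarrow R_{n+1}$ induces a $k$-linear injection $R_n/pR_n \hookrightarrow \bar R$---indeed, if $x \in R_n \cap pR_{n+1}$ then $v_{n+1}(x) \geq p^{n+1}$, and the relation $v_{n+1}|_{R_n} = p\cdot v_n$ forces $x \in pR_n$---whose image has $k$-dimension $p^n$ and is contained in $V$ (since $v_{n+1}(\pi_n^i) = pi$), hence also equals $V$.

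Combining, $\bar\pi_n \in V = F(\bar R)$, so there exists $\bar y \in \bar R$ with $\bar y^p = \bar\pi_n$; comparing valuations forces $v_{n+1}(\bar y) = 1$. Any lift $\pi_{n+1} \in R_{n+1}$ of $\bar y$ is then a uniformizer of $R_{n+1}$ satisfying $\pi_{n+1}^p \equiv \pi_n \pmod{pR_{n+1}}$, and the induction closes. The crux is the twin dimension count identifying both $F(\bar R)$ and the image of $R_n/pR_n$ with the same subspace $V$; the rest is routine.
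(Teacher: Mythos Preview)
Your argument has a genuine gap at the step you call the crux: the set $V=\{\bar x\in\bar R:\ p\mid v_{n+1}(\bar x)\}\cup\{0\}$ is \emph{not} a $k$-subspace of $\bar R=R_{n+1}/pR_{n+1}\cong k[t]/(t^{p^{n+1}})$ (with $t=\bar\varpi$). For instance both $t^p$ and $t^p+t^{p+1}$ lie in $V$, yet their difference $t^{p+1}$ does not. What your computation actually shows is that $F(\bar R)$ equals the $k$-span $V'=\langle 1,t^p,\dots,t^{(p^n-1)p}\rangle$, one particular $p^n$-dimensional subspace whose nonzero elements happen to lie in the set $V$; but there are many such subspaces, e.g.\ $\langle 1,\,t^p+t^{p+1},\,t^{2p},\dots,t^{(p^n-1)p}\rangle$. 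Knowing only that the image of $R_n/pR_n$ is a $p^n$-dimensional subspace whose nonzero elements have valuation divisible by $p$ does \emph{not} force it to coincide with $V'$. In other words, the assertion $\bar\pi_n\in F(\bar R)$ is exactly the statement to be proved at this level, and the dimension count alone does not establish it. (A secondary point: $F$ is only $\BF_p$-linear, not $k$-linear, so the rank--nullity step needs a word; since $k$ is perfect this is easily repaired and is not the real obstruction.)

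The paper supplies the missing ingredient via ramification theory. Starting from a norm-compatible system of uniformizers, the Eisenstein minimal polynomial $f(X)=X^p+a_{p-1}X^{p-1}+\cdots+a_1X-\pi_n$ of $\pi_{n+1}$ over $R_n$ satisfies $p\mid a_i$ for $1\le i\le p-1$ if and only if the different $\fD_{n+1/n}=(f'(\pi_{n+1}))$ lies in $pR_{n+1}$. The valuation of $\fD_{n+1/n}$ is determined by the upper ramification filtration of $\Gal(\Psi_{n+1}/\Psi)$, which is the same for every totally ramified $\BZ_p$-extension of an unramified base (Proposition~\ref{prop, ramification groups of Z_p-ext}); one then reduces to the cyclotomic $\BZ_p$-extension, where the inclusion is classical. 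This bound on the different is precisely what pins down the image of $R_n/pR_n$ inside $\bar R$, and it cannot be replaced by linear algebra in $\bar R$ alone.
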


We begin with a preliminary reduction. 

By local class field theory, $\Psi_\infty$ is contained in a Lubin-Tate extension of $\Psi$ arising from a uniformizer $\varpi$, which is universal norm for $\Psi_\infty$.
Put $\pi_0:=\varpi$
and pick a norm compatible sequence $(\pi_{n})_{n}$
for $\pi_n$ a uniformizer of $R_n$.
Since $\Psi_{n+1}/\Psi_n$ is totally ramified, there exists a monic Eisenstein polynomial 
$$f(x)=\sum_{i=0}^p a_ix^i \in R_{n}[x]$$ of degree $p$ 
such that $f(\pi_{n+1})=0$. 
Note that $a_{0}=-\pi_{n}$. 
To prove  Theorem \ref{thm, frob uniformizer}, 
it thus suffices to show that 
all but the constant and leading coefficients of  
$f(x)$ are divisible by $p$, 
i.e.
$f'(x) \in pR_{n+1}[x]$. 
Write $\fD_{n+1/n}$ for the different of $\Psi_{n+1}/\Psi_n$.

\begin{lem}
We have $f'(x) \in pR_{n+1}[x]$ if and only if $\mathfrak{D}_{n+1/n} \subset pR_{n+1}$. 
\end{lem}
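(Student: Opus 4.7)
The plan is to use the standard monogenic description of the different. Because $\Psi_{n+1}/\Psi_n$ is totally ramified of degree $p$ with uniformizer $\pi_{n+1}$, we have $R_{n+1}=R_n[\pi_{n+1}]$, and $f$ is the minimal polynomial of $\pi_{n+1}$ over $R_n$. Hence $\fD_{n+1/n}=(f'(\pi_{n+1}))$, and the ``only if'' direction of the lemma is immediate: evaluating $f'(x)\in pR_{n+1}[x]$ at $x=\pi_{n+1}$ gives $f'(\pi_{n+1})\in pR_{n+1}$.

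For the converse, I would normalize $v_{n+1}$ so that $v_{n+1}(\pi_{n+1})=1$. Since $\Psi/\BQ_p$ is unramified and $\Psi_{n+1}/\Psi$ is totally ramified of degree $p^{n+1}$, this gives $v_{n+1}(p)=p^{n+1}$. Writing $f(x)=\sum_{i=0}^{p}a_i x^i$ with $a_p=1$ and $a_0=-\pi_n$ a uniformizer, the condition $f'(x)\in pR_{n+1}[x]$ amounts (since each $i$ with $1\leq i\leq p-1$ is a $p$-adic unit, and $R_n\cap pR_{n+1}=pR_n$) to showing $a_i\in pR_n$ for $1\leq i\leq p-1$; the $i=p$ coefficient is just $p$.

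The key observation is the valuation computation
$$ v_{n+1}(i a_i\pi_{n+1}^{i-1})=p\cdot v_n(a_i)+(i-1)\qquad (1\leq i\leq p-1), $$
together with $v_{n+1}(p\,\pi_{n+1}^{p-1})=p^{n+1}+p-1$. The residues mod $p$ of these $p$ quantities are $0,1,\ldots,p-1$, so the summands of $f'(\pi_{n+1})=\sum_{i=1}^{p} i a_i\pi_{n+1}^{i-1}$ have pairwise distinct $v_{n+1}$-valuations, whence $v_{n+1}(f'(\pi_{n+1}))$ equals their minimum. The hypothesis $\fD_{n+1/n}\subset pR_{n+1}$ then forces $p\cdot v_n(a_i)+(i-1)\geq p^{n+1}$ for each $1\leq i\leq p-1$, which, since $v_n(a_i)\in\BZ$ and $0\leq i-1<p$, yields $v_n(a_i)\geq p^n=v_n(p)$; thus $a_i\in pR_n$, completing the argument.

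The substantive point is the ``distinct residues mod $p$'' observation, which is exactly where the Eisenstein form of $f$ and the unramifiedness of $\Psi/\BQ_p$ are used; beyond that the argument is a routine inequality chase, so I do not anticipate any serious obstacle.
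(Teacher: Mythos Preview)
Your argument is correct and rests on the same key identity $\fD_{n+1/n}=(f'(\pi_{n+1}))$ as the paper. The paper's version of the converse is slightly slicker: since $\{\pi_{n+1}^{i}\mid 0\le i\le p-1\}$ is an $R_n$-basis of $R_{n+1}$, one has $pR_{n+1}=\bigoplus_{i=0}^{p-1} pR_n\cdot\pi_{n+1}^i$, so $f'(\pi_{n+1})=\sum_{i=1}^{p} i a_i\pi_{n+1}^{i-1}\in pR_{n+1}$ holds if and only if $ia_i\in pR_n$ for each $i$, i.e.\ $p\mid a_i$ for $1\le i\le p-1$. This bypasses your valuation computation entirely and, in particular, does not invoke the unramifiedness of $\Psi/\BQ_p$ (your ``distinct residues mod $p$'' step and the final inequality $v_n(a_i)\ge p^n=v_n(p)$ both use $v_{n+1}(p)=p^{n+1}$). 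Your route is of course valid in the present setting, but the basis argument is the more natural one here and is what your valuation observation is really encoding.
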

\begin{proof}
 Note that  $\mathfrak{D}_{n+1/n}=(f'(\pi_{n+1}))$ and 
 $$\{\pi_{n+1}^{i}| 0 \leq i \leq p-1\}$$ is a basis of 
 the $R_n$-module $R_{n+1}$.  
 Since $f'(\pi_{n+1})=\sum_{i=1}^{p} ia_i \pi_{n+1}^{i-1}$, 
 it follows that $f'(\pi_{n+1}) \in pR$ if and only if $p | a_i$ for all $1\leq i \leq p-1$.
\end{proof}

Our approach relies on the following 
 (cf.~\cite[Prop.~3.3]{Ha77}).
\begin{prop}\label{prop, ramification groups of Z_p-ext}
 The upper ramification filtration of $\Gamma_n:=\mathrm{Gal}(\Psi_n/\Psi)$ is 
 given by 
\[
\Gamma_n=\Gamma_n^{-1}=\Gamma^0=\Gamma_n^1 \supsetneq \cdots \supsetneq \Gamma_n^n\supsetneq \Gamma_n^{n+1}=\{1\}
\] 
with $\Gamma^i_n/\Gamma^{i+1}_n$ of order $p$ for $1\leq i \leq n$.
\end{prop}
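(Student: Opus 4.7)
The plan is to prove Proposition \ref{prop, ramification groups of Z_p-ext} by transporting the upper ramification filtration on $\Gamma_n$ to the standard unit filtration on $\cO^\times$ via local class field theory, using crucially that $\Psi/\BQ_p$ is unramified. First I would invoke the local Artin reciprocity map $\mathrm{rec}_\Psi\colon \Psi^\times \to \Gal(\Psi^{\mathrm{ab}}/\Psi)$ and compose with the projection to $\Gamma_\infty := \Gal(\Psi_\infty/\Psi)$. Since $\Psi_\infty/\Psi$ is totally ramified, $\Gamma_\infty$ has no unramified quotient and the restriction of the resulting map to $\cO^\times$ is already surjective onto $\Gamma_\infty$; projecting further yields a surjection $\psi_n \colon \cO^\times \twoheadrightarrow \Gamma_n \cong \BZ_p/p^n\BZ_p$.

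The crucial input is the classical compatibility between LCFT and ramification (Serre, \emph{Local Fields}, Ch.~XV): for each integer $i \geq 0$, the upper numbering ramification group $\Gamma_n^i$ equals $\psi_n(U^{(i)})$, where $U^{(0)} := \cO^\times$ and $U^{(i)} := 1 + \fm_\Psi^i$ for $i \geq 1$. Since $\Psi/\BQ_p$ is unramified, $\fm_\Psi = p\cO$, so $U^{(i)} = 1 + p^i\cO$. Because $U^{(0)}/U^{(1)} \cong k^\times$ has order prime to $p$ while $\Gamma_n$ is a pro-$p$ group, one has $\psi_n(U^{(0)}) = \psi_n(U^{(1)})$, giving $\Gamma_n^0 = \Gamma_n^1 = \Gamma_n$. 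For $i \geq 1$, since $p$ is odd, the $p$-adic logarithm gives an isomorphism of topological $\BZ_p$-modules $\log \colon U^{(1)} \isoarrow p\cO$ carrying $U^{(i)}$ onto $p^i\cO$. Transporting $\psi_n|_{U^{(1)}}$ through $\log$ produces a $\BZ_p$-linear surjection $\wt\psi_n \colon p\cO \twoheadrightarrow \BZ_p/p^n\BZ_p$, and since $p^i\cO = p^{i-1}\cdot p\cO$ with $\wt\psi_n$ being $\BZ_p$-linear, $\wt\psi_n(p^i\cO) = p^{i-1}\BZ_p/p^n\BZ_p$ for $1 \leq i \leq n+1$. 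Consequently $\Gamma_n^i = p^{i-1}\BZ_p/p^n\BZ_p$ for $1 \leq i \leq n$ and $\Gamma_n^{n+1} = \{1\}$, whence $|\Gamma_n^i/\Gamma_n^{i+1}| = p$ on the claimed range.

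The main obstacle is essentially bookkeeping: fixing the LCFT normalizations (arithmetic versus geometric Frobenius, whether $\cO^\times$ corresponds to inertia) so that the Serre compatibility statement can be invoked verbatim, and confirming the surjectivity of $\psi_\infty$ on $\cO^\times$. Once these are in place, the log isomorphism reduces the image computation to a direct application of $\BZ_p$-linearity. An alternative and more concrete route, aligned with the paragraph following the proposition, is to embed $\Psi_\infty$ in the Lubin--Tate tower attached to any uniformizer of $\Psi$ (existing by LCFT as $\Psi_\infty/\Psi$ is totally ramified and abelian) and read off the ramification filtration from the explicit description of Lubin--Tate extensions; this is the viewpoint in \cite{Ha77}, \cite{Maus}, \cite{Wy}.
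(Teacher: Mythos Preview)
Your proof is correct. Note that the paper does not actually prove this proposition; it merely cites \cite[Prop.~3.3]{Ha77} and moves on. Your approach via the reciprocity map and Serre's compatibility $\Gamma_n^i = \psi_n(U^{(i)})$, followed by transport through the $p$-adic logarithm $U^{(1)}\isoarrow p\cO$ and $\BZ_p$-linearity of the resulting surjection onto $\BZ_p/p^n\BZ_p$, is clean and self-contained; it needs only standard LCFT and the structure of principal units in an unramified extension of $\BQ_p$. Hazewinkel's treatment in \cite{Ha77} works instead through norm maps for Lubin--Tate formal groups and extracts the ramification jumps from the formal-group side, essentially the alternative you sketch in your final paragraph. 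Your route is arguably more elementary in that it bypasses the formal-group machinery, at the cost of invoking the LCFT--ramification compatibility as a black box. The bookkeeping concerns you flag (surjectivity of $\psi_n|_{\cO^\times}$ from total ramification, and the LCFT normalization) are routine and pose no real obstacle.
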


\begin{proof}[Proof of Theorem \ref{thm, frob uniformizer}]
By Proposition \ref{prop, ramification groups of Z_p-ext}, the gap sequence does not depend on 
the choice of the totally ramified $\mathbb{Z}_p$-extension $\Psi_\infty$.
Since the valuation of the different is determined by 
the gap sequence, it is also independent of the choice.
So
it suffices to check $\mathfrak{D}_{n+1/n} \subset pR_{n+1}$
for the cyclotomic $\BZ_p$-extension. 
Hence Theorem \ref{thm, frob uniformizer} follows from 
the case of the cyclotomic $\mathbb{Z}_p$-extension. 
\end{proof}

\begin{cor}\label{cor, frob uniformizer}
Let $\varpi_{n+1}$ be any uniformizer of $R_{n+1}$. 
Then $\varpi_{n+1}^p \in pR_{n+1}+R_n$. 
\end{cor}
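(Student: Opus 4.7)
The plan is to prove the slightly stronger statement that $x^p \in pR_{n+1} + R_n$ for \emph{every} $x \in R_{n+1}$, not merely for uniformizers, and to deduce this directly from Theorem~\ref{thm, frob uniformizer} by a ``freshman's dream'' computation modulo $p$. First I would invoke Theorem~\ref{thm, frob uniformizer} to fix a norm-compatible system of uniformizers $(\pi_m)_m$ with $\pi_{m+1}^p \equiv \pi_m \pmod{pR_{m+1}}$. Since $\Psi_{n+1}/\Psi_n$ is totally ramified of degree $p$ with Eisenstein element $\pi_{n+1}$, the set $\{1,\pi_{n+1},\ldots,\pi_{n+1}^{p-1}\}$ is an $R_n$-basis of $R_{n+1}$, so any $x \in R_{n+1}$ can be written uniquely as $x = \sum_{i=0}^{p-1} a_i \pi_{n+1}^i$ with $a_i \in R_n$.

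Next I would expand $x^p$ using the multinomial theorem. Since $p$ divides all intermediate multinomial coefficients, one obtains
\[
x^p \equiv \sum_{i=0}^{p-1} a_i^p\, \pi_{n+1}^{ip} \pmod{pR_{n+1}}.
\]
Writing $\pi_{n+1}^p = \pi_n + pt$ for some $t \in R_{n+1}$, a short induction on $i$ gives $\pi_{n+1}^{ip} \equiv \pi_n^i \pmod{pR_{n+1}}$ (each cross term $(pt)\cdot(\,\cdot\,)$ lies in $pR_{n+1}$). Substituting back yields
\[
x^p \equiv \sum_{i=0}^{p-1} a_i^p\, \pi_n^i \pmod{pR_{n+1}},
\]
and the right-hand side visibly lies in $R_n$. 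Specializing $x = \varpi_{n+1}$ to any uniformizer gives the corollary.

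There is no genuine obstacle here: once Theorem~\ref{thm, frob uniformizer} provides a single uniformizer $\pi_{n+1}$ with the Frobenius-type congruence, the transition to an arbitrary element (hence an arbitrary uniformizer) is forced by the multiplicative structure of the $p$-th power map modulo $p$. If anything, the only point demanding care is keeping track of the congruence $\pi_{n+1}^{ip} \equiv \pi_n^i$ modulo $pR_{n+1}$ (rather than only modulo $p R_n$), but this is immediate from the binomial expansion inside the larger ring $R_{n+1}$.
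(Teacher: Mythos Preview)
Your proof is correct and follows essentially the same approach as the paper: invoke Theorem~\ref{thm, frob uniformizer} for a system $(\pi_m)_m$, expand the given element in powers of $\pi_{n+1}$, apply the freshman's dream modulo $p$, and use $\pi_{n+1}^{ip}\equiv\pi_n^i \pmod{pR_{n+1}}$. The only cosmetic difference is that the paper uses the $\pi_{n+1}$-adic expansion $\varpi_{n+1}=\sum_{i\ge 1}a_i\pi_{n+1}^i$ with $a_i\in\mathcal{O}$ (coefficients in the unramified base), whereas you use the finite $R_n$-basis $\{1,\pi_{n+1},\ldots,\pi_{n+1}^{p-1}\}$; your version has the minor bonus of giving the statement for all $x\in R_{n+1}$ at once.
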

\begin{proof}
Take $(\pi_m)_m$ to be a system of uniformizers as in Theorem \ref{thm, frob uniformizer}. 
 Write $\varpi_{n+1}=\sum_{i=1}^\infty a_i \pi_{n+1}^i$ 
 for $a_i \in \mathcal{O}$. 
 Then the assertion follows from Theorem \ref{thm, frob uniformizer}. 
\end{proof}

\section{The valuation of $\delta_{\chi}$}\label{section: coates-wiles}
This section 
determines the valuation of generalized Gauss sum $\delta_{\chi}$ 
(cf.~Theorem \ref{theorem, delta valuation}).

\subsection{The set-up} 
Let $p\ge 5$ be a prime. 
Let $\Phi $ be the unramified quadratic extension of $\Q_p$ 
and $\CO$ the integer ring.
Let $\mathscr{F}$ be a Lubin-Tate formal group  over $\mathcal{O}$ for   
the uniformizing parameter $\pi:=-p$. 
Let $\lambda$ denote the logarithm of $\mathscr{F}$. 

For $n\ge 0$,
write $\Phi_n=\Phi(\SF[\pi^{n+1}])$, the extension of $\Phi$ in $\BC_{p}$ generated by the $\pi^{n+1}$-torsion points of $\mathscr{F}$. 
Put $\Phi_{\infty}=\cup_{n\ge 0}\Phi_n$
and $T=T_{\pi}\SF$. 
Let $\Theta_\infty \subset \Phi_\infty$ be the $\BZ_p^2$-extension of $\Phi$, 
$\Psi_\infty$ the anticyclotomic $\BZ_p$-extension 
and $\Psi_n$ the $n$-th layer.
Put 
$\Gamma=\Gal(\Psi_{\infty}/\Phi)\cong\BZ_{p}$, $\Lambda=\cO[\![\Gamma]\!]$ 
and fix  a topological generator $\gamma$ of $\Gamma$. 
Let $U_n$ be the group of principal units in $\Phi_n$, that is, 
the group of elements in $\CO_{\Phi_n}^{\times}$ congruent to one modulo the maximal ideal. 
Put
 \[
T^{\otimes -1}=\Hom_{\CO}(T, \CO),\quad   V_{\infty}^{*} = \left(\varprojlim_nU_{n}\otimes_{\Z_p} T^{\otimes -1}\right)^{\Delta}\otimes_{\CO[\![\Gal(\Phi_{\infty}/\Phi )]\!]}\Lambda,
 \]
 where $\Delta:=\Gal(\Phi_\infty/\Theta_\infty)$  
 and the superscript $\Delta$ refers to $\Delta$-invariants.

Now we recall the Coates-Wiles logarithmic derivatives
\[
\delta: \varprojlim_{n} U_{n}\otimes_{\BZ_{p}} T^{\otimes -1} \rightarrow \CO, \qquad 
\delta_n: \varprojlim_{n} U_{n}\otimes_{\BZ_{p}} T^{\otimes -1} \rightarrow \Phi_n.
\]
For an element $x \in \varprojlim_{n} U_{n}\otimes_{\BZ_{p}} T^{\otimes -1}$, write 
$x=u\otimes v^{\otimes -1}$ where 
$u=(u_n)_n \in \varprojlim_n \,U_n$ and a generator 
$v=(v_n)_n \in T_\pi\SF$ as an $\CO$-module. 
Then consider the Coleman power series $f \in \CO[\![X]\!]^\times$ such that 
$f(v_n)=u_n$ and define 
\[
\delta(x)=\frac{f'(0)}{f(0)}, \quad  \delta_n(x)=\frac{1}{\lambda'(v_n)}\frac{f'(v_n)}{f(v_n)}.
\]
These maps are well-defined and Galois equivariant. 
For a finite character  $\chi: \Gal(\Phi_\infty/\Phi) \rightarrow \overline{\BQ}_p^\times$
factoring through  $\Gal(\Phi_n/\Phi)$,
put
\begin{equation}\label{delta-chi}
\delta_\chi(x)=\frac{1}{\pi^{n+1}}\sum_{\gamma \in \Gal(\Phi_n/\Phi)} \chi(\gamma)\delta_n(x)^\gamma.
\end{equation}
(The definition does not depend on the choice of $n$.)
For any anticyclotomic $\chi$, 
$\delta_\chi$ 
defines a map on $V_\infty^*$. 
The aim of this section is to investigate  
the image of $\delta_{\chi}$.

Let $\Xi$ be the set of finite characters of $\Gamma$ and $\cond{\chi}$ denote the conductor of $\chi\in\Xi$.
Put
\[
\begin{split}
\Xi^+&={\{}\chi \in \Xi \text{ $|$} \text{ }\cond\chi \text{ is an even power of } p{\}},\\
\Xi^-&={\{}\chi \in \Xi \text{ $|$} \text{ } \cond\chi \text{ is an odd power of } p{\}}.
\end{split}
\]
Define
\begin{equation}\label{def:loc}
V^{*,\pm}_{\infty} := {\{} v \in V_{\infty}^* \text{ $|$} \text{ }
\delta_{\chi}(v)=0 \quad \text{for every }\chi \in \Xi^{\mp} {\}}.
\end{equation}
Rubin showed that $V^{*,\pm}_{\infty}$ is a free $\Lambda$-module of rank one (cf.\ \cite[Prop.\ 8.1]{Ru}).

The main result of \cite{BKO} is a proof of the following conjecture of Rubin (cf.~\cite[Conj.~2.2]{Ru}).
 \begin{thm}\label{rubin-conj}
 We have
 \[
V_{\infty}^*=V^{*,+}_{\infty} \oplus V^{*,-}_{\infty}. 
\]
\end{thm}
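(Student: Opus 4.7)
The plan is to prove Theorem \ref{rubin-conj} in three stages: (i) a $\Lambda$-rank computation for $V_\infty^*$, (ii) vanishing of the intersection $V_\infty^{*,+}\cap V_\infty^{*,-}$, and (iii) integral saturation of the internal sum $V_\infty^{*,+}+V_\infty^{*,-}$ inside $V_\infty^*$.

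First I would verify that $V_\infty^*$ has $\Lambda$-rank exactly two. Via the standard Iwasawa-theoretic description of principal local units compatible with the Coates-Wiles formalism, $(\varprojlim_n U_n)\otimes_{\BZ_p} T^{\otimes -1}$ has rank $[\Phi:\BQ_p]=2$ over $\CO[\![\Gal(\Phi_\infty/\Phi)]\!]$; taking $\Delta$-invariants extracts the anticyclotomic slice and base-change to $\Lambda$ preserves the rank. Combined with Rubin's rank-one freeness of each $V_\infty^{*,\pm}$, this matches the expected total rank, so it is at least conceivable that the two summands fill out $V_\infty^*$.

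For the intersection, suppose $v\in V_\infty^{*,+}\cap V_\infty^{*,-}$, so that $\delta_\chi(v)=0$ for every $\chi\in\Xi$. Since the finite characters of $\Gamma$ are Zariski-dense in $\Spec(\Lambda\otimes\overline{\BQ}_p)$, a Fourier-inversion argument forces $\delta_n(v)=0$ for every $n\ge 0$. Passing to the Coleman power series $f$ attached to $v$, this says $f'$ vanishes at each $\pi^{n+1}$-torsion point $v_n$ of $\mathscr{F}$; as the $v_n$ accumulate at the origin, $f'\equiv 0$, so $f$ is a constant and $v$ is torsion. Torsion-freeness of $V_\infty^*$ (inherited from the free rank-one summands via the rank count just performed) then forces $v=0$.

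The decomposition now holds after inverting a non-zero element of $\Lambda$, so the cokernel
\[
C:=V_\infty^*/(V_\infty^{*,+}+V_\infty^{*,-})
\]
is a finitely generated $\Lambda$-torsion module, and the remaining task is to show $C=0$. My approach would be to produce explicit $\Lambda$-module generators $v_\pm$ of $V_\infty^{*,\pm}$ from norm-coherent Lubin-Tate units, and then exploit the precise valuation of $\delta_\chi(v_\pm)$ (ultimately controlled by the local-resolvent bound of Theorem \ref{thm, lr}) to rule out any height-one prime $\fp\subset\Lambda$ in the support of $C$ via a Nakayama-type specialization argument at each layer $\Psi_n$. I expect this integral saturation to be the main obstacle: rationally the decomposition is essentially formal, but forcing $C=0$ demands a genuine integrality input tying Coleman norm-systems to the ramification of $\Psi_\infty/\Psi$. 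It is at this interface, between Rubin's structural freeness results and the fine ramification-theoretic valuation machinery developed earlier in the paper, that I would expect the proof to crystallize.
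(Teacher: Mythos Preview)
The paper does not itself prove Theorem~\ref{rubin-conj}; it is quoted as the main result of \cite{BKO}, so there is no in-paper argument to compare against directly.

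That said, your plan has a genuine gap. Steps (i) and (ii) recover what was already in \cite{Ru}: $V_\infty^*$ has $\Lambda$-rank two, $V_\infty^{*,+}\cap V_\infty^{*,-}=0$, and hence the cokernel $C$ is $\Lambda$-torsion. The entire content of the conjecture is your step (iii), and there the argument is circular. You propose to control $v_p(\delta_\chi(v_\pm))$ through the local-resolvent bound of Theorem~\ref{thm, lr}, but the only bridge this paper builds between $\delta_\chi$ and local resolvents is the identity~\eqref{delta-log}, and that identity is derived using the local points $c_n^\pm=\omega_n^\mp v_{\pm,n}^\perp$. The dual basis $\{v_{+,n}^\perp,v_{-,n}^\perp\}$ exists precisely because $\{v_{+,n},v_{-,n}\}$ is a $\Lambda_n$-basis of $H^1(\Psi_n,T^{\otimes -1}(1))$, and the paper states explicitly that this is ``by Theorem~\ref{rubin-conj}''. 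So the valuation formula of Theorem~\ref{theorem, delta valuation} is a \emph{consequence} of Rubin's conjecture, not an input to it; the logical flow of \S\ref{section: coates-wiles} runs in the opposite direction to the one you need.

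Your fallback --- producing explicit generators $v_\pm$ ``from norm-coherent Lubin--Tate units'' --- names the obstruction rather than removing it. Lubin--Tate units are adapted to the full tower $\Phi_\infty/\Phi$, and projecting them to the anticyclotomic $\BZ_p$-line while keeping enough integral control to separate the $\pm$-pieces is exactly what had resisted proof since \cite{Ru}. The argument in \cite{BKO} does not proceed via $\delta_\chi$-valuations or Nakayama at height-one primes; the local points and the valuation machinery of the present paper are downstream of that proof, not a route back to it.
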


\subsection{Local points} This subsection introduces a system of local points $c_n^{\pm}$ of $\mathscr{F}$, which leads to a link between the image of $\delta_{\chi}$ and local resolvents.
\subsubsection{Local cohomology}\label{s-lcg} 
We first 
describe certain cohomology groups related to  local units.

The Kummer map 
gives a natural isomorphism 
\begin{equation}\label{identification-kummer}
\varprojlim_n U_n \otimes T^{\otimes -1} \cong \varprojlim_n H^1(\Phi_n, \cO(1))\otimes T^{\otimes -1} \cong \varprojlim_n H^1(\Phi_n, T^{\otimes -1}(1))
\end{equation}
of $\CO[\![\Gal(\Phi_{\infty}/\Phi )]\!]$-modules. 
It induces a natural isomorphism 
\begin{equation}\label{anticyclotomic-identification}
V_{\infty}^*\cong \varprojlim_n H^1(\Psi_n, T^{\otimes -1}(1)), 
\end{equation}
and in turn
\begin{equation}\label{identification}
V_{\infty}^{*}/(\gamma^{p^n}-1)\cong H^1(\Psi_{n}, T^{\otimes -1}(1))
\end{equation}
(cf. \cite[Lem.~2.2]{BKOb}).

For a finite extension $L$ of $\Phi$, let 
\begin{equation*}\label{d-exp}
\exp^*_L: H^1(L,T^{\otimes -1}(1)) \to L
\end{equation*}
be the dual exponential map arising from the identification of 
 $\mathrm{coLie}(\mathscr{F})\otimes \Q_p$ with $\Phi$ so that 
   the invariant differential $d\lambda$ corresponds to $1$.

By the explicit reciprocity law of Wiles (cf.~\cite[Thm. 2.1.7, Ch. II]{K93},  \cite{Wi}),
 the diagram 
\begin{equation*}\label{delta-dual-exp}
\xymatrix{
\varprojlim_n U_n \otimes T^{\otimes -1}  \ar[r]\ar[d]_{{\pi^{-n-1}\delta_n}}& \varprojlim_n H^1(\Phi_n, T^{\otimes -1}(1))\ar[d]^{\exp^*_{\Phi_n}}\\
\Phi_n\ar[r]^{=} &\Phi_n 
}
\end{equation*}
commutes, where the upper horizontal map is (\ref{identification-kummer}).
Hence, 
for a character $\chi$ of $\Gal(\Psi_n/\Phi)$ and $v=(v_m)_{m\ge 0}  \in V_{\infty}^*=\varprojlim_m H^1(\Psi_m, T^{\otimes -1}(1))$ (cf.~(\ref{anticyclotomic-identification})), 
\begin{equation}\label{delta-dual-exoponential-with-chi}
\delta_{\chi}(v)= \sum_{\sigma \in \Gal(\Psi_n/\Phi)}\chi(\sigma)\exp^*_{\Psi_n}(v_n)^{\sigma}.
\end{equation}

\subsubsection{Local points}
We construct a system of local points relevant to local Iwaswa theory.

Fix a $\Lambda$-basis $v_\pm$ of $V_{\infty}^{*, \pm}$, which we often view as an element of $\varprojlim_n H^1(\Psi_n, T^{\otimes -1}(1))$ via (\ref{anticyclotomic-identification}).
For $n\ge 0$, put $\Lambda_n=\CO[\mathrm{Gal}(\Psi_n/\Phi)]$.
Let $v_{\pm, n}$ denote the image of $v_{\pm}$ in $H^1(\Psi_n, T^{\otimes -1}(1))$ via (\ref{identification}). 
By Theorem \ref{rubin-conj}, 
$\{v_{+, n}, v_{-,n}\}$ is a $\Lambda_n$-basis of $H^1(\Psi_n, T^{\otimes -1}(1))$.

 The local duality induces a natural pairing 
\[(\ \ , \ \ )_n: H^1(\Psi_n, T) \times H^1(\Psi_n, T^{\otimes -1}(1)) \to \CO.\]
Since it is perfect (cf.\ \cite[Lem.~2.3]{BKOb}),
so is
\begin{equation}\label{natural-perfect-pairing}
(\ \ , \ \ )_{\Lambda_n}: H^1(\Psi_n,T) \times H^1(\Psi_n, T^{\otimes -1}(1)) \to \Lambda_n,\quad (a,b) \mapsto \sum_{\sigma\in \Gal(\Psi_n/\Psi) }(a,b^{\sigma})_n\sigma, 
\end{equation}
which is also sesquilinear with respect to  the involution $\iota$ of $\Lambda_n$ arising from $\sigma \mapsto \sigma^{-1}$ for $\sigma \in \Gal(\Psi_n/\Phi).$
Let $\{v^{\perp}_{+, n}, v^{\perp}_{-,n}\} \subseteq H^1(\Psi_n,T)$ be the dual basis of $\{v_{-,n}, v_{+,n}\}$,  
that is,
\begin{equation}\label{char-of-perp}
\sum_{\sigma \in \Gal(\Psi_n/\Phi)}(v^{\perp}_{\pm, n}, v_{\pm, n}^{\sigma} )_n\sigma =0,\qquad \sum_{\sigma \in \Gal(\Psi_n/\Phi)}(v^{\perp}_{\pm, n}, v_{\mp, n}^{\sigma} )_n\sigma=1.
\end{equation}
Note that $v^{\perp}_{\pm, n}$ depends on the choice of $v_{\mp}$ but 
not of $v_{\pm}$.

Put
\[
\omega^+_n=\omega^+_n(\gamma)=\prod_{1\le k \le n,\ k: \mathrm{even}}\Phi_{p^k}(\gamma), \quad 
 \omega^-_n=\omega^-_n(\gamma)=(\gamma-1)\prod_{1\le k \le n,\ k: \mathrm{odd}}\Phi_{p^k}(\gamma) \in \BZ[\gamma] 
 \]
for $\Phi_{p^k}(X)$ the $p^k$-th cyclotomic polynomial. 
Also put $\omega^{+}_0=1$ and $\omega^{-}_0=\gamma-1$.
\begin{defn}[local points]
For $v_{\pm}$ and $\gamma$ as above, define
\[
c_n^{\pm}:=c_n^{\pm}(v_{\pm}, \gamma):=\omega^{\mp}_nv^{\perp}_{\pm,n} \in 
H^1(\Psi_n,T). \]
\end{defn}
In fact $c_{n}^\pm \in H^{1}_{\rm f}(\Psi_n,T)$ (cf.~\cite[Lem.2.5]{BKOb}) and so 
we may regard $c_{n}^\pm$ as an element of $\SF(\Psi_n)$ where 
$\SF(\Psi_{n}):=\SF(\fm_{n})$ for $\fm_{n}\subset R_n$ the maximal ideal. 

Let $\Xi_n^{\pm}$ denote the subset of $\Xi^{\pm}$ 
of characters factoring through
$\Gal(\Psi_n/\Phi)$ 
and put
\[
\SF(\Psi_n)^{\pm}=\{ x\in \SF(\Psi_n)  \ |  \lambda_{\chi}(x)=0 \text{ for } \chi \in \Xi^{\pm}_n   \} 
\]
for 
$$\lambda_{\chi}(x)=\frac{1}{p^{n}} \sum_{\sigma\in \Gal(\Psi_n/\Phi)}\chi^{-1}(\sigma)\lambda(x)^{\sigma}.$$

\begin{prop}\label{local-rational-points}\noindent
\begin{itemize}
\item[i)] We have 
\[
\SF(\Psi_n)^{}=\SF(\Psi_n)^{+}\oplus \SF(\Psi_n)^{-}.
\]
\item[ii)] For $n\ge 0$,  
the $\Lambda_n$-module $\mathscr{F}(\Psi_n)^\pm$ is generated by $c_n^{\pm} $. 
In particular, for $\varepsilon=(-1)^n$, the local point
$c_n^{\varepsilon}\in \mathscr{F}(\Psi_n)$ corresponds to a uniformizer of $\Psi_n$.
(Note that the underlying set of $\mathscr{F}(\Psi_n)$ is
 the maximal ideal of 
 the integer ring of $\Psi_n$.)
\end{itemize}
\end{prop}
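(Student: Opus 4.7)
My plan is to prove part (ii), from which part (i) follows immediately. The containment $\Lambda_n c_n^\pm \subseteq \mathscr{F}(\Psi_n)^\pm$ is relatively direct: by \cite[Lem.~2.5]{BKOb}, $c_n^\pm$ already lies in $H^1_f(\Psi_n, T) \cong \mathscr{F}(\Psi_n)$, and Galois equivariance of the formal logarithm $\lambda$ means the operator $\omega_n^\mp(\gamma)$ acts on the $\chi$-eigenspace by the scalar $\omega_n^\mp(\chi(\gamma))$, which vanishes for every $\chi \in \Xi_n^\pm$ by construction. Thus $\lambda_\chi(c_n^\pm) = \omega_n^\mp(\chi(\gamma)) \lambda_\chi(v_{\pm,n}^\perp) = 0$ for each such $\chi$.

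The substantial direction is the reverse inclusion $\mathscr{F}(\Psi_n)^+ \subseteq \Lambda_n c_n^+$ (the $-$ case being symmetric). Writing $x = a v_{+,n}^\perp + b v_{-,n}^\perp$ in the $\Lambda_n$-basis of $H^1(\Psi_n, T)$, the dual-basis relation (\ref{char-of-perp}) gives $b = (x, v_{+,n})_{\Lambda_n}$. Applying Wiles's explicit reciprocity law through the commutative diagram relating $\pi^{-n-1}\delta_n$ and $\exp^*_{\Psi_n}$, I compute
\[
\chi(b) = p^n \, \mathrm{Tr}_{\Psi_n/\Phi}\bigl(\lambda_\chi(x) \cdot \exp^*_{\Psi_n}(v_{+,n})\bigr).
\]
Since $\delta_\chi(v_+) = 0$ for $\chi \in \Xi^-$ (as $v_+ \in V_\infty^{*,+}$), the element $\exp^*_{\Psi_n}(v_{+,n})$ has Fourier support contained in $\Xi_n^+$. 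For $\chi \in \Xi_n^+$, $\lambda_\chi(x) = 0$ by definition; for $\chi \in \Xi_n^-$, the trace of the product vanishes by eigenspace orthogonality, since the only potentially nonzero contribution requires $\psi = \chi^{-1} \in \Xi_n^-$, which lies outside the support of $\exp^*_{\Psi_n}(v_{+,n})$. Hence $b = 0$ and $x = a v_{+,n}^\perp$. A parallel Fourier analysis of the dual exponential of $v_{+,n}^\perp$ on $H^1(\Psi_n, T)$, combined with the condition $a v_{+,n}^\perp \in H^1_f$, forces $\chi(a) = 0$ for every $\chi \in \Xi_n^+$; Gauss's lemma on the monic polynomial $\omega_n^-$ then yields the integral divisibility $a \in \omega_n^- \Lambda_n$, so $x \in \Lambda_n c_n^+$.

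Part (i) follows at once: $\mathscr{F}(\Psi_n)^+ \cap \mathscr{F}(\Psi_n)^-$ is zero since every $\lambda_\chi$ vanishes there, and the sum has the full $\CO$-rank $|\Xi_n^+| + |\Xi_n^-| = p^n$. For the uniformizer claim with $\varepsilon = (-1)^n$, I would combine the decomposition $\mathscr{F}(\Psi_n) = \Lambda_n c_n^+ \oplus \Lambda_n c_n^-$ with an explicit valuation computation: the pairing identity $(c_n^\varepsilon, v_{-\varepsilon, n})_{\Lambda_n} = \omega_n^{-\varepsilon}$ together with explicit reciprocity reduces $\langle c_n^\varepsilon | \chi \rangle$ for $\chi$ of order $p^n$ to an expression involving $\omega_n^{-\varepsilon}(\chi(\gamma))$, and the parity choice $\varepsilon = (-1)^n$ ensures this hits the minimal valuation $(n+1)/2$ of Theorem \ref{theorem, Z_p gauss valuation}, forcing $c_n^\varepsilon$ to have non-zero image in $\fm_n/\fm_n^2$ and hence to be a uniformizer of $\Psi_n$.

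The main technical obstacles are: (a) the Fourier-support analysis of the dual exponential of $v_{+,n}^\perp$ needed to deduce $\chi(a) = 0$ on $\Xi_n^+$, requiring careful management of the duality between $T$ and $T^{\otimes -1}(1)$ through explicit reciprocity; and (b) the precise valuation calculation for the uniformizer claim, since Theorem \ref{theorem, Z_p gauss valuation} supplies only a one-sided bound and one must verify that the specific element $c_n^\varepsilon$ attains equality and resides outside $\fm_n^2$.
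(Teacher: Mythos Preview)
The paper does not argue (i) or the generation claim in (ii) directly; both are quoted from \cite[Thm.~2.7]{BKOb}. What the paper does prove is the uniformizer assertion, and its route is much shorter than the valuation calculation you propose: granting (i) and the generation, one simply observes that $\SF(\Psi_n)^{-\varepsilon}\subseteq\SF(\Psi_{n-1})$ (since $\Phi_{p^n}(\gamma)$ divides $\omega_n^{\varepsilon}$ when $\varepsilon=(-1)^n$, every element of $\Lambda_n c_n^{-\varepsilon}=\omega_n^{\varepsilon}\Lambda_n v_{-\varepsilon,n}^\perp$ is a corestriction from level $n-1$), hence $\SF(\Psi_n)^{-\varepsilon}\subseteq\fm_n^{p}$. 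As $\SF(\Psi_n)=\fm_n$ contains a uniformizer while the $-\varepsilon$ summand cannot, the generator $c_n^{\varepsilon}$ of the other summand must be one. Your approach through the pairing identity and Theorem~\ref{theorem, Z_p gauss valuation} is not merely harder; as you set it up it is circular. The identity \eqref{delta-log} gives only the product $\langle\lambda(c_n^{\varepsilon})\,|\,\chi^{-1}\rangle\cdot\delta_\chi(v_{-\varepsilon})$, so isolating the resolvent factor requires the valuation of $\delta_\chi(v_{-\varepsilon})$, which is exactly the content of Theorem~\ref{theorem, delta valuation}---and that theorem is proved \emph{using} the uniformizer claim of this proposition via Proposition~\ref{prop, gauss lambda}.

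There are two further gaps. Your deduction of (i) from (ii) by a rank count shows only that $\SF(\Psi_n)^+\oplus\SF(\Psi_n)^-$ has finite index in $\SF(\Psi_n)=\fm_n$; it does not exclude a nontrivial cokernel, and that exclusion is precisely the substance of (i). Second, your easy-containment step writes $\lambda_\chi(c_n^\pm)=\omega_n^\mp(\chi(\gamma))\,\lambda_\chi(v_{\pm,n}^\perp)$, but $v_{\pm,n}^\perp$ generally does not lie in $H^1_f(\Psi_n,T)=\SF(\Psi_n)$, so the symbol $\lambda_\chi(v_{\pm,n}^\perp)$ is undefined and the factorization of $\omega_n^\mp$ through the logarithm does not go through as written.
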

\begin{proof}
i) This is \cite[Thm.~2.7~(1)]{BKOb}. ii) See \cite[Thm.~2.7]{BKOb} for the first assertion.
Then the latter just follows from i) and the fact that $\SF(\Psi_n)^{-\varepsilon} \subseteq \SF(\Psi_{n-1})$. 
\end{proof}
\begin{remark}
The above system of local points is elemental to anticyclotomic Iwasawa theory of CM as well as non-CM elliptic curves over imaginary quadratic fields with $p$ inert (cf.~\cite{BBL1},~\cite{BKOb}). 
\end{remark}

\subsection{Main result}

\begin{thm}\label{theorem, delta valuation}
Let $\chi$ be a finite character of $\mathrm{Gal}(\Psi_n/\Phi)$ of order $p^n >1$, and put $\varepsilon=(-1)^{n-1}$.  
Let $v_{\varepsilon}$ 
be a generator of $V_{\infty}^{*, \varepsilon}$.
Then we have 
\[
v_{p}(\delta_{\chi}(v_{\varepsilon}))=-\frac{n+1}{2}+
\frac{1}{p^{n-1}(p-1)}\left(\frac{1-\varepsilon}{2}+\sum_{(-1)^{k}=\varepsilon} (p^k-p^{k-1})\right)
\] 
where $k$ runs through integers between $1$ and $n-1$ such that $(-1)^k=\varepsilon$. 
\end{thm}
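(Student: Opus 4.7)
The plan is to reduce the valuation of $\delta_{\chi}(v_{\varepsilon})$ to two pieces: (a) a local resolvent attached to a uniformizer of $\Psi_n$, controlled by Theorem~\ref{theorem, Z_p gauss valuation}, and (b) an explicit cyclotomic quantity. Since $\varepsilon=(-1)^{n-1}$, we have $-\varepsilon=(-1)^n$, so by Proposition~\ref{local-rational-points}~ii) the local point $c_n^{-\varepsilon}$ corresponds to a uniformizer of $\Psi_n$. The core step is to derive the key identity
\[
\chi(\omega_n^{\varepsilon})\;=\;\langle \lambda(c_n^{-\varepsilon})\,|\,\chi^{-1}\rangle_{\Gamma_n}\cdot\delta_{\chi}(v_{\varepsilon})
\]
by computing the pairing $(c_n^{-\varepsilon},v_{\varepsilon,n})_{\Lambda_n}$ in two ways. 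Algebraically, using $c_n^{-\varepsilon}=\omega_n^{\varepsilon}v_{-\varepsilon,n}^{\perp}$, the $\Lambda_n$-linearity of \eqref{natural-perfect-pairing} in the first variable, and the duality equations \eqref{char-of-perp}, the pairing equals $\omega_n^{\varepsilon}$; applying $\chi$ yields the left-hand side. Analytically, the explicit reciprocity law recalled in \S\ref{s-lcg} identifies $(c,v)_n$ with $\mathrm{Tr}_{\Psi_n/\Phi}(\lambda(c)\exp^*_{\Psi_n}(v))$; substituting $\rho=\sigma\tau$ in the resulting double sum factorizes $\chi((c_n^{-\varepsilon},v_{\varepsilon,n})_{\Lambda_n})$ as the product of $\langle \lambda(c_n^{-\varepsilon})\,|\,\chi^{-1}\rangle_{\Gamma_n}$ and $\langle \exp^*_{\Psi_n}(v_{\varepsilon,n})\,|\,\chi\rangle_{\Gamma_n}$, the latter being $\delta_{\chi}(v_{\varepsilon})$ by \eqref{delta-dual-exoponential-with-chi}.

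With the identity in hand, the rest is calculation. Since the Lubin-Tate logarithm satisfies $\lambda(X)=X+O(X^p)$, the element $\lambda(c_n^{-\varepsilon})$ is a uniformizer of $\Psi_n$ up to a unit, so Theorem~\ref{theorem, Z_p gauss valuation} (applied with $\Psi=\Phi$ unramified over $\BQ_p$) gives
\[
v_p\bigl(\langle \lambda(c_n^{-\varepsilon})\,|\,\chi^{-1}\rangle_{\Gamma_n}\bigr)=\frac{n+1}{2}.
\]
For the cyclotomic factor $v_p(\chi(\omega_n^{\varepsilon}))$, I will use that $\chi(\gamma)$ is a primitive $p^n$-th root of unity, whence $v_p(\chi(\gamma)-1)=\frac{1}{p^{n-1}(p-1)}$ and $v_p(\Phi_{p^k}(\chi(\gamma)))=\frac{1}{p^{n-k}}$ for $1\le k\le n-1$. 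Inspection of the definitions of $\omega_n^{\pm}$ shows that, for the parity $\varepsilon=(-1)^{n-1}$ making $\chi(\omega_n^{\varepsilon})\neq 0$ (the factor $\Phi_{p^n}(\gamma)$ falls into $\omega_n^{-\varepsilon}$ and thus is absent from $\omega_n^{\varepsilon}$), the surviving cyclotomic polynomials are exactly those with $1\le k\le n-1$ and $(-1)^k=\varepsilon$, while the factor $(\gamma-1)$ contributes precisely when $\varepsilon=-1$. Rewriting $\frac{1}{p^{n-k}}=\frac{p^k-p^{k-1}}{p^{n-1}(p-1)}$ matches the parenthesized expression in the statement, and subtracting the $\frac{n+1}{2}$ coming from the resolvent yields the asserted formula.

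The main obstacle is the identity in the first paragraph: one must align the dual-basis convention~\eqref{char-of-perp}, the sesquilinearity of $(\,,\,)_{\Lambda_n}$ with respect to the involution $\iota$, and the precise normalization of $\exp^*_{\Psi_n}$ built into~\eqref{delta-dual-exoponential-with-chi}, so that the two computations of $(c_n^{-\varepsilon},v_{\varepsilon,n})_{\Lambda_n}$ produce literally the same element of $\Lambda_n$, with no stray factor of $\pi$ or Galois conjugation. Once this bookkeeping is settled, the theorem is a clean consequence of the local resolvent bound of \S\ref{Gauss}, the uniformizer property of $c_n^{-\varepsilon}$ from Proposition~\ref{local-rational-points}, and an elementary cyclotomic valuation computation.
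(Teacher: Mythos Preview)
Your overall architecture---the identity $\langle \lambda(c_n^{-\varepsilon})\,|\,\chi^{-1}\rangle\cdot\delta_{\chi}(v_{\varepsilon})=\omega_n^{\varepsilon}(\chi(\gamma))$ obtained by computing the $\Lambda_n$-pairing two ways, followed by the cyclotomic valuation count---is exactly the paper's approach, and your derivation of the identity and the computation of $v_p(\omega_n^{\varepsilon}(\chi(\gamma)))$ are correct.

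There is, however, a genuine gap at the step
\[
v_p\bigl(\langle \lambda(c_n^{-\varepsilon})\,|\,\chi^{-1}\rangle\bigr)=\tfrac{n+1}{2}.
\]
Your justification is that $\lambda(X)=X+O(X^p)$ forces $\lambda(c_n^{-\varepsilon})$ to be a uniformizer up to a unit. This is false: the logarithm of a Lubin--Tate group over $\CO$ has coefficients in $\Phi$, not in $\CO$. Concretely (cf.~the normalization used in the paper), $\lambda(t)=\sum_{i\ge 0}(-1)^i t^{p^{2i}}/p^i$, so for a uniformizer $\alpha$ of $\Psi_n$ one has $v_p(\alpha^{p^2}/p)=p^{2-n}-1$, which is $\le 0$ for $n\ge 2$ and strictly negative for $n\ge 3$. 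Thus $\lambda(\alpha)$ is not a uniformizer for $n\ge 2$ and not even in $R_n$ for $n\ge 3$, so Theorem~\ref{theorem, Z_p gauss valuation} cannot be applied to it directly.

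The paper closes this gap with Proposition~\ref{prop, gauss lambda}, whose proof rests on an ingredient you have not invoked: Corollary~\ref{cor, frob uniformizer} (equivalently Theorem~\ref{thm, frob uniformizer}), which says that any uniformizer $\alpha$ of $\Psi_n$ satisfies $\alpha^{p}\in pR_n+R_{n-1}$. Iterating this gives $\lambda(\alpha)-\alpha\in pR_n+\Psi_{n-1}$. The $\Psi_{n-1}$-part is annihilated by $\langle\,\cdot\,|\chi\rangle$ because $\chi$ has exact order $p^n$, and the $pR_n$-part contributes valuation at least $1+\tfrac{n+1}{2}$ by Theorem~\ref{theorem, Z_p gauss valuation}; hence $v_p(\langle\lambda(\alpha)|\chi\rangle)=v_p(\langle\alpha|\chi\rangle)=\tfrac{n+1}{2}$. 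Once you insert this argument (or cite Proposition~\ref{prop, gauss lambda}), your proof is complete and coincides with the paper's.
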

We begin with a preliminary. 
\begin{prop}\label{prop, gauss lambda}
Let $\alpha$ be a uniformizer of $\Psi_n$. 
Let $\chi$ be a character of $\mathrm{Gal}(\Psi_n/\Phi)$ of order $p^n>1$. 
Then 
\[
\langle \lambda(\alpha)|\chi \rangle -\langle \alpha|\chi \rangle \in p\langle \alpha|\chi \rangle R_n. 
\]
In particular, $v_p(\langle \lambda(\alpha)|\chi\rangle)=v_p(\langle \alpha|\chi \rangle)=\frac{n+1}{2}$. 
\end{prop}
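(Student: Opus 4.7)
The plan is to show $v_p(\langle \lambda(\alpha) - \alpha \mid \chi \rangle) \ge \tfrac{n+3}{2}$; combined with $v_p(\langle \alpha \mid \chi \rangle) = \tfrac{n+1}{2}$ from Theorem~\ref{theorem, Z_p gauss valuation}, this yields both the stated containment and the valuation equality. The workhorse will be a layered decomposition obtained by iterating Corollary~\ref{cor, frob uniformizer}: for any $x \in R_n$ and $1 \le r \le n$,
\[
x^{p^r} \in R_{n-r} + p R_{n-r+1} + \cdots + p^{r-1} R_{n-1} + p^r R_n,
\]
proved by induction from the base case $u^p \in R_{n-2} + p R_{n-1}$ for $u \in R_{n-1}$ and the identity $(u + pv)^p \equiv u^p \pmod{p^2 R_n}$. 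Since $\chi$ has exact order $p^n$, the sum $\sum_{\tau \in \Gal(\Psi_n/\Psi_{n-1})} \chi(\tau)$ vanishes, so $\langle y \mid \chi \rangle = 0$ for every $y \in R_{n-1}$. Only the $p^r R_n$-layer survives the pairing, which gives $v_p(\langle x^{p^r} \mid \chi \rangle) \ge r + \tfrac{n+1}{2}$ by Theorem~\ref{theorem, Z_p gauss valuation}.

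To set up the estimate I would use the Lubin--Tate functional equation $\lambda \circ [\pi] = \pi \lambda$. Normalizing $\SF$ so that $[\pi]X = \pi X + X^q$ with $q = p^2$, and setting $z := [\pi]\alpha = -p\alpha + \alpha^q$, the equation rearranges to
\[
\lambda(\alpha) - \alpha \;=\; -\,\frac{\alpha^q}{p} \;-\; \frac{\lambda(z) - z}{p}.
\]
For the main term, the layered decomposition with $r = 2$ gives $v_p(\langle \alpha^q \mid \chi \rangle / p) \ge -1 + 2 + \tfrac{n+1}{2} = \tfrac{n+3}{2}$. For the tail, a standard functional-equation analysis forces the nonzero coefficients $a_m$ of $\lambda(X) - X = \sum_{m \ge q} a_m X^m$ with $v_p(a_m) < 0$ to occur only at $m = q^k$ with $v_p(a_{q^k}) = -k$, while the remaining $a_m$ are integral. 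For $m = q^k$ with $k \ge 1$, the dominant monomial in the multinomial expansion of $z^m$ is $\alpha^{q^{k+1}} = \alpha^{p^{2k+2}}$; factoring $\alpha^{p^{2k+2}} = (\alpha^{p^2})^{p^{2k}}$ and applying the layered decomposition to $\alpha^{p^2}$ together with the multinomial expansion of its $p^{2k}$-th power yields $v_p(\langle \alpha^{p^{2k+2}} \mid \chi \rangle) \ge 2k + 1 + \tfrac{n+1}{2}$, whence $v_p(a_{q^k} \langle z^{q^k} \mid \chi \rangle / p) \ge k + \tfrac{n+1}{2} \ge \tfrac{n+3}{2}$.

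The subdominant monomials in each $z^m$ carry additional $p$-factors from $(-p\alpha)^i$, and the integral-$a_m$ terms also satisfy the bound with room to spare. The hard part will be orchestrating the bookkeeping uniformly across all $m$: for each monomial $c \cdot p^i \cdot \alpha^N$ appearing in some $z^m$ one has to choose the correct factorization $\alpha^N = (\alpha^{p^a})^{p^b}$, with $a$ chosen near $n$ when the exponent $N$ exceeds $p^n$, so that the leading binomial term in the $p^b$-th power lands in $R_{n-1}$ and is killed by $\langle \cdot \mid \chi \rangle$, leaving the $p^{b+1} R_n$-layer to supply the required additional factor of $p$.
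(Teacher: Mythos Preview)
Your overall plan—prove $v_p(\langle \lambda(\alpha)-\alpha\mid\chi\rangle)\ge\tfrac{n+3}{2}$ by showing $\lambda(\alpha)-\alpha$ lies in $pR_n+\Psi_{n-1}$, then use that $\langle\,\cdot\mid\chi\rangle$ kills $\Psi_{n-1}$ and that $\langle\alpha\mid\chi\rangle$ has minimal valuation among $\langle\beta\mid\chi\rangle$ for $\beta\in R_n$—is exactly the paper's strategy. The difference is in how $\lambda$ is controlled, and here the paper's route is much shorter.

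The paper invokes Honda's theory of commutative formal groups to choose a coordinate on $\SF$ in which
\[
\lambda(t)=\sum_{i\ge 0}(-1)^i\,\frac{t^{p^{2i}}}{p^i}.
\]
Then $\lambda(\alpha)-\alpha=\sum_{i\ge 1}(-1)^i\alpha^{p^{2i}}/p^i$, and the one-line induction (from $\alpha^p\in pR_n+R_{n-1}$, i.e.\ Corollary~\ref{cor, frob uniformizer}) giving $\alpha^{p^r}\in p^rR_n+R_{n-1}$ places each term in $p^iR_n+p^{-i}R_{n-1}\subset pR_n+\Psi_{n-1}$. No coefficient analysis, no multinomial bookkeeping.

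Your choice of the Lubin--Tate coordinate $[\pi]X=\pi X+X^q$ is legitimate but makes the tail harder. The concrete soft spot is the ``integral-$a_m$'' terms: for $m$ with $a_m\in\CO$ but $p\nmid m$, the $i=1$ monomial in $z^m=(-p\alpha+\alpha^q)^m$ is $-mp\,\alpha^{q(m-1)+1}$; since $q(m-1)+1$ is prime to $p$, your layered decomposition yields no extra factor on $\langle\alpha^{q(m-1)+1}\mid\chi\rangle$, and the contribution to $\langle a_m z^m\mid\chi\rangle/p$ has $v_p$ only $\ge\tfrac{n+1}{2}$. To close this you would need $v_p(a_m)\ge 1$ for every such $m$; this appears to be true (e.g.\ one computes $v_p(a_{2q-1})=q-1$), but it is a separate inductive statement about the Lubin--Tate logarithm that you have not supplied. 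Your ``with room to spare'' is precisely where the argument is incomplete.

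A cleaner repair inside your own framework is to iterate the functional equation instead of expanding the series. With $z_0=\alpha$ and $z_{k+1}=[\pi]z_k$, one has
\[
\lambda(\alpha)-\alpha=\sum_{k\ge 0}\frac{(-1)^{k+1}z_k^{\,q}}{p^{k+1}},
\]
and the easy induction $z_k\in p^kR_n+R_{n-1}$ (hence $z_k^q\in p^{k+2}R_n+R_{n-1}$) puts every summand in $pR_n+\Psi_{n-1}$. This is essentially the Honda argument transported to your coordinate, with no coefficient analysis required.
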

\begin{proof} 
By an appropriate choice of the parameter $t$ of $\mathscr{F}$, 
we may 
 assume that 
 \[\lambda(t)=\sum_{i=0}^\infty (-1)^i \frac{t^{p^{2i}}}{p^i}.
 \] 
(For example this follows from Honda's theory of formal groups~\cite{Ho}.)
Let $\alpha$ be an arbitrary uniformizer of $\Psi_n$. 
Then write $\alpha^p \in pR_{n}+R_{n-1}$ by Corollary \ref{cor, frob uniformizer}. 

Hence we have 
\[
\lambda(\alpha)-\alpha=\sum_{i=1}^\infty (-1)^i \frac{\alpha^{p^{2i}}}{p^i} \in pR_n+\Psi_{n-1}. 
\]
Note that $\langle \beta|\chi \rangle=0$ if $\beta \in \Psi_{n-1}$ 
and $\langle \beta|\chi \rangle \in \langle \alpha|\chi \rangle R_{n}$ if $\beta \in R_n$. 
(By 
Theorem \ref{theorem, Z_p gauss valuation}, the valuation of $\langle \beta|\chi \rangle$ is minimum if $\beta$ is a uniformizer.)
The assertion follows.
\end{proof}
We now return to Theorem~\ref{theorem, delta valuation}.

\begin{proof}[Proof of Theorem~\ref{theorem, delta valuation}]
By definition,
\begin{equation}\label{cup}
\sum_{\sigma \in \Gal(\Psi_n/\Phi)}(c_n^{\pm}, v_{\mp,n}^{\sigma})_n\sigma=\omega_n^{\mp}  \in \Lambda_n. 
\end{equation}

Note that
\begin{align*}
\sum_{\sigma \in \Gal(\Psi_n/\Phi)}(c_n^{\pm}, v_{\mp,n}^{\sigma})_n\sigma 
&=\sum_{\sigma \in \Gal(\Psi_n/\Phi)}(\exp_{\Psi_n}(\lambda(c_n^{\pm})), v_{\mp,n}^{\sigma})_n\sigma \\
&=\sum_{\sigma \in \Gal(\Psi_n/\Phi)}\mathrm{Tr}_{\Psi_n/\Phi}\left(\lambda (c_n^{\pm}) \exp^*_{\Psi_n}(v_{\mp,n})^{\sigma}\right)\sigma\\
&=\left(
\sum_{\sigma \in \Gal(\Psi_n/\Phi)}\lambda(c_n^{\pm})^{\sigma} \sigma^{-1}\right)
\left(\sum_{\sigma \in \Gal(\Psi_n/\Phi)}\exp_{\Psi_n}^*(v_{\mp,n})^{\sigma} \sigma \right) 
\end{align*}
where $\exp_{\Psi_n}$ denotes the exponential map, the second equality follows from the fact that $\exp^*_{\Psi_n}$ is the dual of $\exp_{\Psi_n}$, and the third from Lemma \ref{lemma, lagrange}.
So in view of (\ref{delta-dual-exoponential-with-chi}) and (\ref{cup}),  the evaluation at a character  $\chi$ of $\Gal(\Psi_n/\Phi)$ gives
\begin{equation}\label{delta-log}
\langle \lambda_{}(c_n^{\pm})|\chi^{-1} \rangle \delta_{\chi}(v_{\mp})=\omega_n^{\mp}(\chi(\gamma)).
\end{equation} 
(In the case $(-1)^n= -\varepsilon$ the left and right hand sides of \eqref{delta-log} just vanish by definition.)

 For a primitive $p^n$-th root of unity $\zeta_{p^n}$ 
with $n>k$, we have 
\[v_p(\Phi_{p^k}(\zeta_{p^n}))=v_p((\zeta_{p^{n-k}}-1)/(\zeta_{p^{n-k+1}}-1))
=\frac{p^k-p^{k-1}}{p^{n-1}(p-1)}.\]
Hence the assertion is a consequence of Proposition \ref{local-rational-points} ii), Proposition \ref{prop, gauss lambda} and \eqref{delta-log}.
\end{proof}

\section{The $p$-adic valuation of Hecke $L$-values}\label{application}
This section presents an application of the $p$-adic valuation of generalized Gauss sum to that of Hecke $L$-values 
(cf. Theorem \ref{theorem, main L-value}).

Let $p\geq 5$ be a prime. Fix an algebraic closure $\ov{\BQ}$ and an embedding 
$\iota_{p}:\ov{\BQ}\hookrightarrow \ov{\BQ}_p$.

\subsection{Rubin's $p$-adic $L$-function}
Let $K$ be an imaginary quadratic field with $p$ inert and 
$H$ the Hilbert class field of $K$. 
Assume that 
\begin{equation}\label{clp}
p \nmid h_{K}.
\end{equation}

Let $K_\infty$ be the anticyclotomic $\BZ_p$-extension of $K$ 
and $K_n$ the $n$-th layer. In view of \eqref{clp} we often regard the set $\Xi$ of anticyclotomic $p$-power order  characters of $\Phi=K_p$
as that of anticyclotomic Hecke characters of $K$.

Let $\varphi$ be a Hecke character of $K$ of infinity type $(1,0)$. Let $E$ be a $\BQ$-curve in the sense of Gross \cite{Gr80}
such that the Hecke character $\varphi\circ N_{H/K}$  is associated to $E$, and $E$ has good reduction at each prime of $H$ above $p$. 
Let $\fp$ be the prime of $H$ above $p$ compatible with the embedding $\iota_p$.
Fix a Weierstrass model  of $E$ over $H \cap \mathcal{O}$ which is smooth at $\fp$. 
By considering a Galois conjugate of $E$ over $H$, 
we may assume 
the existence of a complex period $\Omega\in\BC^\times$
such that 
$L= \mathcal{O}_K\Omega$, where  
$L$ is the period lattice associated to the model. 

An insight of Rubin is the following existence of 
a $p$-adic $L$-function (cf.~\cite[\S10]{Ru},~\cite[\S6]{BKO}). 
\begin{thm}\label{theorem, Rubin-pL}
Let $\varepsilon \in \{+, -\}$ be the sign of the functional equation of the Hecke $L$-function 
$L(\varphi, s)$. Let $v_\varepsilon$ be a generator of the $\Lambda$-module $V_\infty^{*, \varepsilon}$. 
Then there exists $\mathscr{L}_p(\varphi, \Omega, v_\varepsilon)=:\mathscr{L}_{E}  \in \Lambda$ such that 
\[
\mathscr{L}_{E}(\chi)=\frac{1}{\delta_{\chi^{-1}}(v_\varepsilon)}\cdot \frac{L(\overline{\varphi\chi}, 1)}{\Omega}
\]
for $\chi\in\Xi^{\varepsilon}$.
\end{thm}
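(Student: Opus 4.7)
The plan is to realize $\mathscr{L}_E$ as the $v_\varepsilon$-coordinate, inside $V_\infty^{*,\varepsilon}$, of the projection of a norm-compatible sequence of elliptic units, and then to derive the interpolation formula from the explicit reciprocity law combined with Theorem \ref{rubin-conj}. First I would produce a canonical class $\beta\in V_\infty^*$ from the Robert-type elliptic units attached to the $\BQ$-curve $E/H$ at the primes above $p$: taking the norm-compatible tower along $\Phi_\infty/\Phi$ gives an element of $\varprojlim_n U_n \otimes T^{\otimes -1}$, and projection to the $\Delta$-invariants followed by tensoring with $\Lambda$ lands in $V_\infty^*$ (the hypothesis $p\nmid h_K$ ensures no loss of integrality in this step). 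The key analytic input is the Coates-Wiles interpolation
\[
\delta_{\chi^{-1}}(\beta)=\frac{L(\overline{\varphi\chi},1)}{\Omega}
\]
valid for every finite-order anticyclotomic $\chi$, obtained by evaluating the Coleman power series of $\beta$ at the Lubin-Tate sequence $v_n$, identifying the $\chi^{-1}$-twisted sum with a partial Hecke $L$-value via a Kronecker/Katz-type formula, and repackaging these partial sums into the full Hecke $L$-value for the period $\Omega$ tied to the chosen Weierstrass model.

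Next I would invoke Theorem \ref{rubin-conj} to decompose $\beta=\beta^{+}+\beta^{-}$ with $\beta^{\pm}\in V_\infty^{*,\pm}$. For $\chi\in\Xi^{\varepsilon}$, the defining property \eqref{def:loc} of $V_\infty^{*,-\varepsilon}$ (noting that $\chi^{-1}$ has the same conductor parity as $\chi$) forces $\delta_{\chi^{-1}}(\beta^{-\varepsilon})=0$, so $\delta_{\chi^{-1}}(\beta)=\delta_{\chi^{-1}}(\beta^{\varepsilon})$. Since $V_\infty^{*,\varepsilon}$ is a free $\Lambda$-module of rank one with basis $v_\varepsilon$, there is a unique $\mathscr{L}_E\in\Lambda$ such that $\beta^{\varepsilon}=\mathscr{L}_E\cdot v_\varepsilon$. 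A short change-of-variable in the sum \eqref{delta-chi}, using Galois equivariance of $\delta_n$, gives the twist compatibility $\delta_{\chi^{-1}}(f\cdot v_\varepsilon)=f(\chi)\,\delta_{\chi^{-1}}(v_\varepsilon)$ for every $f\in\Lambda$; combining this with the two previous points yields exactly the required interpolation formula for $\mathscr{L}_E(\chi)$.

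The hard part is the Coates-Wiles interpolation in the first paragraph. The output of $\delta_n$ lives on the formal group of $E$ at $\fp$ over $\Phi_n$, whereas the target is a classical Hecke $L$-value over $K$; matching them requires a careful choice of elliptic units producing the right period $\Omega$ attached to the chosen Weierstrass model, and explicit control of the Euler factor at $p$. In the inert supersingular regime there is no Katz $p$-adic $L$-function from which to borrow the interpolation, so this step must be carried out directly, following Rubin's original approach in \cite{Ru} (with Theorem \ref{rubin-conj} supplying the decomposition needed to pass from $\beta\in V_\infty^*$ to the element $\beta^{\varepsilon}\in V_\infty^{*,\varepsilon}$ that defines $\mathscr{L}_E$).
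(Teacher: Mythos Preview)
The paper does not give its own proof of this theorem: it is stated with the attribution ``An insight of Rubin is the following existence of a $p$-adic $L$-function (cf.~\cite[\S10]{Ru},~\cite[\S6]{BKO})'' and no further argument. Your outline is precisely the construction carried out in those references: build the norm-compatible elliptic unit class $\beta\in V_\infty^*$, identify $\delta_{\chi^{-1}}(\beta)$ with $L(\overline{\varphi\chi},1)/\Omega$ via the Coates--Wiles/explicit reciprocity computation, use the decomposition of Theorem~\ref{rubin-conj} to project onto $V_\infty^{*,\varepsilon}$, and read off $\mathscr{L}_E$ as the $v_\varepsilon$-coordinate. The twist compatibility you state and the vanishing of $\delta_{\chi^{-1}}$ on the opposite-sign piece are both correct. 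So your proposal matches the approach of the cited sources and there is nothing to add.
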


\noindent Here the non-vanishing of $\delta_{\chi}(v_{\varepsilon})$ is a consequence of Rubin's conjecture (cf.~\cite[Lem.~10.1]{Ru}).

A main result of \cite{BKOb} is the following 
$p$-adic Beilinson formula 
(cf.~\cite[Thm.~1.1]{BKOb}). 

\begin{thm}\label{ThmA}
Let $E_{/\BQ}$ be a CM elliptic curve with root number $-1$ 
and $K$ the CM field. 
Let $p\geq5$ be a prime of good supersingular reduction for $E_{/\BQ}$ and 
$\mathscr{L}_{E}$ the Rubin $p$-adic $L$-function. 
Then there exists a rational point $P\in E(\BQ)$ with the following properties. 
\begin{itemize}
\item[a)] We have
{
$$
\mathscr{L}_{E}(\mathds{1}) =\left(1+\frac{1}{p}\right) 
\frac{\mathrm{log}_{\omega}(P)^{2}}{\mathrm{log}_{\omega}(v_{-,0})}\cdot
 c_P
$$
for some $c_{P}\in\BQ^\times\mathcal{O}_K^\times$. 
 }
\item[b)]
$P$ is non-torsion if and only if $\ord_{s=1}L(E_{/\BQ},s)=1$. 
\item[c)]
If $\ord_{s=1}L(E_{/\BQ},s)=1$, then 
$$
c_{P}=\frac{L'(E_{/\BQ},1)}{\Omega \cdot \langle P, P \rangle_{\infty}}
$$
for $\langle \ , \ \rangle_{\infty}$ the N\'eron-Tate height pairing. 
\end{itemize}
\end{thm}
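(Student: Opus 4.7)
The plan is to evaluate $\mathscr{L}_{E}(\mathds{1})$ by unwinding its construction rather than by interpolation, since $\mathds{1}\not\in\Xi^{-}$ (the interpolation range when $W(E_{/\BQ})=-1$), so the formula of Theorem~\ref{theorem, Rubin-pL} does not apply directly. I would realize $\mathscr{L}_{E}$ as the $\Lambda$-element obtained by pairing the basis $v_{-}\in V_{\infty}^{*,-}$ against a norm-compatible system of global Iwasawa classes $\mathbf{z}=(\mathbf{z}_n)_n\in\varprojlim_n H^{1}(\Psi_n,T)$ built from the CM theory of $E$. The specialization at $\mathds{1}$ then corresponds via \eqref{identification} to the level-zero pairing $(\mathbf{z}_0, v_{-,0})_0$, which by Wiles' explicit reciprocity (\S\ref{s-lcg}) rewrites in terms of the dual exponential $\exp^{*}_{\Phi}$, and hence in terms of formal-group logarithms $\log_{\omega}$ of the two classes.

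The Euler factor $(1+\tfrac{1}{p})$ is the ``missing'' local factor at $p$ produced by specialization at the trivial character under the supersingular hypothesis $a_{p}=0$, since $1-a_pp^{-1}+p^{-1}=(p+1)/p$. The denominator $\log_{\omega}(v_{-,0})$ records the normalization by the basis vector entering the definition of $\mathscr{L}_{E}$. The square $\log_{\omega}(P)^{2}$ arises because, under the plus/minus decomposition from Proposition~\ref{local-rational-points} together with the root-number hypothesis $W(E_{/\BQ})=-1$, the class $\mathbf{z}_0$ can be arranged as a self-pairing of a single global rational class via the complex-conjugation symmetry of $\mathbf{z}$; this global class lies in $H^{1}_{\mathrm{f}}(\BQ,T)\cong E(\BQ)\otimes\BZ_{p}$, and its Bloch--Kato image is the desired point $P$. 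The scalar $c_{P}\in\BQ^{\times}\mathcal{O}_{K}^{\times}$ collects the comparison between the CM period $\Omega$, the N\'eron differential $\omega$, and the Coleman/Coates--Wiles normalizations.

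For (b), I would invoke Kolyvagin's Euler system argument adapted to the supersingular CM-inert setting via the plus/minus machinery underlying Theorem~\ref{rubin-conj}: the class producing $P$ is non-torsion precisely when the $p$-part of the Bloch--Kato Selmer group of $E_{/\BQ}$ has rank one, equivalent to $\mathrm{ord}_{s=1}L(E_{/\BQ},s)=1$ by Kolyvagin's theorem. For (c), once analytic rank one is in hand, I would combine (a) with the classical Gross--Zagier formula $L'(E_{/\BQ},1)=(\text{rational})\cdot \Omega\cdot\langle P_{\mathrm{Heeg}}, P_{\mathrm{Heeg}}\rangle_{\infty}$, matching $P$ with $P_{\mathrm{Heeg}}$ up to a $\BQ^{\times}$-scalar that folds into $c_P$. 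The primary obstacle will be pinning down $c_P$ exactly in $\BQ^{\times}\mathcal{O}_K^{\times}$: this demands meticulous tracking of CM-period conventions, the Lubin--Tate normalization $\pi=-p$, and the $\Lambda^{\times}$-ambiguity in $v_{-}$. A secondary difficulty is that the classical Heegner-point Euler system does not apply verbatim here (since $K$ is itself the CM field of $E$), forcing the use of CM-cycle or elliptic-unit substitutes adapted to the inert-supersingular setup, for which the local rigidity supplied by Theorem~\ref{rubin-conj} and Proposition~\ref{local-rational-points} is essential.
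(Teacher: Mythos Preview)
The paper does not prove Theorem~\ref{ThmA}. It is stated in \S\ref{application} as ``A main result of \cite{BKOb}'' with the reference \cite[Thm.~1.1]{BKOb}, and is then used as a black box in the proof of Corollary~\ref{cor, main L-value}. There is therefore nothing in the present paper to compare your proposal against.

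That said, a brief remark on your sketch relative to what actually happens in \cite{BKOb}. Your overall architecture is correct: $\mathscr{L}_{E}$ is indeed realized as the image of a global Iwasawa class (built from elliptic units) under a Coleman-type map determined by $v_{-}$, and its value at $\mathds{1}$ is computed outside the interpolation range via the local pairing at level zero and Wiles' reciprocity. The factor $1+1/p$ does arise from the $p$-Euler factor under $a_p=0$, and $\log_\omega(v_{-,0})$ from the normalization. However, your explanation of the square $\log_\omega(P)^2$ as coming from a ``self-pairing via complex-conjugation symmetry of $\mathbf{z}$'' is not the mechanism; in \cite{BKOb} the square emerges because the elliptic-unit class is related, via a Rubin-type formula linking elliptic units to Heegner/rational points through the dual exponential, to the \emph{square} of the logarithm of a global point --- this is the $p$-adic Gross--Zagier/Rubin-formula step, not a local self-duality trick. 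For (b), the input is not Kolyvagin's Heegner-point Euler system (which, as you yourself note, does not apply when $K$ is the CM field) but rather Rubin's work on CM elliptic curves combined with the non-triviality criterion coming from the elliptic-unit Euler system. Part (c) is obtained by comparing the $p$-adic formula with the archimedean Gross--Zagier formula for CM elliptic curves, essentially as you indicate.
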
 
\noindent Note that  $v_{-,0} \in E(\Phi)$ since $\mathds{1} \in\Xi^{+}$ and $\exp^*_E(v_{-,0})=0$ by definition, hence $\mathrm{log}_{\omega}(v_{-,0})$ is well-defined.

\subsection{Main result}
\begin{thm}\label{theorem, main L-value}
Let $\varphi$ be a self-dual Hecke character of an imaginary quadratic field $K$
of infinity type $(1, 0)$. Let $p$ be a prime inert in $K$ so that $p\nmid 6h_{K}\cond{\varphi}$. 
Then there exist non-negative integers $\lambda$ and $\mu$ such that for any sufficiently large integer $n$ with 
$\varepsilon:=W(\varphi)=(-1)^{n-1}$, we have 
\[
v_p\left(\frac{L(\varphi\chi, 1)}{\Omega}\right)=\frac{\lambda}{p^{n-1}(p-1)}+\mu-\frac{n+1}{2}+
\frac{1}{p^{n-1}(p-1)}\left(\frac{1-\varepsilon}{2}+\sum_{k\equiv n-1\!\!\mod \!2 } (p^k-p^{k-1})\right)
\] 
where $\chi$ is an anticyclotomic character 
 of order $p^n$  and 
the index $k$ runs through integers  $1 \leq k \leq n-1$ with the same parity as $n-1$.

Moreover, if $p\nmid \frac{L(\varphi, 1)}{\Omega}$, then 
\[
v_p\left(\frac{L(\varphi\chi, 1)}{\Omega}\right)=-\frac{n+1}{2}+
\frac{1}{p^{n-1}(p-1)}\sum_{k=1}^{\frac{n-1}{2}} (p^{2k}-p^{2k-1})
\] 
for all odd $n$  and any character $\chi$ 
of order $p^n$. 
\end{thm}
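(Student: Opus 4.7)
The plan is to combine Rubin's $p$-adic $L$-function (Theorem \ref{theorem, Rubin-pL}) with the valuation of the generalized Gauss sum (Theorem \ref{theorem, delta valuation}) and a standard Iwasawa-theoretic Weierstrass analysis of $\mathscr{L}_{E}\in\Lambda$.

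First, I would apply Theorem \ref{theorem, Rubin-pL}: for $\chi\in\Xi^{\varepsilon}$ of exact order $p^n$, with $\varepsilon=W(\varphi)=(-1)^{n-1}$,
\[
\mathscr{L}_{E}(\chi)\cdot\delta_{\chi^{-1}}(v_{\varepsilon})=\frac{L(\overline{\varphi\chi},1)}{\Omega}.
\]
Since the family of characters of exact order $p^n$ is stable under $\chi\mapsto\chi^{-1}$, and since the self-duality of $\varphi$ identifies $\overline{\varphi\chi}$ with $\varphi\chi'$ for a suitable $\chi'$ in the same family, this identity controls $v_p(L(\varphi\chi,1)/\Omega)$ uniformly over the family (after re-indexing the character). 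Taking $v_p$ yields
\[
v_p\!\left(\frac{L(\varphi\chi,1)}{\Omega}\right)=v_p(\delta_{\chi^{-1}}(v_{\varepsilon}))+v_p(\mathscr{L}_{E}(\chi)).
\]

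For the first term, I would invoke Theorem \ref{theorem, delta valuation}: since $\chi^{-1}$ has the same order $p^n$ as $\chi$, that theorem delivers
\[
v_{p}(\delta_{\chi^{-1}}(v_{\varepsilon}))=-\frac{n+1}{2}+\frac{1}{p^{n-1}(p-1)}\!\left(\frac{1-\varepsilon}{2}+\sum_{(-1)^{k}=\varepsilon}(p^{k}-p^{k-1})\right),
\]
which is exactly the explicit portion of the target formula. For the second term, I would use the Weierstrass preparation theorem on $\mathscr{L}_{E}\in\Lambda\cong\cO[\![T]\!]$ (where $T=\gamma-1$): write $\mathscr{L}_{E}=p^{\mu}U(T)P(T)$ with $U$ a unit and $P$ a distinguished polynomial of degree $\lambda$. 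For $\chi$ of exact order $p^n$ we have $v_p(\chi(\gamma)-1)=1/(p^{n-1}(p-1))$, which for $n$ sufficiently large is strictly smaller than the positive valuation of every root of $P$. Hence $U(\chi(\gamma)-1)$ is a $p$-adic unit and $v_p(P(\chi(\gamma)-1))=\lambda/(p^{n-1}(p-1))$, so
\[
v_p(\mathscr{L}_{E}(\chi))=\mu+\frac{\lambda}{p^{n-1}(p-1)}.
\]
Adding the two expressions produces the main formula.

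For the \emph{moreover} clause, the hypothesis $p\nmid L(\varphi,1)/\Omega$ forces $\varepsilon=+1$, so only odd $n$ are relevant. Evaluating Theorem \ref{theorem, Rubin-pL} at $\chi=\mathds{1}\in\Xi^{+}$ and controlling the Euler-type factor $\delta_{\mathds{1}}(v_{+})$ shows $\mathscr{L}_{E}(\mathds{1})$ is a $p$-adic unit, which forces $\lambda=\mu=0$. Substituting $\lambda=\mu=0$ into the main formula and observing that, for $\varepsilon=+1$ and $n$ odd, the surviving indices are $k=2j$ with $1\leq j\leq (n-1)/2$, recovers the explicit closed form in the \emph{moreover} clause. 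I expect the main obstacles to be bookkeeping ones: first, making precise the self-duality identification that converts $L(\overline{\varphi\chi},1)$ into an $L(\varphi\chi',1)$ within the same family; and second, verifying that the factor $\delta_{\mathds{1}}(v_{+})$ appearing at the trivial character is a $p$-adic unit so that non-vanishing of $L(\varphi,1)/\Omega$ mod $p$ genuinely propagates to $\lambda=\mu=0$. The substantive analytic input---the valuation of $\delta_{\chi}(v_{\varepsilon})$---has already been secured by Theorem \ref{theorem, delta valuation}, which in turn rests on the local resolvent bound (Theorem \ref{thm, lr}) and the system of local points $c_n^{\pm}$ from \S\ref{section: coates-wiles}.
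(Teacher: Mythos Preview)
Your proposal is correct and follows essentially the same approach as the paper: combine the interpolation formula of Theorem~\ref{theorem, Rubin-pL} with the valuation computation of Theorem~\ref{theorem, delta valuation}, and read off the $\lambda$- and $\mu$-invariants of $\mathscr{L}_{E}\in\Lambda$ via Weierstrass preparation. The paper's proof is in fact a one-line pointer to exactly these two inputs, so your write-up is a faithful (and more detailed) expansion of it; the two bookkeeping points you flag---the passage from $L(\overline{\varphi\chi},1)$ to $L(\varphi\chi,1)$ and the unit property of $\delta_{\mathds 1}(v_{+})$---are genuine but routine, and the paper does not comment on them either.
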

\begin{proof}
This is a simple consequence of Theorem \ref{theorem, Rubin-pL} and Theorem \ref{theorem, delta valuation}.
The integers $\lambda$ and $\mu$ are given as 
the $\lambda$- and $\mu$-invariants of Rubin's $p$-adic $L$-function. 
\end{proof}

\begin{cor}\label{cor, main L-value}
Let $E$ be a CM elliptic curve defined over $\Q$, and 
$\varphi_E$ the associated Hecke character of the CM field $K$. 
Let $p>3$ be a prime inert in $K$. 
Suppose that the root number of $E$ over $\BQ$ is $-1$ and $E$ has good reduction at $p$. 
\begin{itemize}
\item[i)] We have  
\[
v_p\left(\frac{L(\varphi_E\chi, 1)}{\Omega}\right)\geq -\frac{3}{2}+\frac{1}{p-1}
\]
for any anticyclotomic 
character $\chi$ of $K$ of order $p^2$. 
\item[ii)] If the equality holds  in i) for some $\chi$  of order $p^2$, 
then 
$$\mathrm{ord}_{s=1} L(E_{/\Q}, s)=1.$$ 
In particular, the Tate-Shafarevich group of $E_{/\Q}$ is finite and the Mordell-Weil rank of $E(\Q)$ is $1$.  
\item[iii)] Conversely, suppose that  $\mathrm{ord}_{s=1} L(E_{/\Q}, s)=1$. 
Suppose also that $E(\Q)$ is dense in $E(\Q_p)\otimes_{\BZ}\BZ_{(p)}$ and 
$$\frac{L'(E_{/\Q}, 1)}{\Omega \cdot \mathrm{Reg}_E}$$ is a $p$-adic unit. Then the equality holds in i). 
In fact   
(\ref{equation, intro main}) holds with $\lambda=\mu=0$  
for all non-trivial $\chi$ of even $p$-power order.
\end{itemize}

\end{cor}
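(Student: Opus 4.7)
\smallskip\noindent\textbf{Proof plan.}
The plan is to leverage Rubin's $p$-adic $L$-function $\mathscr{L}_E$ from Theorem~\ref{theorem, Rubin-pL} together with the two valuation results proved earlier. Throughout, set $\varepsilon:=W(\varphi_E)=W(E_{/\BQ})=-1$.

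For (i), take $n=2$ so that $(-1)^{n-1}=\varepsilon$. Weierstrass preparation applied to $\mathscr{L}_E\in\Lambda$ gives $v_p(\mathscr{L}_E(\chi))\geq\mu+\lambda/(p(p-1))$ for any $\chi$ of order $p^2$, while Theorem~\ref{theorem, delta valuation} yields $v_p(\delta_{\chi^{-1}}(v_\varepsilon))=-\tfrac{3}{2}+\tfrac{1}{p-1}$; combining via the interpolation formula of Theorem~\ref{theorem, Rubin-pL} produces the claimed lower bound. For (ii), equality in (i) forces $\mu=\lambda=0$, so $\mathscr{L}_E$ is a unit in the local ring $\Lambda$. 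In particular $\mathscr{L}_E(\mathds{1})$ is a $p$-adic unit, hence non-zero, and Theorem~\ref{ThmA}(a) then forces $\log_\omega(P)\neq 0$, so $P\in E(\BQ)$ is non-torsion. Theorem~\ref{ThmA}(b) delivers $\mathrm{ord}_{s=1}L(E_{/\BQ},s)=1$, and finiteness of $\Sha$ together with $\mathrm{rk}_{\BZ}E(\BQ)=1$ are the classical analytic-rank-one consequences for CM elliptic curves via Rubin's Euler system.

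For (iii), the analytic rank one hypothesis together with Rubin's theorem gives $\mathrm{rk}_{\BZ}E(\BQ)=1$; write $E(\BQ)/\mathrm{tors}=\BZ P_0$ and $P=mP_0$ for a positive integer $m$. Substituting $\log_\omega(P)=m\log_\omega(P_0)$ and $\langle P,P\rangle_\infty=m^2\mathrm{Reg}_E$ into Theorem~\ref{ThmA}(a), (c) yields
\begin{equation*}
\mathscr{L}_E(\mathds{1})=\left(1+\tfrac{1}{p}\right)\cdot\frac{\log_\omega(P_0)^2}{\log_\omega(v_{-,0})}\cdot\frac{L'(E_{/\BQ},1)}{\Omega\cdot\mathrm{Reg}_E}.
\end{equation*}
The last factor is a $p$-adic unit by hypothesis; $v_p(1+1/p)=-1$; the density condition $P_0\notin pE(\BQ_p)\otimes\BZ_{(p)}$ translates (using supersingular reduction and $|E(\BF_p)|=p+1$ coprime to $p$) to $v_p(\log_\omega(P_0))=1$; and the local-theoretic fact $v_p(\log_\omega(v_{-,0}))=1$, arising from Proposition~\ref{local-rational-points} at $n=0$ via the identification of the relevant generator of $\mathscr{F}(\Phi)$ with a uniformizer, combine to $v_p(\mathscr{L}_E(\mathds{1}))=0$. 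Therefore $\mathscr{L}_E$ is a unit of $\Lambda$, forcing $\lambda=\mu=0$. Inserting $\lambda=\mu=0$ into Theorem~\ref{theorem, main L-value} recovers the equality in (i), and more generally the stated formula for every non-trivial $\chi$ of even $p$-power order (the formula holds at every even $n\geq 2$, not merely for sufficiently large $n$, precisely because $\mathscr{L}_E$ is a unit and so $v_p(\mathscr{L}_E(\chi))=0$ at every finite-order $\chi$).

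The principal obstacle is the local computation $v_p(\log_\omega(v_{-,0}))=1$ in (iii), which rests on tracking $v_{-,0}$ through Rubin's decomposition $V_\infty^*=V_\infty^{*,+}\oplus V_\infty^{*,-}$ and the Coleman/dual-exponential framework of Section~\ref{section: coates-wiles}; once this is in hand, the valuation bookkeeping in Theorem~\ref{ThmA}'s formula proceeds mechanically.
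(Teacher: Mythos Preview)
Your proposal is correct and follows essentially the same route as the paper's proof, which is itself extremely terse (two sentences: the inequality comes from the proof of Theorem~\ref{theorem, main L-value}, and equality forces $\mathscr{L}_E$ to be a unit in $\Lambda$, whence everything follows from Theorem~\ref{ThmA}). Your elaboration of part~(iii) via an explicit valuation count in the formula of Theorem~\ref{ThmA} is exactly what the paper leaves implicit; the ``principal obstacle'' you flag, namely $v_p(\log_\omega(v_{-,0}))=1$, is likewise not justified in the paper's proof and ultimately rests on the constructions in~\cite{BKOb} (note that Proposition~\ref{local-rational-points} concerns $c_n^{\pm}$ rather than $v_{\pm,n}$ directly, so your appeal to it is slightly indirect---the link goes through the duality \eqref{char-of-perp} and the identification of $H^1_f(\Phi,T^{\otimes -1}(1))$ with $\mathscr{F}(\Phi)$).
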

\begin{proof}
The inequality directly follows from the proof of Theorem \ref{theorem, main L-value}. 
 The equality would imply that $\mathscr{L}_{E}(\chi)$ is a $p$-adic unit, and so
 the $p$-adic $L$-function $\mathscr{L}_{E}$ would itself be a unit of the Iwasawa algebra. 
 Hence the assertion follows from Theorem \ref{ThmA}.
\end{proof}


\begin{thebibliography}{XXXX} 




\bibitem{AH0} {A. Agboola} and B. Howard, \emph{Anticyclotomic Iwasawa theory of CM elliptic curves. II}, Math. Res. Lett. 12 (2005), no. 5-6, 611-621.










\bibitem{Br} J. Brinkhuis, \emph{On a comparison of Gauss sums with products of Lagrange resolvents}, Compositio Math. 93 (1994), no. 2, 155-170.

\bibitem{BBL1} A. Burungale, K. B\"uy\"ukboduk and A. Lei, {\it Anticyclotomic Iwasawa theory of $\GL_2$-type abelian varieties at non-ordinary primes}, Adv. Math. 439 (2024), Paper No. 109465, 63 pp. 



\bibitem{BF} A. Burungale and M. Flach, {\it The conjecture of Birch and Swinnerton-Dyer for certain elliptic curves with complex multiplication}, Camb. J. Math. 12 (2024), no. 2, 357--415.



\bibitem{BHKO} A. Burungale, W. He, S. Kobayashi and K. Ota, {\it Hecke $L$-values, definite Shimura sets and mod $\ell$ non-vanishing}, preprint, arXiv:2408.13932. 

\bibitem{BKO} A. Burungale, S. Kobayashi and K. Ota,
\textit{Rubin's conjecture on local units in the anticyclotomic tower at inert primes,}
Ann. of Math. (2) 194 (2021), no. 3, 943-966.



\bibitem{BKOb}A. Burungale, S. Kobayashi and K. Ota,
\textit{$p$-adic $L$-functions and rational points on CM elliptic curves at inert primes}, J. Inst. Math. Jussieu 23 (2024), no. 3, 1417--1460.

\bibitem{BKOs} A. Burungale, S. Kobayashi and K. Ota, {\it On the Tate-Shafarevich groups of CM elliptic curves over anticyclotomic $\Z_p$-extensions at inert primes}, 
Elliptic curves and modular forms in arithmetic geometry --Bertolini's 60th birthday conference proceedings, to appear.

\bibitem{BKOY} A. Burungale, S. Kobayashi, K. Ota and S. Yasuda, 
\emph{Kato's epsilon conjecture for anticyclotomic CM deformations at inert primes}, 
J. Number Theory 270 (2025), 17--67.  





 
\bibitem{CG} J. Coates and R. Greenberg, \emph{Kummer theory for abelian varieties over local fields}, 
Invent. Math. 124 (1996), no. 1-3, 129--174. 
 


\bibitem{Fin} T. Finis, \emph{Divisibility of anticyclotomic L-functions and theta functions with complex multiplication}, Ann. of Math. (2) 163 (2006), no. 3, 767-807.

\bibitem{Fin1} T. Finis, \emph{The $\mu$-invariant of anticyclotomic $L$-functions of imaginary quadratic fields}, J. Reine Angew. Math. 596 (2006), 131-152.

\bibitem{Fro} A. Fr\"ohlich, \emph{Galois module structure of algebraic integers}, Ergebnisse der Mathematik und ihrer Grenzgebiete (3) [Results in Mathematics and Related Areas (3)], 1. Springer-Verlag, Berlin, 1983. x+262 pp. 


\bibitem{FT} A. Fr\"ohlich and M. Taylor, \emph{The arithmetic theory of local Galois Gauss sums for tame characters}, Philos. Trans. Roy. Soc. London Ser. A 298 (1980/81), no. 1437, 141-181.

\bibitem{Gr} R. Greenberg, \emph{On the critical values of Hecke L-functions for imaginary quadratic fields}, 
Invent. Math. 79 (1985), no. 1, 79-94.




\bibitem{Gr80} B. Gross, \emph{Arithmetic on elliptic curves with complex multiplication}, With an appendix by B. Mazur. Lecture Notes in Mathematics, 776. Springer, Berlin, 1980. iii+95 pp. 


\bibitem{Ha77}
M. Hazewinkel, \emph{On norm maps for one dimensional formal groups. III.}, Duke Math. J. 44 (1977), no. 2, 305-314.



\bibitem{Hi} H. Hida, \emph{Non-vanishing modulo $p$ of Hecke $L$-values}, Geometric aspects of Dwork theory. Vol. I, II, 735-784, Walter de Gruyter, Berlin, 2004.

\bibitem{Ho} T. Honda, \emph{On the theory of commutative formal groups},  J. Math. Soc. Japan 22 (1970), 213-246.



\bibitem{K93} K. Kato,
\textit{Lectures on the approach to {I}wasawa theory for
              {H}asse-{W}eil {$L$}-functions via {$B_{\rm dR}$}. {I}},
              Arithmetic algebraic geometry ({T}rento, 1991),
              Lecture Notes in Math. 1553,
              50-163,
              Springer, Berlin, 1993.
\bibitem{Ka} N. Katz, \emph{$p$-adic interpolation of real analytic Eisenstein series},  Ann. of Math. (2) 104 (1976), no. 3, 459-571.



\bibitem{Ko0} S. Kobayashi, \emph{Iwasawa theory for elliptic curves at supersingular primes}, Invent. Math. 152 (2003), no. 1, 1-36. 





 
 \bibitem{Maus} 
 E. Maus, 
\emph{On the jumps in the series of ramifications groups}, 
Colloque de Th\'eorie des Nombres (Univ. Bordeaux, Bordeaux, 1969), pp. 127-133. 
Bull. Soc. Math. France, M\'em. 25, Soc. Math. France, Paris, 1971. 




\bibitem{Nasy}
A. G. Nasybullin, \emph{Elliptic curves with supersingular reduction over $\Gamma$-extensions} (Russian),  
Uspehi Mat. Nauk 32 (1977), no. 2(194), 221--222.


\bibitem{Po} R. Pollack, 
\emph{On the $p$-adic $L$-function of a modular form at a supersingular prime}, Duke Math. J. 118 (2003), no. 3, 523-558. 


 
 \bibitem{PRbook} B. Perrin-Riou,
 \emph{Fonctions $L$ $p$-adiques des repr\'esentations $p$-adiques}, 
 Ast\'erisque No. 229 (1995), 198 pp. 
 

\bibitem{Ru}K. Rubin, \emph{Local units, elliptic units, Heegner points and elliptic curves}, 
Invent. Math. 88 (1987), no. 2, 405-422.



\bibitem{Ru1} K. Rubin, \emph{$p$-adic L-functions and rational points on elliptic curves with complex multiplication}, Invent. Math. 107 (1992), no. 2, 323--350.





\bibitem{Se} J.-P. Serre, \emph{Local fields}, Translated from the French by Marvin Jay Greenberg. Graduate Texts in Mathematics, 67. Springer-Verlag, New York-Berlin, 1979. viii+241 pp.




 

 

 
 \bibitem{Ta} J. Tate, \emph{$p$-divisible groups}, 1967 Proc. Conf. Local Fields (Driebergen, 1966) pp. 158-183 Springer, Berlin.
 
  \bibitem{Wi} A. Wiles, \emph{Higher explicit reciprocity laws}, Ann. of Math. (2) 107 (1978), no. 2, 
  235-254.
   \bibitem{Wy}
 B. Wyman, 
 \emph{Wildly ramified gamma extensions}, Amer. J. Math. 91 (1969), 135-152.













 
  
     



\end{thebibliography}
\end{document}